\def\N{{\mathcal{N}}}
\def\ep{\varepsilon}
\def\O{\Omega}
\def\di{\displaystyle}
\def\R{{\mathbb {R}}}
\def\A{{\mathcal{A}}}
\def\lp{L^{p(\cdot)}(\Omega)}
\def\lq{L^{q(\cdot)}(\Omega)}
\def\lpe{L^{p^*(\cdot)}(\Omega)}
\def\wp{W^{1,p(\cdot)}(\Omega)}
\def\wpgama{{W_{\Gamma_D}^{1,p(\cdot)}(\Omega)}}
\def\lqb{L^{q(\cdot)}(\partial\Omega)}
\def\lpbg{L^{p(\cdot)}(\Gamma_D)}
\def\esssup{\text{esssup}}
\def\essinf{\text{essinf}}
\def\th{\mathcal{T}_h}
\def\thp{\mathcal{T}_{h'}}
\def\eh{\mathcal{E}_h}
\def\nh{\mathcal{N}_h}
\def\wpt{W^{1,p(\cdot)}(\th)}
\def\gi{\Gamma_{int}}
\def\lpi{L^{p(\cdot)}(\Gamma_{int})}
\def\lpd{L^{p(\cdot)}(\Gamma_{D})}
\def\lpk{L^{p(\cdot)}(\kappa)}
\def\[{[\hspace{-0.05cm}[}
\def\]{]\hspace{-0.05cm}]}
\newtheorem{teo}{Theorem}[section]
\newtheorem{lema}[teo]{Lemma}
\newtheorem{prop}[teo]{Proposition}
\newtheorem{corol}[teo]{Corollary}
\newtheorem{hyp}[teo]{Hypothesis}
\theoremstyle{definition}
\newtheorem{defi}[teo]{Definition}
\theoremstyle{remark}
\newtheorem{remark}[teo]{Remark}
\newtheorem*{ack}{Acknowledgements}
\def\ess{\mathop{\mbox{\normalfont ess}}\nolimits}
\title{ Interior penalty discontinuous Galerkin FEM for the $p(x)$-Laplacian.}
\author[L. M. Del Pezzo]{Leandro M. Del Pezzo} \address{Departamento de Matem\'atica,
Facultad de Ciencias Exactas y Naturales, Universidad de Buenos
Aires, 1428 Buenos Aires, Argentina.} \email{ldpezzo@dm.uba.ar}
\author[A. L. Lombardi]{Ariel L. Lombardi} \address{Instituto de Ciencias, Universidad Nacional de
General Sarmiento, J.M. Gutierrez 1150, B1613GSX Los Polvorines,
Provincia de Buenos Aires, and Departamento de Matem\'atica,
Facultad de Ciencias Exactas y Naturales, Universidad de Buenos
Aires, 1428 Buenos Aires, Argentina.} \email{aldoc7@dm.uba.ar} 
\author[S. Mart\'\i nez]{Sandra Mart\'{i}nez} \address{Departamento de Matem\'atica,
Facultad de Ciencias Exactas y Naturales, Universidad de Buenos
Aires, 1428 Buenos Aires, Argentina.} \email{smartin@dm.uba.ar}
\thanks{Supported by ANPCyT under grants PICT 2006-290 and PICT 2010-1675, and by Universidad de Buenos Aires under grants X070, X078 and X117 . All three authors are members of CONICET Argentina}
\begin{document}

\keywords{variable exponent spaces, minimization, discontinuous
Galerkin.}
\subjclass[2010]{65k10,
35J20, 65N30 and 46E35}

\maketitle
\begin{abstract}
In this paper we construct an ``Interior Penalty" Discontinuous
Galerkin method to approximate the minimizer of a variational
problem related to the $p(x)-$Laplacian. The function $p:\O\to
[p_1,p_2]$ is log H\"{o}lder continuous and $1<p_1\leq
p_2<\infty$. We prove that the minimizers of the discrete
functional converge to the solution. We also make some numerical
experiments in dimension one to compare this method with the
Conforming Galerkin Method, in the case where $p_1$ is close to
one. This example is motivated by its applications to image
processing.
\end{abstract}

\section{Introduction}

In this paper we study a discontinuous Galerking method to
approximate the minimizer of a non homogeneous functional. This
functional is related to the so-called  $p(x)-$Laplacian operator,
i.e.
\begin{equation}\label{px}
\Delta_{p(x)}u=\mbox{div}(|\nabla u(x)|^{p(x)-2}\nabla u).
\end{equation}

This operator extends the classical Laplacian ($p(x)\equiv 2$) and
the so-called $p-$Laplacian ($p(x)\equiv p$ with $1<p<\infty$) and
it has been recently used in image processing and in the modeling
of electrorheological fluids, see \cite{BCE, CLR, R} .

\medskip
In an image processing problem, the aim is to recover the real
image $I$ from an observed image $\xi$ of the form $\xi=I+\eta$,
where $\eta$ is a noise.

Approaches to image denoising have been developed along three
main lines: wavelet methods, stochastic methods and variational
methods, see references in \cite{BCE}. One variational approach
that has attracted a great deal of attention is the total
variation method of L. Rudin,  S. Osher and E. Fetami \cite{ROF}.
The variational problem is

\medskip
Minimize the functional $|Du|(\O)$ over all the functions in
$BV(\O)\cap L^2(\O)$ such that
$$
\int_{\O} u\, dx =\int_{\O} \xi\, dx\quad \mbox{ and }\quad
\int_{\O} |u-\xi|^2\, dx =\sigma^2$$ for some $\sigma>0$.

 \medskip
The conditions on the space come from the assumption that $\xi$ is
a function that represents a white noise with mean zero and
variance $\sigma$. Moreover, the authors prove that this problem
is equivalent to minimizing
$$|Du|(\O)+\frac{\lambda}{2} \int_{\O}|u-\xi|^2\, dx$$
for some nonnegative Lagrange multiplier
$\lambda=\lambda(\sigma,\xi)$. This model works when the image is
piecewise constant, but in some cases can cause a {\it
staircasing} effect. See for instance \cite{CL}.

An older approach consists in minimizing,
$$\int_{\O}|\nabla u|^2+\frac{\lambda}{2} \int_{\O}|u-\xi|^2\, dx.$$
This method solves the {\it staircasing} effect, but it has the
problem that it does not preserve edges.

\medskip
 In \cite{CLR}, the authors introduce a model that involves the $p(x)-$Laplacian, for some function  $p:\O\to [p_1,2]$, with $p_1>1$.
 This function encodes the information on the regiones where the gradient is sufficiently large (at edges) and where the gradient is close to zero (in homogeneous regions).
In this manner, the model avoids the {\it staircasing} effect
still preserving the edges.

Recently, in \cite{BCE}  the authors propose a variant of the
method of Chen, Levine and Rao \cite{CLR}. More precisely, they
consider the functional
$$\int_{\O}|\nabla u|^{p(x)}+\frac{\lambda}{2} \int_{\O}|u-\xi|^2\, dx,$$
with $p:\O\to [1,2]$ a function such that
 $p(x)=P_M(|\nabla G_{\delta}*\xi|(x)),$ where $G_{\delta}(x)$ is an approximation of the identity, $M>>1$ and $P_M$ is a function that satisfies
$P_M(0)=2$ and $P_M(x)=1$ for all $|x|>M$. Observe that, since
$p(x)=1$ for some values of $x$,  the authors have to rewrite the
functional in a form that
 allow for computation of weak derivatives.

\bigskip

Motivated by the above mentioned applications, we  study a
numerical method to approximate minimizers of a functional related
to the  $p(x)-$ Laplacian.

We work in the following setting:

\medskip
Let $\Omega$ be a bounded Lipschitz domain. For functions
$p,s,t$ the following conditions will be assumed when necessary,
\begin{itemize}
\item[(H1)] $p:\overline{\O}\to [p_1,p_2]$ ($1<p_1\leq p_2
<\infty$)  is $\log$-H\"{o}lder continuous. That is, there exists
a constant $C_{log}$ such that
$$|p(x) - p(y)| \leq \frac{C_{log}}{\log\,\left(e+\frac{1}{|x - y|}\right)}\quad \forall\, x,y\in\O;$$
\item[(H2)]  $s\in L^{\infty}(\O),$ with $1\leq s(x)< p^*(x)-\ep$
for some $\ep>0;$ \item[(H3)]  $t\in C^0(\partial\O)$ with $1\leq
t(x)< p_*(x).$
\end{itemize}
Here, $p^*$ and $p_*$ are the Sobolev critical exponents for these
spaces, i.e.
\begin{equation}\label{criticos}
p^*(x):=
\begin{cases}
\frac{p(x)N}{N-p(x)} & \mbox{if } p(x)<N,\\
+\infty & \mbox{if } p(x)\ge N,
\end{cases}
\quad \mbox{ and }\quad p_*(x):=
\begin{cases}
\frac{p(x)(N-1)}{N-p(x)} & \mbox{if } p(x)<N,\\
+\infty & \mbox{if } p(x)\ge N.
\end{cases}
\end{equation}

\medskip

Given  $p,q,r$ satisfying (H1),(H2) and (H3) respectively and $\xi\in L^{q(\cdot)}(\O)$, we want to minimize the
functional
$$
I(v)=\int_{\O} \left(|\nabla
v(x)|^{p(x)}+|v(x)-\xi(x)|^{q(x)}\right)\, dx+\int_{\Gamma_N}
|v|^{r(x)}\, dS
$$
over all $v\in\A$, where
$$\A=\{v\in \wp: v-u_D \in \wpgama\},$$ $u_D\in \wp$ and $\partial\O=\Gamma_D\cup \Gamma_N$.
For the definitions of the variable exponent Sobolev spaces $\wp$
and $\wpgama,$ see Section \ref{appA1}.

\medskip

Let us observe that, considering  the applications we have in
mind, it is relevant to study the minimization problem in the case
where $p$ approaches the value $1$ in some regions. We can see, by
making some numerical experiments, that the minimizers have large
derivative in these regions. For this reason, the Conforming
Finite Element Method is not appropriate since, its use would
imply the need of fine meshes in order to obtain good
approximations, see Section \ref{examples}.

We  consider the so-called Discontinuous Galerkin Methods. These
methods are relatively new from the theoretical point of view. In
\cite{ABCM}, we can find a unification of all methods of this
type. In all the examples of that paper, the authors take as
model  a linear differential equation.

At this point we want to mention that, in \cite{BCE} and
\cite{CLR}  the authors find an approximation of the solutions by
using an explicit finite difference scheme for the associated
parabolic problem.

Our aim is to study, in the future, the minimization problem in
the case where $p$ approaches the value $1$ in some regions (where
there is no weak formulation). For this reason, we think that the
best way to find approximations is by finding a good
discretization of the minimization problem. We take a
discretization  similar to the one in \cite{BO} where the authors
study a functional that includes the case  $p=$ constant.

Our discrete functional is the following
\begin{align*}I_h(v_h)&=\int_{\O} \left(|\nabla v_h+R_h(v_h)|^{p(x)}+|v_h-\xi|^{q(x)}\right)\, dx
+\int_{\Gamma_D}|v_h-u_D|^{p(x)}{\bf{h}}^{1-p(x)}\,dS\\&
+\int_{\Gamma_{int}}|\[v_h\]|^{p(x)}{\bf{h}}^{1-p(x)}\, dS
+\int_{\Gamma_N} |v_h|^{r(x)}\, dS,
\end{align*}
where ${\bf{h}}$ is the local mesh size, $h$ is the global mesh
size, $\Gamma_{int}$ is the union of the interior edges of the
elements, $\[v_h\]$ is the jump of the function between two  edges
and $\nabla v_h$ denotes the elementwise gradient of $v_h,$ see
Section \ref{appA2} for a precise definition. Observe that the
boundary condition is weakly imposed by the second  term of the
functional. Lastly, $R_h$ is the lifting operator defined in
Section \ref{lifting}, which represents the contributions of the
jumps to the distributional gradient.

In the case $p=$ constant,  the boundedness of this operator is
proved by using an inf-sup condition. In our case, for technical
reasons, we use a different approach, which consists of finding a
local characterization of the operator.

With this setting the discrete problem is to find a minimizer
$u_h$ of $I_h$ over the space $S^k(\th)$ of all the functions that
are polynomials of degree at most $k$ in each element, with $k\ge
1,$ see Section \ref{appA2}.

In this work we show that the sequence $u_h$ converges to the
minimizer $u$ of $I$ over the space $\A$. We want to remark here
that we are assuming  that, for each $v_h\in  S^k(\th)$, all the
terms of $I_h(v_h)$ can be exactly computed.

In fact, we prove the following

\begin{teo}\label{conv}
 Let  $\O$ be a polyhedral domain.
 Let $p(x),q(x),$ and $r(x)$ be functions satisfying (H1), (H2) and (H3) respectively and let $u_D\in W^{2,p_2}(\O)$.
 For each $h\in (0,1]$, let $u_h\in S^k(\th)$ be the minimizer of ${I}_h$.
 If $u$ is the minimizer of ${I}$ then
\begin{align}
    \label{lr}u_{h}&\to u \mbox{ strongly in } L^{s(\cdot)}(\O) \quad \forall s \mbox{ satisfying (H2)}, \\
     \label{lr1}u_{h}&\to u \mbox{ strongly  in } L^{t(\cdot)}(\partial\O) \quad
     \forall t \mbox{ satisfying (H3)},\\
 \label{convIh} I_h(u_{h})&\rightarrow  I(u),\\
 \label{Ruh} R(u_h)&\to 0 \mbox{ strongly in } \lp,\\
 \label{convpena}
 \int_{\Gamma_D}|u_h-u_D|^{p(x)}{\bf{h}}^{1-p(x)}\,dS &
+\int_{\Gamma_{int}}|\[u_h\]|^{p(x)}{\bf{h}}^{1-p(x)}\, dS \rightarrow  0,\\
\label{fuerte}\nabla u_{h}&\to \nabla u \mbox{ strongly in } \lp.
 \end{align}
\end{teo}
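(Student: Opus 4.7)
The strategy is the Direct Method of the calculus of variations, adapted to the broken DG setting. I would argue in four parts: a consistency upper bound, weak compactness, identification of the weak limit via lower semicontinuity, and an upgrade to strong convergence via uniform convexity of the variable-exponent modular.

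For the \emph{limsup inequality}, I would use the discrete minimality of $u_h$ to compare $I_h(u_h)$ with $I_h(v_h)$ for a recovery sequence $\{v_h\}\subset S^k(\th)$ obtained as a Scott--Zhang-type quasi-interpolant (preserving the trace on $\Gamma_D$ elementwise) of a smooth function $w_\delta$ approximating $u$ in $\wp$ with $w_\delta - u_D \in \wpgama$ (density under (H1) is standard). For such $v_h$ the interior jumps and the $\Gamma_D$ mismatch are of optimal order, so $R_h(v_h)\to 0$ in $\lp$ via the local characterization of the lifting announced in the introduction; together with variable-exponent interpolation estimates and the log-H\"older modulus of $p$, this yields $I_h(v_h)\to I(w_\delta)$, hence $\limsup_h I_h(u_h)\le I(u)$ after $\delta\to 0$. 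For \emph{compactness}, the energy bound controls the modulars of $\nabla u_h + R_h(u_h)$, the two penalty terms, $|u_h-\xi|^{q(x)}$ and $|u_h|^{r(x)}$ on $\Gamma_N$; a discrete Poincar\'e--Friedrichs inequality in the broken space $\wpt$ (using the $\Gamma_D$ penalty to handle the trace) then bounds $u_h$, so along a subsequence $u_h\to u^*$ strongly in $L^{s(\cdot)}(\O)$ and in $L^{t(\cdot)}(\partial\O)$ (compact embeddings under (H2), (H3)), while $\nabla u_h + R_h(u_h) \rightharpoonup G$ weakly in $\lp$.

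For \emph{liminf and identification}, the jump-penalty bound combined with the control of $R_h$ by the modular of the jumps forces $R_h(u_h)\to 0$ in $\lp$ and $\[u_h\]\to 0$ in a broken-trace sense; from these two facts one deduces $u^*\in\wp$ with $\nabla u^* = G$ and $u^*\in\A$. Weak lower semicontinuity of the variable-exponent modulars yields $I(u^*)\le \liminf_h I_h(u_h)$; combined with the limsup bound and the uniqueness of the minimizer of $I$ (from strict convexity of the integrand for $p_1>1$), one obtains $u^*=u$ and $I_h(u_h)\to I(u)$, which is (\ref{convIh}); by the Urysohn subsequence principle the whole sequence converges. For \emph{strong convergence}, each of the five nonnegative pieces of $I_h(u_h)$ is separately lower semicontinuous along the extracted subsequence with $\liminf$ at least the corresponding piece of $I(u)$, and since their sum converges to $I(u)$, all five converge individually; this yields (\ref{convpena}), (\ref{lr}), (\ref{lr1}) (upgrading from modular to norm via the Nemytskii continuity of $|\cdot|^{q(x)}$ and $|\cdot|^{r(x)}$) and (\ref{Ruh}). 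Finally, (\ref{fuerte}) follows from convergence of the gradient modular $\int|\nabla u_h + R_h(u_h)|^{p(x)}\,dx\to \int|\nabla u|^{p(x)}\,dx$ together with $\nabla u_h + R_h(u_h)\rightharpoonup \nabla u$ via a Clarkson-type uniform convexity inequality valid for $1<p_1\le p(x)\le p_2<\infty$, after subtracting $R_h(u_h)\to 0$.

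The delicate point is the identification: that $G=\nabla u^*$ and $u^*\in \wp$, i.e.~that the weak limit of the broken gradient plus lifting is the genuine distributional gradient of a Sobolev function. In the constant-exponent theory this is classical via an inf-sup argument for $R_h$; here the inf-sup route is unavailable and one must estimate $R_h(u_h)$ in $\lp$ directly in terms of the modular of the jumps, which is exactly where the local characterization of the lifting operator and a careful treatment of the oscillation of $p(x)$ across element interfaces become essential.
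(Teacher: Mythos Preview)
Your overall architecture is right and matches the paper's, but there is a genuine gap at the identification step. You write that ``the jump-penalty bound combined with the control of $R_h$ by the modular of the jumps forces $R_h(u_h)\to 0$ in $\lp$,'' and then use this to conclude $G=\nabla u^*$. This is circular. From the mere energy bound $I_h(u_h)\le C$ you only get that the weighted jump modular $\int_{\Gamma_{int}}|\[u_h\]|^{p(x)}{\bf h}^{1-p(x)}\,dS$ is \emph{bounded}, and Lemma~\ref{lifcont} then gives only that $R_h(u_h)$ is \emph{bounded} in $\lp$, not that it vanishes. The vanishing of the penalty (and hence of $R_h(u_h)$) is a \emph{consequence} of $I_h(u_h)\to I(u)$, which in turn requires first identifying the weak limit as the true minimizer; you cannot invoke it beforehand.

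The paper resolves this by importing the identification from Buffa--Ortner (Theorem~5.2 in \cite{BO}, applied with the constant exponent $p_1$): using only boundedness of $R_h(u_h)$ in $\lp$ and of $|u_h|_{\wpt}$, one shows directly that $\nabla u_h + R_h(u_h)\rightharpoonup \nabla u$ weakly in $\lp$ for some $u\in\wp$, essentially because $\nabla u_h + R_h(u_h)$ agrees with the distributional gradient of $u_h$ when tested against piecewise-polynomial test functions, and the error in replacing a smooth test function by its projection is controlled by the bounded jump modular. Only \emph{after} the liminf--limsup sandwich gives $I_h(u_h)\to I(u)$ does the paper deduce that the penalty term tends to zero, hence $R_h(u_h)\to 0$, and then upgrade to strong convergence of the gradients via Proposition~\ref{debil}(3). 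You should reorder your argument accordingly: identify the limit first (via the Buffa--Ortner mechanism, not via $R_h\to 0$), and only then extract (\ref{convpena}) and (\ref{Ruh}) from the convergence of energies.
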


\bigskip
Lastly, we want to mention the places where we  need  the
regularity  hypotheses on the function $p$. First, in order to
prove Theorem \ref{conv} we need to use the continuity of the
embedding $\wp\hookrightarrow\lpe,$ the continuity of the Trace
operator $\wp\hookrightarrow\lqb$ and the Poincar\'e inequality.
As we can see in Theorem \ref{embed}, Theorem  \ref{traza} and
Theorem \ref{poinc2} for these results we need $p$ to be
$\log$-H\"{o}lder and $r\in C^0({\partial\O})$.

We also use strongly that $p$ is $\log$-H\"{o}lder in Proposition
\ref{papa}. This result says that if $\kappa$ is an element with
diameter $h_{\kappa}$ and $p^{\kappa}_+$ and $p^{\kappa}_-$ are
respectively the maximum and minimum of $p$ over $\kappa$ then
$h_{\kappa}^{p^{\kappa}_--p^{\kappa}_+}$ is bounded independent of
$h_{\kappa}$. This property is crucial in the proof of several
results along the paper.

\medskip
 On the other hand, in order to prove the convergence of the sequence $u_h$
 we  need a technical hypothesis on the boundary condition $u_D$. In fact, Lemma \ref{auxiliar}  only covers the case where $u_D\in W^{2,p_2}(\O)$.

\subsection*{Outline of the paper}

\medskip

In Section \ref{appA1} we state several properties of the Variable
Exponent Sobolev Spaces.

In Section \ref{appA2} we give some definitions and properties
related to the mesh and to the Broken Sobolev Spaces.

In Section \ref{qh} we study the Reconstruction operator and we
prove some error estimates that are crucial for the rest of the
paper (Corollary \ref{globalthfin}).

In Section \ref{lifop} we prove the boundedness of the Lifting
operator (Theorem \ref{lifcont}).

In Section \ref{cm2} we prove the Broken Poincar\'{e} inequality
(Theorem \ref{brok-poncare}), the   coercivity of the functional
(Theorem \ref{cm}) and finally we give the proof of Theorem
\ref{conv}.

In Section \ref{confor} we study the convergence of the Conforming
Finite Element Method.

In Section \ref{examples} we give a 1d example  and compare both
conforming and non-conforming schemes.

\section{Preliminaries: The spaces $L^{p(\cdot)}(\Omega)$ and $W^{1,p(\cdot)}(\Omega)$}
\label{appA1}

We now introduce the spaces  $L^{p(\cdot)}(\Omega)$ and
$W^{1,p(\cdot)}(\Omega)$ and state some of their properties.

\medskip

Let $p \colon\Omega \to  [p_1,p_2]$ be a measurable bounded
function, called a variable exponent on $\Omega$ where
 $p_{1}:= ess inf \,p(x)$ and $p_{2} := ess sup \,p(x)$ with $1\le p_1\leq p_2<\infty$.

We define the variable exponent Lebesgue space
$L^{p(\cdot)}(\Omega)$ to consist of all measurable functions $u
\colon\Omega \to \R$ for which the modular
$$
\varrho_{p(\cdot)}(u) := \int_{\Omega} |u(x)|^{p(x)}\, dx
$$
is finite. We define the Luxemburg norm on this space by
$$
\|u\|_{L^{p(\cdot)}(\Omega)} = \|u\|_{p(\cdot)}  := \inf\{k >
0\colon \varrho_{p(\cdot)}(u/k)\leq 1 \}.
$$
This norm makes $L^{p(\cdot)}(\Omega)$ a Banach space.

\bigskip
The following properties can be obtained directly from the
definition of the norm,
\begin{prop}\label{equi}
If $u,u_n\in L^{p(\cdot)}(\Omega)$, $\|u\|_{p(\cdot)}=\lambda,$
then
\begin{enumerate}
\item $\lambda<1$ $(=1, >1)$ if only if
$\di\int_{\O}|u(x)|^{p(x)}\, dx<1\ (=1,>1);$ \item If $\lambda\geq
1,$ then $\di\lambda^{p_1}\le \int_{\O} |u(x)|^{p(x)}\, dx\leq
\lambda^{p_2};$ \item If $\lambda\leq 1,$ then
$\di\lambda^{p_2}\le \int_{\O} |u(x)|^{p(x)}\, dx\leq
\lambda^{p_1};$ \item $\di\int_{\O}|u_n(x)|^{p(x)}\, dx \to 0$ if
only if $\di\|u_n\|_{p(\cdot)}\to 0;$ \item
$\|1\|_{p(\cdot)}\le\max\left\{|\Omega|^{\frac{1}{p_1}},
     |\Omega|^{\frac{1}{p_2}}\right\};$
\item If $\di\Omega= \di\bigcup_{i=1}^m \O_i$ where $\O_i\subset
\O$ are open sets then there exists a constant $C>0$ depending on
$m$ such that
     $$\di\|u\|_{\lp}\leq C \sum_{i=1}^m \|u\|_{L^{p(\cdot)}(\O_i)}.$$
\end{enumerate}
\end{prop}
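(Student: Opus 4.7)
The plan is to build everything on a single scaling observation. Fix $\lambda>0$ and note that
$$\varrho_{p(\cdot)}(u/\lambda)=\int_\O \lambda^{-p(x)}|u(x)|^{p(x)}\,dx,$$
so that the pointwise bounds $\lambda^{-p_2}\le \lambda^{-p(x)}\le \lambda^{-p_1}$ valid when $\lambda\ge 1$ (with inequalities reversed when $0<\lambda\le 1$) translate directly into the two-sided inequalities
$$\lambda^{p_1}\varrho_{p(\cdot)}(u/\lambda)\le \varrho_{p(\cdot)}(u)\le \lambda^{p_2}\varrho_{p(\cdot)}(u/\lambda)\quad (\lambda\ge 1),$$
and the reverse with $p_1,p_2$ swapped when $\lambda\le 1$. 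This is essentially the engine that drives (2) and (3).

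To apply it with $\lambda=\|u\|_{p(\cdot)}$ I first need the standard fact that, when $\lambda>0$, the infimum in the Luxemburg norm is attained, i.e. $\varrho_{p(\cdot)}(u/\lambda)= 1$. This follows by monotone/dominated convergence applied to the family $k\mapsto \varrho_{p(\cdot)}(u/k)$, which is continuous and nonincreasing in $k$: if $k_n\downarrow \lambda$ satisfies $\varrho_{p(\cdot)}(u/k_n)\le 1$, passing to the limit gives $\varrho_{p(\cdot)}(u/\lambda)\le 1$, and a similar argument with $k_n\uparrow\lambda$ forces the equality. Inserting $\varrho_{p(\cdot)}(u/\lambda)=1$ in the scaling inequalities above yields (2) and (3) at once.

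Items (1) and (4) then fall out by elementary manipulation. For (1), if $\lambda=\|u\|_{p(\cdot)}<1$, then by (3) $\varrho_{p(\cdot)}(u)\le \lambda^{p_1}<1$; conversely if $\varrho_{p(\cdot)}(u)<1$ then taking $k=1$ in the definition of the norm gives $\|u\|_{p(\cdot)}\le 1$, and strictness upgrades by the attainment $\varrho_{p(\cdot)}(u/\lambda)=1$. The equality and ``$>1$'' cases are analogous. For (4), if $\|u_n\|_{p(\cdot)}\to 0$ we may assume $\|u_n\|_{p(\cdot)}\le 1$, and (3) yields $\varrho_{p(\cdot)}(u_n)\le \|u_n\|_{p(\cdot)}^{p_1}\to 0$; the reverse implication uses (3) with $\varrho_{p(\cdot)}(u_n)\ge \|u_n\|_{p(\cdot)}^{p_2}$.

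For (5) the computation is direct: taking $k=\max\{|\O|^{1/p_1},|\O|^{1/p_2}\}\ge 1$ gives $\int_\O k^{-p(x)}\,dx\le k^{-p_1}|\O|\le 1$, so this $k$ is admissible in the infimum. Finally, (6) will be proved by choosing a disjoint family $\widetilde \O_i\subset \O_i$ with $\bigcup \widetilde\O_i=\O$ (up to null sets), writing $u=\sum_{i=1}^m u\chi_{\widetilde\O_i}$, applying the triangle inequality in $\lp$, and noting that $\|u\chi_{\widetilde\O_i}\|_{\lp}\le \|u\|_{L^{p(\cdot)}(\O_i)}$ since any $k$ admissible for the right-hand side is admissible for the left. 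The only mildly delicate point in the whole argument is the attainment $\varrho_{p(\cdot)}(u/\|u\|_{p(\cdot)})=1$; everything else is bookkeeping around the scaling inequality.
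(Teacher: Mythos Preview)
The paper does not actually prove this proposition: it simply cites Theorems 1.3 and 1.4 of Fan--Zhao \cite{FS}. Your direct argument is therefore more than what the paper offers, and it is essentially correct and standard --- the scaling identity together with the attainment $\varrho_{p(\cdot)}(u/\|u\|_{p(\cdot)})=1$ (which holds because $p_2<\infty$) is exactly how these modular--norm relations are established.

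There is one small slip in your treatment of (5). You assert that $k=\max\{|\O|^{1/p_1},|\O|^{1/p_2}\}\ge 1$ and then use $k^{-p(x)}\le k^{-p_1}$. This is only valid when $|\O|\ge 1$. If $|\O|<1$ then $k=|\O|^{1/p_2}<1$, and the correct pointwise bound is $k^{-p(x)}\le k^{-p_2}$, which still gives $\int_\O k^{-p(x)}\,dx\le k^{-p_2}|\O|=1$. So the conclusion survives, but you need to split into the two cases $|\O|\ge 1$ and $|\O|<1$ (which is, incidentally, why the $\max$ is there in the first place). Everything else --- including the disjointification argument for (6), which in fact yields $C=1$ --- is fine.
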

\begin{proof}
See Theorem 1.3 and Theorem 1.4  in \cite{FS}.
\end{proof}

For the proofs of the following three theorems we refer the reader
to \cite{KR}.

\begin{teo}\label{imb}
Let $q(x)\leq p(x)$, then
 $L^{p(\cdot)}(\Omega)\hookrightarrow L^{q(\cdot)}(\Omega)$
continuously.
\end{teo}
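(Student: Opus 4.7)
The plan is to reduce the embedding to a pointwise comparison between $|u(x)|^{q(x)}$ and $|u(x)|^{p(x)}$, and then translate from a modular estimate to a Luxemburg norm estimate by means of Proposition \ref{equi}. The starting observation is the elementary inequality
$$
a^{q}\le a^{p}+1 \qquad \text{for } a\ge 0,\ 0\le q\le p,
$$
which follows by splitting into $a\ge 1$ (so $a^q\le a^p$) and $0\le a<1$ (so $a^q\le 1$). Applied pointwise with $q=q(x)\le p(x)$ and integrated over the bounded set $\Omega$, this already yields $\varrho_{q(\cdot)}(u)\le \varrho_{p(\cdot)}(u)+|\Omega|$, giving the set-theoretic inclusion $L^{p(\cdot)}(\Omega)\subset L^{q(\cdot)}(\Omega)$.

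For the continuity, I would first normalize. Given $u\in L^{p(\cdot)}(\Omega)$ with $\lambda:=\|u\|_{p(\cdot)}>0$, set $v:=u/\lambda$, so Proposition \ref{equi}(1) gives $\varrho_{p(\cdot)}(v)\le 1$. Applying the pointwise inequality above to $a=|v(x)|$ and integrating produces
$$
\varrho_{q(\cdot)}(v)=\int_{\Omega}|v(x)|^{q(x)}\,dx \le \int_{\Omega}|v(x)|^{p(x)}\,dx+|\Omega| \le 1+|\Omega|.
$$

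Next I would convert this modular bound into a norm bound for $v$. By Proposition \ref{equi}(2)--(3) applied with exponent $q(\cdot)$: if $\|v\|_{q(\cdot)}\ge 1$ then $\|v\|_{q(\cdot)}^{q_1}\le \varrho_{q(\cdot)}(v)\le 1+|\Omega|$, while if $\|v\|_{q(\cdot)}<1$ the bound $\|v\|_{q(\cdot)}\le 1$ is automatic. In either case
$$
\|v\|_{q(\cdot)}\le C:=\max\bigl\{1,\,(1+|\Omega|)^{1/q_1}\bigr\}.
$$
Homogeneity of the Luxemburg norm then gives $\|u\|_{q(\cdot)}=\lambda\|v\|_{q(\cdot)}\le C\|u\|_{p(\cdot)}$, which is the desired continuous embedding, with a constant depending only on $|\Omega|$ and $q_1$.

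No single step is hard, but the place one has to be careful is exactly this last conversion: because the modular is not positively homogeneous in the variable-exponent setting, one cannot argue directly from a modular bound to a norm bound as in the constant-exponent case. The normalization step $v=u/\lambda$ combined with the two regimes in Proposition \ref{equi}(2)--(3) is essential to close the argument, and this is the only real subtlety of the proof.
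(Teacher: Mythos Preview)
Your proof is correct. The paper does not actually supply its own argument for this theorem: it simply refers the reader to \cite{KR} for the proof, so there is no in-paper approach to compare against. Your argument---normalizing to unit $L^{p(\cdot)}$-norm, using the pointwise inequality $a^{q}\le a^{p}+1$ to bound the $q(\cdot)$-modular by $1+|\Omega|$, and then converting the modular bound to a norm bound via Proposition~\ref{equi}(2)--(3)---is the standard elementary route and is essentially what one finds in the cited reference. The only minor remark is that in invoking Proposition~\ref{equi}(1) you could just as well appeal directly to the definition of the Luxemburg norm (monotone convergence gives $\varrho_{p(\cdot)}(u/\|u\|_{p(\cdot)})\le 1$), but this is a stylistic point, not a gap.
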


\begin{teo}\label{interp}
Let $p,q,r:\O\to [1,\infty)$ and $\ep>0$ be such that  $p(x)\le
r(x)<q(x)-\ep$ for all $x\in\Omega.$ Then, there exists a positive
constant $C$ such that for every $u\in L^{p(\cdot)}(\Omega)\cap
L^{q(\cdot)}(\Omega)$ the inequality
$$
\di\|u\|_{L^{r(\cdot)}(\O)}\leq C
\|u\|_{\lp}^{\mu}\|u\|_{\lq}^{\nu}
$$
holds, where $\mu>0$ and $\nu\ge0$ are define as
$$
\mu =
\begin{cases}
    {\di{\esssup_\O}}\di\frac{p(x)}{r(x)}\frac{q(x)-r(x)}{q(x)-p(x)} & \mbox{if } \|u\|_{\lp}>1,\\[.5cm]
    \di{\essinf_\O}\frac{p(x)}{r(x)}\frac{q(x)-r(x)}{q(x)-p(x)} & \mbox{if } \|u\|_{\lp}\le 1,
\end{cases}
$$
$$
\nu =
\begin{cases}
    \di{ \esssup_\O}\frac{q(x)}{r(x)}\frac{r(x)-p(x)}{q(x)-p(x)} & \mbox{if } \|u\|_{\lq}>1,\\[.5cm]
    \di{ \essinf_\O}\frac{q(x)}{r(x)}\frac{r(x)-p(x)}{q(x)-p(x)} & \mbox{if } \|u\|_{\lq}\le 1.\\
\end{cases}
$$

\end{teo}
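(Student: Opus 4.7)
\medskip
\noindent\textbf{Proof plan for Theorem \ref{interp}.} The plan is to mimic the classical interpolation argument $\|u\|_r\le \|u\|_p^{\theta}\|u\|_q^{1-\theta}$ while tracking the variable exponents. The key auxiliary function is
$$
\theta(x):=\frac{p(x)}{r(x)}\cdot\frac{q(x)-r(x)}{q(x)-p(x)},
$$
for which a short computation gives
$$
1-\theta(x)=\frac{q(x)}{r(x)}\cdot\frac{r(x)-p(x)}{q(x)-p(x)},\qquad
\frac{1}{r(x)}=\frac{\theta(x)}{p(x)}+\frac{1-\theta(x)}{q(x)}.
$$
The hypothesis $p(x)\le r(x)<q(x)-\ep$ ensures $\theta(x)\in(\delta_0,1]$ for a uniform $\delta_0>0$, which will prevent the conjugate exponents introduced below from degenerating.

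Next, I would use the pointwise factorization $|u(x)|^{r(x)}=|u(x)|^{\theta(x) r(x)}\cdot|u(x)|^{(1-\theta(x)) r(x)}$ and apply the generalized H\"older inequality in variable exponent Lebesgue spaces to the pair of conjugate exponents
$$
a(x):=\frac{p(x)}{\theta(x) r(x)},\qquad b(x):=\frac{q(x)}{(1-\theta(x)) r(x)},
$$
which by the identity above satisfy $1/a(x)+1/b(x)=1$. The resulting inequality bounds $\varrho_{r(\cdot)}(u)$ (or, dually, $\|u\|_{r(\cdot)}$) by a product of two terms of the form $\bigl\||u|^{\theta(\cdot)r(\cdot)}\bigr\|_{a(\cdot)}$ and $\bigl\||u|^{(1-\theta(\cdot))r(\cdot)}\bigr\|_{b(\cdot)}$, whose modulars collapse to $\varrho_{p(\cdot)}(u)$ and $\varrho_{q(\cdot)}(u)$ respectively.

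To finish, I would convert modulars back to Luxemburg norms using Proposition \ref{equi}(2)--(3): for $\lambda=\|v\|_{p(\cdot)}$ one has $\lambda^{p_1}\le \varrho_{p(\cdot)}(v)\le \lambda^{p_2}$ when $\lambda\ge 1$, and the reversed inequalities when $\lambda\le 1$. Applying this dichotomy to $\|u\|_{p(\cdot)}$ and $\|u\|_{q(\cdot)}$ separately produces the four regimes encoded in the statement, and the essential supremum or infimum in the exponents $\mu$, $\nu$ arises precisely because the variable exponent $p(x)/(\theta(x)r(x))$ must be replaced, in a worst-case sense, by a single constant power.

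The main obstacle I anticipate is exactly this last bookkeeping step: unlike the classical case, where the identity $\||u|^{\theta}\|_{p/\theta}=\|u\|_p^{\theta}$ is trivial, in the variable exponent setting the norm $\bigl\||u|^{\theta(\cdot)r(\cdot)}\bigr\|_{a(\cdot)}$ cannot be written as $\|u\|_{p(\cdot)}$ raised to a fixed power. One has to estimate it by breaking into the cases $\|u\|_{p(\cdot)}\gtrless 1$ and choosing, in each regime, the extremal pointwise value of $\theta(x)\, p(x)/r(x) \cdot \text{(correction)}$; the correct choice---essinf when the norm is $\le 1$ and esssup otherwise---is what yields the piecewise definition of $\mu$ and $\nu$.
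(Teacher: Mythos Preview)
The paper does not give its own proof of this theorem: together with Theorems \ref{imb} and \ref{ref}, it is simply referred to \cite{KR} (Kov\'a\v{c}ik--R\'akosn\'{\i}k). So there is no in-paper argument to compare against; your plan is an attempt to supply what the paper outsources.

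That said, your outline is exactly the standard route and is essentially the argument in \cite{KR}: define the pointwise interpolation parameter $\theta(x)$ via $1/r=\theta/p+(1-\theta)/q$, factor $|u|^{r}=|u|^{\theta r}|u|^{(1-\theta)r}$, apply the variable-exponent H\"older inequality with the conjugate pair $a=p/(\theta r)$, $b=q/((1-\theta)r)$, and then pass from modulars to Luxemburg norms using the unit-ball inequalities of Proposition~\ref{equi}. Your identification of the delicate step is also correct: the conversion $\bigl\||u|^{\theta(\cdot)r(\cdot)}\bigr\|_{a(\cdot)}\leadsto \|u\|_{p(\cdot)}^{\mu}$ is where the $\esssup/\essinf$ dichotomy enters, and the hypothesis $r<q-\ep$ is used exactly to keep $\theta$ uniformly bounded away from $0$ so that the exponent $\mu$ stays positive. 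One small point worth making explicit when you carry this out: to bound $\bigl\||u|^{\theta r}\bigr\|_{a(\cdot)}$ you should work directly with the definition of the Luxemburg norm (i.e., test whether $\int (|u|/\lambda_1)^{p(x)}\,dx\le 1$ for $\lambda_1=\|u\|_{p(\cdot)}$ and then estimate $\int (|u|^{\theta r}/\lambda_1^{\theta r})^{a(x)}\,dx$), rather than going through the modular $\varrho_{a(\cdot)}$ and back, since the exponent $a(x)$ itself is variable and the two-sided bounds in Proposition~\ref{equi} would otherwise introduce an extra layer of constants.
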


\begin{teo}\label{ref}
Let $\di p'(x)$ such that, ${1}/{p(x)}+{1}/{p'(x)}=1.$ Then
$L^{p'(\cdot)}(\Omega)$ is the dual of $L^{p(\cdot)}(\Omega)$.
Moreover, if $p_1>1$, $L^{p(\cdot)}(\Omega)$ and
$W^{1,p(\cdot)}(\Omega)$ are reflexive.
\end{teo}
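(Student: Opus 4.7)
The plan is to establish the duality pairing explicitly, then deduce reflexivity of $L^{p(\cdot)}(\Omega)$ from uniform convexity (when $p_1>1$), and finally transfer reflexivity to $W^{1,p(\cdot)}(\Omega)$ via a closed-subspace embedding into a product.

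First, I would prove the variable-exponent H\"older inequality: for $f\in L^{p(\cdot)}(\Omega)$ and $g\in L^{p'(\cdot)}(\Omega)$, the pointwise Young inequality $|f(x)g(x)|\le |f(x)|^{p(x)}/p(x)+|g(x)|^{p'(x)}/p'(x)$ combined with a normalization by the Luxemburg norms yields
\[
\int_\Omega |fg|\,dx \;\le\; \Bigl(\tfrac{1}{p_1}+\tfrac{1}{(p')_1}\Bigr)\,\|f\|_{p(\cdot)}\|g\|_{p'(\cdot)}.
\]
This shows that $g\mapsto L_g$, with $L_g(f):=\int_\Omega fg\,dx$, defines a bounded linear map $j\colon L^{p'(\cdot)}(\Omega)\to (L^{p(\cdot)}(\Omega))^*$. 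Injectivity is immediate. For surjectivity, given $\Phi\in (L^{p(\cdot)}(\Omega))^*$, I would define the set function $\nu(E):=\Phi(\chi_E)$ on measurable $E\subset\Omega$; countable additivity follows from continuity of $\Phi$ together with the fact that $\|\chi_{E_n}\|_{p(\cdot)}\to 0$ whenever $|E_n|\to 0$ (which in turn uses Proposition \ref{equi}(4)). Since $\nu\ll dx$, the Radon--Nikodym theorem gives $g\in L^1_{loc}$ with $\nu(E)=\int_E g\,dx$, so $\Phi(f)=\int fg\,dx$ for simple $f$. To conclude $g\in L^{p'(\cdot)}(\Omega)$ with $\|g\|_{p'(\cdot)}\le C\|\Phi\|$, I would test $\Phi$ against truncations of the function $f_n(x)=\mathrm{sgn}(g(x))|g(x)|^{p'(x)-1}\chi_{\{|g|\le n\}}$ suitably normalized, use the conjugation identity $|f_n|^{p(x)}=|g|^{p'(x)}$ on the support, and pass to the limit by monotone convergence; this forces $\varrho_{p'(\cdot)}(g/\|\Phi\|)\le 1$ after rescaling, giving the desired norm bound, and then a density argument (simple functions are dense in $L^{p(\cdot)}(\Omega)$ since $p_2<\infty$) extends $\Phi(f)=\int fg\,dx$ to all $f\in L^{p(\cdot)}(\Omega)$.

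For reflexivity of $L^{p(\cdot)}(\Omega)$ under $p_1>1$, my plan is to prove uniform convexity and invoke Milman--Pettis. From the pointwise Clarkson-type inequality, valid for $p(x)\ge p_1>1$ (handled in two ranges $p_1\ge 2$ and $1<p_1<2$ separately, using second and first Clarkson inequalities respectively applied with exponent $p(x)$), one gets a modular estimate of the form
\[
\varrho_{p(\cdot)}\!\Bigl(\tfrac{f+g}{2}\Bigr)+\varrho_{p(\cdot)}\!\Bigl(\tfrac{f-g}{2}\Bigr)\le \tfrac{1}{2}\bigl(\varrho_{p(\cdot)}(f)+\varrho_{p(\cdot)}(g)\bigr)
\]
in the $p_1\ge 2$ case (with the analogous dual-exponent version otherwise). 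Combined with the equivalence between modular and norm convergence (Proposition \ref{equi}(2)--(4)), this yields: for every $\varepsilon>0$ there exists $\delta>0$ such that $\|f\|_{p(\cdot)}=\|g\|_{p(\cdot)}=1$ and $\|f-g\|_{p(\cdot)}>\varepsilon$ imply $\|(f+g)/2\|_{p(\cdot)}<1-\delta$. Milman--Pettis then gives reflexivity.

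Finally, for $W^{1,p(\cdot)}(\Omega)$, I would consider the map $T\colon W^{1,p(\cdot)}(\Omega)\to L^{p(\cdot)}(\Omega)\times (L^{p(\cdot)}(\Omega))^N$ defined by $T(u)=(u,\nabla u)$. The definition of the Sobolev norm makes $T$ an isometry into the product space with the natural product norm, and its image is closed because distributional gradients are closed under $L^{p(\cdot)}$-convergence (which implies $L^1_{loc}$-convergence via Proposition \ref{equi} and Theorem \ref{imb} on bounded subdomains). Products of reflexive spaces are reflexive and closed subspaces of reflexive spaces are reflexive, so $W^{1,p(\cdot)}(\Omega)$ inherits reflexivity. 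The main obstacle throughout is the surjectivity step in the duality argument: the delicate point is constructing admissible test functions to certify $g\in L^{p'(\cdot)}(\Omega)$ with a quantitative norm bound, which requires careful truncation and use of the modular/norm equivalence rather than the direct Hahn--Banach/Riesz template available in the constant-exponent setting.
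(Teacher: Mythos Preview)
The paper does not give its own proof of this result: just before Theorem~\ref{imb} it says ``For the proofs of the following three theorems we refer the reader to~\cite{KR}'', and Theorem~\ref{ref} is the third of those. Your outline is essentially the standard argument one finds in Kov\'a\v{c}ik--R\'akosn\'{\i}k~\cite{KR} (and in~\cite{DHHR}): H\"older's inequality gives the bounded embedding $L^{p'(\cdot)}\hookrightarrow (L^{p(\cdot)})^*$, the Radon--Nikodym representation together with the truncated test functions $f_n=\mathrm{sgn}(g)|g|^{p'(\cdot)-1}\chi_{\{|g|\le n\}}$ gives surjectivity and the norm bound, uniform convexity yields reflexivity of $L^{p(\cdot)}$ via Milman--Pettis, and the closed-subspace-of-a-product argument transfers reflexivity to $W^{1,p(\cdot)}$. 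So there is nothing to compare against in the paper itself; you are supplying what the cited reference does.

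One technical point deserves care. The modular Clarkson inequality you wrote,
\[
\varrho_{p(\cdot)}\Bigl(\tfrac{f+g}{2}\Bigr)+\varrho_{p(\cdot)}\Bigl(\tfrac{f-g}{2}\Bigr)\le \tfrac12\bigl(\varrho_{p(\cdot)}(f)+\varrho_{p(\cdot)}(g)\bigr),
\]
follows from the pointwise inequality only where $p(x)\ge 2$; the second Clarkson inequality for $1<p(x)<2$ carries an outer exponent $p'(x)/p(x)$ which varies with $x$ and does not integrate to a clean modular statement. The route taken in~\cite{KR} (and~\cite{DHHR}) is to split $\Omega=\{p\ge 2\}\cup\{p<2\}$, note that $L^{p(\cdot)}(\Omega)$ is isomorphic to the product $L^{p(\cdot)}(\{p\ge 2\})\times L^{p(\cdot)}(\{p<2\})$, prove uniform convexity of the first factor directly and of the second via the duality you have already established (its conjugate exponent is $\ge 2$), and conclude since products and duals of uniformly convex spaces are reflexive. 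With this adjustment your plan goes through.
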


\medskip

Now we give  some well known inequalities,

\begin{prop}\label{desip} For any $x$ fixed we have the following inequalities
\begin{align*}
&|\eta-\xi|^{p(x)}\leq C (|\eta|^{p(x)-2} \eta-|\xi|^{p(x)-2} \xi)
(\eta-\xi)&\quad  \mbox{ if } p(x)\geq 2,\\
&
 |\eta-\xi|^2\Big(|\eta|+|\xi|\Big)^{p(x)-2}
\leq C (|\eta|^{p(x)-2} \eta-|\xi|^{p(x)-2} \xi)
(\eta-\xi)&\quad  \mbox{ if } p(x)< 2,\\
&|\eta|^{p(x)}\leq
2^{p(x)-1}(|\eta-\xi|^{p(x)}+|\xi|^{p(x)})&\quad  \mbox{ if }
p(x)\geq 1.
\end{align*}
\end{prop}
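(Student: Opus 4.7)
The plan is to observe that all three inequalities are stated pointwise in $x$, so we may fix $x$ and treat $p=p(x)$ as a constant exponent with $p\ge 1$. The claims then reduce to classical inequalities in $\R^N$ for the nonlinear map $V_p(\eta):=|\eta|^{p-2}\eta$ and for the $p$-th power of a sum, which is why the author calls them "well known". I would handle the third inequality separately and deduce the first two from a single unifying estimate.

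The third inequality is the easiest: by the triangle inequality $|\eta|\le|\eta-\xi|+|\xi|$; raising to the $p$-th power and applying the elementary convexity bound $(a+b)^p\le 2^{p-1}(a^p+b^p)$ for $a,b\ge 0$ and $p\ge 1$ (Jensen for $t\mapsto t^p$) gives the claim. For the first two, I would use the fundamental theorem of calculus applied to the $C^1$ map $V_p$:
$$(V_p(\eta)-V_p(\xi))\cdot(\eta-\xi)=\int_0^1 DV_p(\eta_t)(\eta-\xi)\cdot(\eta-\xi)\,dt,\qquad \eta_t:=\xi+t(\eta-\xi).$$
The Jacobian is $DV_p(\eta)=|\eta|^{p-2}\bigl(I+(p-2)\,\widehat{\eta}\otimes\widehat{\eta}\bigr)$, whose eigenvalues are $|\eta|^{p-2}$ and $(p-1)|\eta|^{p-2}$, so
$$DV_p(\eta_t)(\eta-\xi)\cdot(\eta-\xi)\ge \min(1,p-1)\,|\eta_t|^{p-2}\,|\eta-\xi|^2.$$

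The key auxiliary step is the one-variable bound
$$\int_0^1 |\xi+t(\eta-\xi)|^{p-2}\,dt\ge c_p\,(|\eta|+|\xi|)^{p-2},$$
whose proof is a routine case analysis: on some subinterval of $[0,1]$ of fixed length, the point $\eta_t$ lies at a distance from the origin comparable to $\max(|\eta|,|\xi|)$. Substituting this lower bound yields
$$(V_p(\eta)-V_p(\xi))\cdot(\eta-\xi)\ge c_p\,(|\eta|+|\xi|)^{p-2}\,|\eta-\xi|^2,$$
which is precisely the second inequality (case $p<2$). For the first inequality (case $p\ge 2$), combine it with the crude bound $|\eta-\xi|\le |\eta|+|\xi|$ so that $(|\eta|+|\xi|)^{p-2}|\eta-\xi|^2\ge |\eta-\xi|^p$.

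The main obstacle is the integral lower bound in the sub-quadratic range $1<p<2$, where $|\eta_t|^{p-2}$ can blow up as $\eta_t\to 0$; one must extract a constant $c_p$ independent of $\eta,\xi$. The standard workaround is to separate into the cases $|\eta-\xi|\le\tfrac12\max(|\eta|,|\xi|)$, in which $|\eta_t|$ is uniformly comparable to $\max(|\eta|,|\xi|)$ on all of $[0,1]$, and the complementary case, which reduces after normalization to a scale-invariant one-dimensional integral and can be checked directly. Everything else is elementary manipulation of the explicit form of $DV_p$ and standard convexity.
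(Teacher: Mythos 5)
The paper offers no proof of this proposition at all: it simply labels the three inequalities as ``well known'' (they are standard monotonicity estimates for the map $V_p(\eta)=|\eta|^{p-2}\eta$), so there is no argument of the authors' to compare yours against. Your proof is correct and is essentially the canonical one: the third inequality from the triangle inequality plus convexity of $t\mapsto t^p$, and the first two from the integral representation $(V_p(\eta)-V_p(\xi))\cdot(\eta-\xi)=\int_0^1 DV_p(\eta_t)(\eta-\xi)\cdot(\eta-\xi)\,dt$ together with the spectral lower bound $DV_p(\zeta)v\cdot v\ge\min(1,p-1)|\zeta|^{p-2}|v|^2$. Two small remarks. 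First, you have the difficulty located in the wrong case: for $1<p<2$ the lower bound $\int_0^1|\eta_t|^{p-2}\,dt\ge(|\eta|+|\xi|)^{p-2}$ is immediate with constant $1$, because $|\eta_t|=|(1-t)\xi+t\eta|\le\max(|\eta|,|\xi|)\le|\eta|+|\xi|$ and the exponent $p-2$ is negative, so the pointwise bound goes the right way; the possible blow-up of $|\eta_t|^{p-2}$ near a zero of $\eta_t$ only helps a lower bound (and is in any case integrable since $p-2>-1$). It is the case $p\ge2$ that needs the subinterval argument you describe (restrict to $t$ near the endpoint of larger norm, where $|\eta_t|\gtrsim\max(|\eta|,|\xi|)$). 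Second, if the segment from $\xi$ to $\eta$ passes through the origin, $V_p$ fails to be $C^1$ there for $p<2$, so the fundamental theorem of calculus step should be justified by absolute continuity of $t\mapsto V_p(\eta_t)$ (or by excising an $\varepsilon$-neighbourhood of the single bad parameter value and letting $\varepsilon\to0$); this is routine but worth a sentence. With those adjustments the argument is complete, and the degeneration of the constant as $p\to1$ is harmless here since (H1) assumes $p_1>1$.
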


These inequalities say that the function $A(x,q)=|q|^{p(x)-2}q$ is
strictly monotone.

\begin{prop}\label{debil}
Let $F_n,F\in\lp.$
\begin{enumerate}
\item If
$$F_n\rightharpoonup F \mbox{ weakly in } \lp$$
then
$$\int_{\O}|F|^{p(x)}\, dx \leq \liminf_{n\to\infty}
\int_{\O}|F_n|^{p(x)}\, dx.$$

\item If
 $$ F_n \rightarrow F \mbox{ strongly in } \lp$$ then
$$\int_\O |F_n|^{p(x)}\, dx \to \int_\O |F|^{p(x)}\, dx.$$

\item If
 \begin{equation}\label{debilmasnorma}F_n\rightharpoonup F \mbox{ weakly in } \lp \quad \mbox{and}\quad\int_\O |F_n|^{p(x)}\, dx \to \int_\O |F|^{p(x)}\, dx\end{equation} then
 $$ F_n \rightarrow F \mbox{ strongly in } \lp.$$

\end{enumerate}

\end{prop}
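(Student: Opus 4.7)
My plan is to treat the three items separately, in the stated order, with (2) proved independently since (1) will use it.

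For item (1), I note that the modular $\rho_{p(\cdot)}(u) := \int_{\O}|u(x)|^{p(x)}\,dx$ is a convex functional on $\lp$, because $t\mapsto|t|^{p(x)}$ is convex for every $x$. Starting from $F_n\rightharpoonup F$ and after extracting a subsequence that realizes $\liminf_n \rho_{p(\cdot)}(F_n)$, Mazur's lemma furnishes finite convex combinations $\tilde F_k=\sum_{j\ge k}\lambda^k_j F_j$ with $\tilde F_k\to F$ strongly in $\lp$. Convexity gives $\rho_{p(\cdot)}(\tilde F_k)\le\sup_{j\ge k}\rho_{p(\cdot)}(F_j)$, while item (2) gives $\rho_{p(\cdot)}(\tilde F_k)\to\rho_{p(\cdot)}(F)$; passing to the limit delivers $\rho_{p(\cdot)}(F)\le\liminf_n\rho_{p(\cdot)}(F_n)$.

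For item (2), I would combine the elementary pointwise inequality $\bigl||a|^{p(x)}-|b|^{p(x)}\bigr|\le p_2(|a|+|b|)^{p(x)-1}|a-b|$ with H\"older's inequality in variable exponents to bound
\begin{equation*}
\int_{\O}\bigl||F_n|^{p(x)}-|F|^{p(x)}\bigr|\,dx\le p_2\,\bigl\|(|F_n|+|F|)^{p(\cdot)-1}\bigr\|_{p'(\cdot)}\,\|F_n-F\|_{p(\cdot)}.
\end{equation*}
Since $(p(x)-1)p'(x)=p(x)$, the modular of $(|F_n|+|F|)^{p(x)-1}$ in $\lpp$ equals the modular of $|F_n|+|F|$ in $\lp$, which is uniformly bounded because $\|F_n\|_{p(\cdot)}$ is. Proposition \ref{equi} converts this into a uniform bound on $\|(|F_n|+|F|)^{p(\cdot)-1}\|_{p'(\cdot)}$, and the factor $\|F_n-F\|_{p(\cdot)}\to 0$ closes the argument.

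For item (3), I would employ a convex-combination plus uniform-convexity argument. Setting $G_n:=(F_n+F)/2$, one still has $G_n\rightharpoonup F$, so item (1) gives $\rho_{p(\cdot)}(F)\le\liminf\rho_{p(\cdot)}(G_n)$; meanwhile, convexity of $|\cdot|^{p(x)}$ together with the hypothesis $\rho_{p(\cdot)}(F_n)\to\rho_{p(\cdot)}(F)$ yields $\limsup\rho_{p(\cdot)}(G_n)\le\rho_{p(\cdot)}(F)$. Hence the non-negative convexity defect
\begin{equation*}
D_n(x):=\tfrac12|F_n(x)|^{p(x)}+\tfrac12|F(x)|^{p(x)}-\bigl|\tfrac{F_n(x)+F(x)}{2}\bigr|^{p(x)}
\end{equation*}
satisfies $\int_{\O}D_n\,dx\to 0$, so along a subsequence $D_n\to 0$ a.e. Strict convexity and superlinear growth of $t\mapsto|t|^{p(x)}$ (valid because $p(x)\ge p_1>1$) then force $F_n(x)\to F(x)$ a.e.\ along the subsequence. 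Finally, the inequality $|F_n-F|^{p(x)}\le 2^{p_2-1}(|F_n|^{p(x)}+|F|^{p(x)})$ of Proposition \ref{desip}, applied via Fatou's lemma to the non-negative integrand $2^{p_2-1}(|F_n|^{p(x)}+|F|^{p(x)})-|F_n-F|^{p(x)}$ together with the modular-convergence hypothesis, forces $\int_{\O}|F_n-F|^{p(x)}\,dx\to 0$ on the subsequence. Proposition \ref{equi} upgrades modular convergence to $\|F_n-F\|_{p(\cdot)}\to 0$, and a standard subsequence argument extends the conclusion to the full sequence.

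The step I expect to be the main obstacle is extracting pointwise convergence of $F_n$ from $D_n\to 0$ a.e.: one must rule out that $|F_n(x)|$ escapes to infinity while the defect vanishes, which requires a quantitative form of strict convexity of $t\mapsto|t|^{p(x)}$ uniform in $x$. The hypothesis $p_1>1$ is precisely what makes this uniform quantification possible.
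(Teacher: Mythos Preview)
Your arguments are correct. The paper itself does not prove this proposition: it simply cites Theorem~3.9 and Lemma~2.4.17 of \cite{DHHR} for items (1) and (3), and Proposition~2.3 of \cite{FZ} for item (2). So your proposal is a self-contained proof where the paper offers only references.

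A couple of remarks on your approach. For (2), your mean-value-plus-H\"older estimate is the standard route and coincides with what the cited reference does. For (1), the Mazur argument is clean; note that you use (2), so the logical order should be (2) first, then (1), then (3). For (3), your convexity-defect argument is the classical Radon--Riesz/Clarkson idea specialized to the modular. One small overcaution: in the last paragraph you worry about needing strict convexity ``uniform in $x$''. In fact the argument is pointwise: at each fixed $x$ with $p(x)>1$, if $|F_n(x)|\to\infty$ along a subsequence then $D_n(x)\ge\bigl(\tfrac12-2^{-p(x)}\bigr)|F_n(x)|^{p(x)}+o(|F_n(x)|^{p(x)})\to\infty$, contradicting $D_n(x)\to 0$. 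So $p>1$ a.e.\ suffices, and no uniformity in $x$ is required for this step; the global hypothesis $p_1>1$ is convenient but stronger than what your pointwise argument actually uses. The Fatou step at the end is correctly set up, and the subsequence-of-subsequence argument to recover the full sequence is standard.

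In summary: your route is genuinely different from the paper's (which gives no argument), it is correct, and it has the advantage of being self-contained within the preliminaries already established in Section~\ref{appA1}.
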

\begin{proof}
For the proof of (1) and (3) see  Theorem 3.9 and Lemma 2.4.17  in
\cite{DHHR}. Finally (2) follows by Proposition 2.3 in \cite{FZ}.
%
\end{proof}

Let $W^{1,p(\cdot)}(\Omega)$ denote the space of measurable
functions $u$ such that, $u $ and the distributional derivative
$\nabla u$ are in $L^{p(\cdot)}(\Omega)$. The norm
$$
\|u\|_{W^{1,p(\cdot)}(\O)}\colon= \|u\|_{p(\cdot)} + \| |\nabla u|
\|_{p(\cdot)}
$$
makes $W^{1,p(\cdot)}(\Omega)$ a Banach space.

We define the space $W_0^{1,p(\cdot)}(\Omega)$ as the closure of
$C_0^{\infty}(\Omega)$ in $W^{1,p(\cdot)}(\Omega)$. Then we have
the following version of Poincar\'e inequality, see Theorem 3.10
in \cite{KR}.

\begin{lema}\label{poinc} If $p:\O\to[1,+\infty)$ is continuous in $\overline{\Omega}$,
    there exists a constant $C$ such that for every $u\in W_0^{1,p(\cdot)}(\Omega)$,
$$
\|u\|_{L^{p(\cdot)}(\Omega)}\leq C\|\nabla
u\|_{L^{p(\cdot)}(\Omega)}.
$$
\end{lema}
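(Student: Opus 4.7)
By the definition of $W_0^{1,p(\cdot)}(\Omega)$ as the closure of $C_0^\infty(\Omega)$ and the triangle inequality, it suffices to prove the estimate for $u\in C_0^\infty(\Omega)$, extended by zero to $\mathbb R^N$. After a translation/rotation I may assume $\Omega\subset (0,d)\times\mathbb R^{N-1}$ with $d=\diam(\Omega)$. For $x=(x_1,x')\in\Omega$ the fundamental theorem of calculus yields
\begin{equation*}
|u(x_1,x')| \;\le\; \int_0^d |\nabla u(t,x')|\,\chi_\Omega(t,x')\,dt.
\end{equation*}

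To pass to the variable-exponent norm, I normalize: set $\lambda:=\|\nabla u\|_{L^{p(\cdot)}(\Omega)}$ and $v:=u/\lambda$, so that by Proposition \ref{equi} the modular $\varrho_{p(\cdot)}(\nabla v)\le 1$. I need a constant $C=C(d,p_1,p_2,|\Omega|)$ with $\varrho_{p(\cdot)}(v/C)\le 1$; the lemma then follows from Proposition \ref{equi}. The key pointwise tool is the elementary inequality $s\le 1+s^{p(t,x')/p_1}$ (true since $p(t,x')/p_1\ge 1$ so either $s\le 1$ or $s\le s^{p(t,x')/p_1}$). Inserting this in the 1D bound and applying the classical H\"older inequality in $t$ with constant exponent $p_1$, I obtain the pointwise estimate
\begin{equation*}
|v(x_1,x')| \;\le\; d + d^{\,1-1/p_1}\, H(x')^{1/p_1},\qquad H(x'):=\int_0^d |\nabla v(t,x')|^{p(t,x')}\chi_\Omega\,dt,
\end{equation*}
with the Fubini bound $\int_{\mathbb R^{N-1}} H(x')\,dx' \le \varrho_{p(\cdot)}(\nabla v)\le 1$.

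To conclude, I split the integral $\varrho_{p(\cdot)}(v/C)=\int_\Omega(|v|/C)^{p(x)}\,dx$ according to whether $|v|\le C$ or $|v|>C$. On the first region $(|v|/C)^{p(x)}\le(|v|/C)^{p_1}$, and the pointwise bound together with $\int H\,dx'\le 1$ gives an $L^{p_1}$-estimate $\|v\|_{L^{p_1}(\Omega)}\le C_1(d,p_1,|\Omega|)$, so this contribution is at most $C_1^{p_1}/C^{p_1}$. On the second region $(|v|/C)^{p(x)}\le(|v|/C)^{p_2}$; using the pointwise bound, Chebyshev's inequality $|\{H>\tau\}|\le \tau^{-1}$, and choosing the threshold $\tau$ comparable to $(C/d^{\,1-1/p_1})^{p_1}$, this contribution also becomes $\le C_2(d,p_1,p_2,|\Omega|)/C^{\,p_2-p_1}$ times a summable tail in $C$. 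Choosing $C$ large enough makes the total at most $1$.

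The main obstacle is that $H(x')$ need not be bounded, so $v$ is not controlled in $L^{p_2}$; the naive attempt to bound $(|v|/C)^{p(x)}\le(|v|/C)^{p_1}+(|v|/C)^{p_2}$ globally breaks down at this point. The resolution is the case split on $\{|v|>C\}$ together with Chebyshev, which extracts enough decay from the $L^1$ control of $H$ to tame the large-$|v|$ regime. The argument uses continuity of $p$ on $\overline\Omega$ only through compactness, which ensures $1\le p_1\le p_2<\infty$; no log-H\"older regularity is needed here.
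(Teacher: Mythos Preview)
The paper does not prove this lemma; it only cites Theorem~3.10 of \cite{KR}. Your attempt at a self-contained argument has a genuine gap in the treatment of the region $\{|v|>C\}$. From the pointwise bound $|v(x)|\le d+d^{1-1/p_1}H(x')^{1/p_1}$ together with $\int H\,dx'\le 1$, Chebyshev gives only the weak-type estimate $|\{|v|>t\}|\le c\,t^{-p_1}$. Feeding this into the layer-cake representation of $\int_{\{|v|>C\}}(|v|/C)^{p_2}\,dx$ produces a tail comparable to $C^{-p_2}\int_C^\infty t^{\,p_2-1-p_1}\,dt$, which diverges as soon as $p_2>p_1$. Equivalently, on $\{|v|>C\}$ one would need a bound on $\int_{\{H>\tau\}}H^{p_2/p_1}\,dx'$, and this simply does not follow from $\|H\|_{L^1}\le 1$ alone. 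The phrase ``a summable tail in $C$'' conceals exactly this divergence.

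This is also why your final remark is incorrect: continuity of $p$ is \emph{not} used merely to secure finite $p_1,p_2$. In the Kov\'a\v{c}ik--R\'akosn\'{\i}k argument, uniform continuity of $p$ on $\overline\Omega$ is used to cover $\Omega$ by finitely many small pieces on each of which the oscillation $p_+-p_-$ is below a prescribed $\varepsilon$; the one-dimensional estimate is then applied locally with an almost constant exponent, and the small oscillation permits passing from $|\cdot|^{p_-}$ to $|\cdot|^{p(x)}$ at bounded cost (compare the role of Proposition~\ref{papa} elsewhere in the paper). This localization is precisely the ingredient that bridges the gap between $p_1$ and $p_2$ which your global scheme cannot close; inserting such a step before the case split would repair the proof.
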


We also have the following version of the Poincar\'e inequality,
see Lemma 2.1 in \cite{HH},
\begin{teo}\label{poinc2} Let $\O\subset \R^n$ be a Lipschitz domain and $p,q:\O\to[1,+\infty)$ with   $p\leq q \leq p^*$. Then,
$$\|u-(u)_{\O}\|_{L^{q(\cdot)}(\Omega)}\leq C\|\nabla
u\|_{L^{p(\cdot)}(\Omega)}$$ for all $u\in \wp$, where
$(u)_{\O}=\frac1{|\O|}\int_{\O} u \, dx.$

\end{teo}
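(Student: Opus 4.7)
The plan is to prove the inequality in two stages: first establish the variable exponent Poincaré--Wirtinger inequality for the endpoint case $q=p$, and then upgrade to arbitrary $q$ with $p\le q\le p^*$ by invoking the Sobolev embedding.

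For the base case $q=p$, I would argue by contradiction using compactness. Suppose the bound fails; then one can find a sequence $\{u_n\}\subset\wp$ with $(u_n)_\O=0$, $\|u_n\|_{\lp}=1$, and $\|\nabla u_n\|_{\lp}\to 0$. The sequence is bounded in $\wp$, which is reflexive by Theorem~\ref{ref}, so along a subsequence $u_n\rightharpoonup u$ weakly in $\wp$. Using a variable-exponent Rellich--Kondrachov compact embedding $\wp\hookrightarrow\hookrightarrow\lp$ (which can be obtained, for instance, by first embedding $\wp\hookrightarrow W^{1,p_1}(\O)$ through Theorem~\ref{imb}, applying the classical Rellich theorem to get compactness into $L^{p_1}(\O)$, and then interpolating with the Sobolev embedding $\wp\hookrightarrow\lpe$ by means of Theorem~\ref{interp}), one obtains $u_n\to u$ strongly in $\lp$. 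Consequently $\|u\|_{\lp}=1$ and $(u)_\O=0$. Applying Proposition~\ref{debil}(1) to the weakly convergent gradients yields $\int_\O|\nabla u|^{p(x)}\,dx=0$, so $\nabla u=0$ a.e.\ and $u$ is constant on the (connected) Lipschitz domain; combined with the vanishing mean this forces $u\equiv 0$, contradicting $\|u\|_{\lp}=1$.

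For general $q$ with $p\le q\le p^*$, set $v:=u-(u)_\O$, so that $v$ has zero mean. Composing the Sobolev embedding $\wp\hookrightarrow\lpe$ (which requires the log-Hölder hypothesis on $p$, as used elsewhere in the paper) with the monotonicity embedding $\lpe\hookrightarrow\lq$ furnished by Theorem~\ref{imb} gives
\begin{equation*}
\|v\|_{\lq}\le C\,\|v\|_{\wp}=C\bigl(\|v\|_{\lp}+\|\nabla v\|_{\lp}\bigr).
\end{equation*}
The base case controls $\|v\|_{\lp}$ by $\|\nabla v\|_{\lp}=\|\nabla u\|_{\lp}$, and combining the two estimates delivers the desired inequality.

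The main obstacle is the compactness $\wp\hookrightarrow\hookrightarrow\lp$ used in the contradiction step: unlike the constant exponent case it is genuinely more delicate and depends crucially on the log-Hölder continuity of $p$, since that hypothesis is what makes the Sobolev embedding $\wp\hookrightarrow\lpe$ available to drive the interpolation. A secondary, bookkeeping, issue is that a Lipschitz domain may have several connected components, in which case the contradiction argument must be applied on each component separately.
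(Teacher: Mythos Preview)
The paper does not actually prove this theorem: it is simply quoted from the literature, with the line ``see Lemma~2.1 in \cite{HH}'' immediately preceding the statement. So there is no in-paper argument to compare your proposal against.

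That said, your sketch is a perfectly standard route and, under the paper's standing hypothesis that $p$ is log-H\"older (which the authors themselves say in the Introduction is needed for Theorem~\ref{poinc2}, even though it is omitted from the statement), it goes through. Two small remarks. First, you invoke reflexivity of $\wp$ via Theorem~\ref{ref}, which requires $p_1>1$; the statement as written allows $p_1=1$. This is easily repaired: since $\|\nabla u_n\|_{\lp}\to0$ you do not need weak compactness of the gradients at all---once you have $u_n\to u$ strongly in $\lp$ (from Rellich in $L^{p_1}$ plus your interpolation), the sequence is Cauchy in $\wp$ and the limit has $\nabla u=0$ directly. Second, your interpolation step (Theorem~\ref{interp}) needs a uniform gap $p(x)<p^*(x)-\varepsilon$; this is automatic when $p_2<N$ and also when $p^*\equiv+\infty$, but in the borderline regime one should be a bit careful about what $L^{p^*(\cdot)}$ means. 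Neither point is a genuine obstruction, and in the context of this paper (where (H1) is always in force) your argument is sound.
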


In order to have better properties of these spaces, we need more
hypotheses on the regularity of $p$. For expample, it was proved
in  \cite{D}, Theorem 3.7, that if one assumes that $\partial\O$
is Lipschitz and   $p$ is log-H\"{o}lder continuous then
$C^{\infty}(\bar{\Omega})$ is dense in $W^{1,p(\cdot)}(\Omega),$
see also  \cite{Di,DHN, Fanx2,KR,Sam1}. The local log-Hl\"{o}der
condition was first used in the variable exponent context in
\cite{Z}.

\medskip

We now state two Sobolev embedding Theorems  (for the proofs see
\cite{D3} and Corollary 2.4 in \cite{Fanx}, respectively).

\begin{teo}\label{embed}
Let $\Omega$ be a Lipschitz domain. Let $p:\Omega\to [1,\infty)$
be log-H\"{o}lder continuous. Then the embedding
$\wp\hookrightarrow \lpe$ is continuous.

\end{teo}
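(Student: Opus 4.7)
The embedding $\wp\hookrightarrow\lpe$ for a Lipschitz domain is the variable-exponent analogue of the classical Sobolev embedding. The plan is to reduce it to an estimate for the Riesz potential on $L^{p(\cdot)}(\R^N)$ and to use the log-Hölder hypothesis to handle the variation of the exponent.

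First, I would extend the function. Since $\Omega$ is Lipschitz, I would construct an extension operator $E:\wp\to W^{1,p(\cdot)}(\R^N)$ that is bounded with norm depending only on $\Omega$ and the log-Hölder constant of $p$. The exponent $p$ is first extended to all of $\R^N$ keeping the log-Hölder property (e.g.\ by a standard reflection/McShane extension), so that $p^*$ extends in the obvious way. After this reduction and after multiplying by a suitable cut-off, it suffices to prove the embedding for functions $u\in W^{1,p(\cdot)}(\R^N)$ with compact support, in which case the pointwise representation
\begin{equation*}
|u(x)| \le C\int_{\R^N}\frac{|\nabla u(y)|}{|x-y|^{N-1}}\,dy = C\, I_1(|\nabla u|)(x)
\end{equation*}
holds, where $I_1$ is the first-order Riesz potential.

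The core of the argument is then the boundedness $I_1:L^{p(\cdot)}(\R^N)\to L^{p^*(\cdot)}(\R^N)$. For this I would follow Hedberg's trick adapted to variable exponents: split the integral defining $I_1 f(x)$ into the ball $B_\delta(x)$ and its complement. The local part is bounded, up to a constant depending on $\delta$, by $\delta\, Mf(x)$, where $M$ is the Hardy-Littlewood maximal operator. For the nonlocal part, Hölder's inequality in variable exponent form gives a bound of the form $C\,\delta^{-N/p^*(x)}\|f\|_{p(\cdot)}$; here the log-Hölder hypothesis is essential, because it is exactly what makes $|x-y|^{p(x)-p(y)}$ uniformly bounded for $|x-y|\le 1$ and thus lets one replace $p(y)$ by $p(x)$ in the estimate. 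Optimizing in $\delta$ yields the Hedberg pointwise estimate
\begin{equation*}
I_1 f(x) \le C\,(Mf(x))^{p(x)/p^*(x)}\,\|f\|_{p(\cdot)}^{1-p(x)/p^*(x)}
\end{equation*}
(for $f$ with $\|f\|_{p(\cdot)}\le 1$, after a homogeneity normalization).

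Finally, I would invoke Diening's theorem: under log-Hölder continuity, $M$ is bounded on $L^{p(\cdot)}(\R^N)$. Substituting into the Hedberg estimate and computing the modular $\varrho_{p^*(\cdot)}(I_1 f)$ via Proposition \ref{equi} gives $\|I_1 f\|_{p^*(\cdot)}\le C\|f\|_{p(\cdot)}$, hence $\|u\|_{p^*(\cdot)}\le C\|\nabla u\|_{p(\cdot)}$ after extending back to $\Omega$. Combined with $\|u\|_{p(\cdot)}\le\|u\|_{p^*(\cdot)}|\Omega|^{\cdot}$ via Theorem \ref{imb}, this yields the continuous embedding. The main obstacle is the boundedness of the maximal operator on $L^{p(\cdot)}$; this is precisely where the log-Hölder continuity of $p$ is indispensable and is the deep part of the argument (Diening's theorem), while the rest is a careful but essentially classical computation.
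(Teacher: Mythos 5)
The paper does not prove this theorem itself; it cites Diening's paper \cite{D3}, and your sketch (extension of $u$ and of the exponent, reduction to the Riesz potential $I_1$, Hedberg's pointwise estimate, and Diening's boundedness of the maximal operator on $L^{p(\cdot)}$ under log-H\"older continuity) is precisely the argument of that reference, so you are following essentially the same route as the paper's source. The only caveat is that the theorem as stated allows $p^-=1$ and $p(x)\ge N$, while your argument as written needs $p^->1$ (for the maximal function) and $p^+<N$ (for the exponent $N/p^*(x)$ in the Hedberg optimization); these edge cases require separate, standard treatment, and in the paper's actual application hypothesis (H1) guarantees $p_1>1$ anyway.
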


\begin{teo}\label{traza}
Let $\Omega$ be an open bounded   domain with Lipschitz boundary.
Suppose that $p\in C^0(\overline{\O})$ with $p_1>1$. If $r\in
C^0(\partial\O)$ satisfies the condition $ 1\leq r(x)<p_* (x)$ for
all $x\in\partial\O,$ then there is a compact boundary trace
embedding $\wp \hookrightarrow L^{r(\cdot)}(\partial\O)$.
\end{teo}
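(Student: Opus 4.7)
The plan is to establish the compact trace embedding by localization to boundary charts and reduction to the classical Rellich--Kondrachov theorem with constant exponent, exploiting the strict subcriticality $r < p_*$ together with continuity of $p$ and $r$ to absorb the variability of the exponent.

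First, I would cover $\partial\O$ by finitely many open sets $\{U_i\}_{i=1}^M$ on which $\partial\O$ can be flattened by a bi-Lipschitz chart, with a subordinate partition of unity $\{\phi_i\}$. Since $r(x)<p_*(x)$ with both functions continuous on the compact set $\partial\O$, choose $\delta>0$ such that $r(x)+2\delta<p_*(x)$ for all $x\in\partial\O$. Using the uniform continuity of $p$ on $\overline{\O}$, refine the cover so that the oscillation of $p$ on each $U_i$ is smaller than a prescribed $\eta>0$; writing $p_-^i:=\inf_{U_i}p$, the inequality $(p_-^i)_*\ge p_*(x)-C\eta$ holds on $\partial\O\cap U_i$ for a constant $C$ depending only on $N$ and $p_1$. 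Taking $\eta$ small enough gives
\[
\sup_{\partial\O\cap U_i} r \;<\; (p_-^i)_* \quad\mbox{for every}\; i.
\]

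Second, on each patch, Theorem \ref{imb} provides the continuous embedding $W^{1,p(\cdot)}(\O\cap U_i)\hookrightarrow W^{1,p_-^i}(\O\cap U_i)$, and the classical Rellich--Kondrachov compact trace theorem on Lipschitz subdomains furnishes a compact embedding $W^{1,p_-^i}(\O\cap U_i)\hookrightarrow L^{s_i}(\partial\O\cap U_i)$ for any constant $s_i<(p_-^i)_*$. I pick $s_i$ strictly between $\sup_{\partial\O\cap U_i} r$ and $(p_-^i)_*$; then $L^{s_i}(\partial\O\cap U_i)\hookrightarrow L^{r(\cdot)}(\partial\O\cap U_i)$ continuously, since the constant exponent dominates $r(\cdot)$ pointwise.

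Third, for a sequence $\{u_n\}$ bounded in $\wp$, reflexivity (Theorem \ref{ref}) yields a weakly convergent subsequence $u_n\rightharpoonup u$. A finite diagonal extraction across the $M$ patches produces a further subsequence converging strongly in every $L^{s_i}(\partial\O\cap U_i)$, hence in $L^{r(\cdot)}(\partial\O\cap U_i)$, and assembly via property (6) of Proposition \ref{equi} applied to the partition of unity gives $u_n\to u$ strongly in $\lrb$. The main obstacle is the first step: balancing the oscillation of $p$ against the gap $p_*-r$ requires care when $p_1<N$, since $(p_-^i)_*$ depends nonlinearly on $p_-^i$ (the case $p_1\ge N$ is trivial, as $p_*\equiv+\infty$ and any $s_i$ is admissible). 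Note that continuity of $p$ and $r$ alone suffices here because we stay strictly below the critical exponent and therefore never need the sharper log-Hölder estimate used at criticality, while the hypothesis $p_1>1$ is needed exclusively to invoke reflexivity.
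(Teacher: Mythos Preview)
The paper does not prove Theorem~\ref{traza}; it is listed among the preliminary results with the proof delegated to Corollary~2.4 of \cite{Fanx}. So there is no argument in the paper to compare against.

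Your localization strategy---freezing the exponent on small boundary patches, invoking the constant-exponent compact trace embedding, and reassembling---is the standard route to such results and is essentially how Fan proceeds. The outline is sound, and you have correctly isolated the only genuine technical point: choosing the patch size so that $(p_-^i)_*$ remains above $\sup_{\partial\O\cap U_i} r$. Two minor remarks. First, the partition of unity $\{\phi_i\}$ you introduce is never actually used; restriction of $u_n$ to $\O\cap U_i$ already yields the local $W^{1,p_-^i}$ bounds, and the reassembly needs only the finite cover. Second, the appeal to Proposition~\ref{equi}(6) is a slight abuse, since that item is stated for open subsets of $\O$ rather than pieces of $\partial\O$; the boundary analogue is of course immediate from the definition of the Luxemburg norm, but you should say so. Neither point affects the correctness of the argument.
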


Let $\Gamma_D\subset \partial\O$, and $p$ be log-H\"{o}lder. We
define the space $\wpgama$ as the closure of the space $\{\varphi
\in C^{\infty}(\overline{\Omega}): \varphi=0 \mbox{ on }
\Gamma_D\}$ in $\wp$.

\medskip
The following proposition is crucial in order to prove the main
result of this paper.
\begin{prop}\label{papa}
Let $p:\O\to[1,\infty)$ be $\log$-H\"{o}lder continuous and
bounded. Let $\alpha>0,$ $D\subset \O$ and $h=\mbox{diam}(D)$ then:
\begin{enumerate}
\item There exists a constants $C$ independent of $h$ such that
\begin{equation}\label{epapa}
    h^{\alpha(p(x)-p(y))}\le C \quad \forall x,y\in \overline{D};
\end{equation}
\item If $A\ge h^\alpha$ then $A^{p(x)}\le C A^{p(y)}$ for all
$x,y\in \overline{D}$ such that $p(x)\le p(y)$.
\end{enumerate}
\end{prop}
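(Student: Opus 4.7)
The plan is to establish (1) first by a standard log-Hölder computation and then to derive (2) as a direct corollary.

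For part (1), I would split into two regimes according to the size of $h$. If $h\ge 1$, then using only the bound $p_1\le p(x),p(y)\le p_2$ one has $h^{\alpha(p(x)-p(y))}\le h^{\alpha(p_2-p_1)}\le \diam(\Omega)^{\alpha(p_2-p_1)}$, and this is the trivial case. The interesting regime is $h<1$. There, if $p(x)\ge p(y)$ the exponent is nonnegative and $h^{\alpha(p(x)-p(y))}\le 1$, so without loss of generality I assume $p(y)>p(x)$ and write
\[
h^{\alpha(p(x)-p(y))} \;=\; \left(\tfrac{1}{h}\right)^{\alpha(p(y)-p(x))}.
\]
Since $x,y\in\overline D$ I have $|x-y|\le h$, so the log-Hölder hypothesis (H1) gives
\[
p(y)-p(x)\;\le\;\frac{C_{log}}{\log(e+1/|x-y|)}\;\le\;\frac{C_{log}}{\log(e+1/h)}.
\]
Taking logarithms and using $\log(1/h)\le \log(e+1/h)$ (which holds for every $h>0$) yields
\[
\log\Big(h^{\alpha(p(x)-p(y))}\Big)\;\le\;\alpha C_{log}\,\frac{\log(1/h)}{\log(e+1/h)}\;\le\;\alpha C_{log},
\]
so $h^{\alpha(p(x)-p(y))}\le e^{\alpha C_{log}}$. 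Taking $C=\max\{e^{\alpha C_{log}},\,\diam(\Omega)^{\alpha(p_2-p_1)}\}$ concludes (1).

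For part (2), I would reduce to (1). Assume $p(x)\le p(y)$. If $A\ge 1$, then $A^{p(x)}\le A^{p(y)}$ and there is nothing to prove. If $A<1$, I write
\[
\frac{A^{p(x)}}{A^{p(y)}}\;=\;A^{-(p(y)-p(x))}\;=\;\left(\tfrac{1}{A}\right)^{p(y)-p(x)}.
\]
Since $A\ge h^\alpha$, I have $1/A\le h^{-\alpha}$, so
\[
\left(\tfrac{1}{A}\right)^{p(y)-p(x)}\;\le\; h^{-\alpha(p(y)-p(x))}\;=\;h^{\alpha(p(x)-p(y))},
\]
which is bounded by the constant in (1). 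This gives (2) with the same constant.

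The only genuinely delicate step is the passage $\log(1/h)/\log(e+1/h)\le 1$ combined with the log-Hölder modulus, which is the whole point of the condition (H1); every other step is bookkeeping. The sign discussion in both parts is what makes the statement slightly asymmetric, but it is handled cleanly by the WLOG reduction and the explicit reciprocal rewriting above.
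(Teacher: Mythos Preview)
Your proof is correct and follows essentially the same route as the paper's: the case split on $h\ge 1$ versus $h<1$ together with the log-H\"older bound $p(y)-p(x)\le C_{log}/\log(e+1/h)$ for part (1), and a reduction of part (2) to part (1). The only cosmetic difference is that for (2) the paper writes the single identity $A^{p(x)}=A^{p(y)}(A/h^\alpha)^{p(x)-p(y)}h^{\alpha(p(x)-p(y))}$ and observes that $(A/h^\alpha)^{p(x)-p(y)}\le 1$, whereas you split into the subcases $A\ge 1$ and $A<1$; both arguments are equivalent.
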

\begin{proof}
    Let $x, y\in \overline{D}$. If $p(x)\geq p(y)$ or $h\ge 1$ the result follows since $\O$ is bounded. If $p(x)\leq p(y)$ and $h<1$, using that $p$ is $\log$-H\"{o}lder, we have
$$
p(y)-p(x)\leq \frac{C}{\log\left(e+\di\frac{1}{|x-y|}\right)}\leq
\frac{C}{\log\left(e+\di\frac{1}{h}\right)}.
$$
Then, we get \eqref{epapa}.

By \eqref{epapa} and as $A\ge h^\alpha,$ we have that for all
$x,y\in\overline{D}$ such that $p(x)\le p(y),$
$$
A^{p(x)}=A^{p(y)}\left(\frac{A}{h^\alpha}\right)^{p(x)-p(y)}h^{\alpha(p(x)-p(y))}\le
CA^{p(y)}.
$$
\end{proof}

\section{The mesh $\th$ and properties of $\wpt$}
\label{appA2}

In this section we give some definitions and properties related to
the mesh and to the Broken Sobolev Space.

\begin{hyp}\label{omega}
Let $\O$ be a polygonal Lipschitz domain and $(\th)_{h\in(0,1]}$
be a family of partitions of $\overline{\O}$ into polyhedral
elements. We assume that there exist a finite number of reference
polyhedral $\hat{\kappa}_1,...,\hat{\kappa}_r$ such that for all
$\kappa\in \th$ there exists an invertible affine map $F_{\kappa}$
such that, $\kappa=F_{\kappa}(\hat{\kappa}_i)$. We assume that
each $\kappa\in\th$ is closed and that $diam(\kappa)\leq h$ for
all $\kappa\in \th$.
\end{hyp}

Now we give some notation,
$$
\begin{aligned}\mathcal{E}_h&:=\{\kappa\cap\kappa': \dim_H(\kappa\cap\kappa')=N-1\}
\cup\{\kappa\cap\partial\O: \dim_H(\kappa\cap\partial\O)=N-1\},\\
\Gamma_{int}&:=\bigcup\{ e\in\eh: \dim_H(e\cap
\partial\O)<N-1\}.\end{aligned}$$

\medskip

$\nh$ is the set of nodes of $\th$. For every $z\in\nh$ and
$e\in\mathcal{E}_h$ we define,
$$\begin{aligned}
\di{T}_z&:=\bigcup\{{\kappa}\in\th\colon z\in{\kappa}\},\quad
{T}_{\kappa}:=\bigcup\{{{T}_z}\colon z\in{\kappa}\},\quad
{T}_{e}:=\bigcup\{{{T}_{\kappa}}\colon e\in{\kappa}\},
\\
h_{\kappa}&:=\mbox{diam}(\kappa),\quad \quad
h_{z}:=\mbox{diam}(T_z)
\quad \quad h_{e}:=\mbox{diam}(e),\\
p^{\kappa}_-&:=\ess\inf_{x\in\kappa} p(x)\quad\quad
p^{\kappa}_+:=\ess\sup_{x\in\kappa} p(x),\quad
p^{e}_-:=\ess\inf_{x\in e} p(x) \mbox{ and }
p^{e}_+:=\ess\sup_{x\in e} p(x).
\end{aligned}$$

\medskip

We assume that the mesh satisfies the following hypotheses,

\begin{hyp}\label{mesh}  The family of partitions $(\th)_{h\in(0,1]}$
    satisfies the Hypothesis \ref{omega} and
\begin{enumerate}
\item[(a)] There  exist positive constants  $C_1$ and $C_2$,
independent of $h$, such that for each element $\kappa\in\th$
$$
C_1h_{\kappa}^N\le|\kappa|\le C_2 h_\kappa^N.
$$
\item[(b)]There exists a constant $C_1>0$ such that for all $h\in
(0,1]$ and for all face
    $e\in\eh$ there exists a point $x_e\in e$ and a radius
    $\rho_e\geq C_1 \mbox{diam}(e)$ such that $B_{\rho_e}(x_e)\cap A_e\subset e$, where
    $A_e$ is the affine hyperplane spanned by $e$.
    Moreover, there are positive constants such that
        $$ch_{\kappa}\leq h_e \leq C h_{\kappa}, \quad c h_{\kappa'}\leq h_e\leq C h_{\kappa'}$$
        where $e=\kappa\cap\kappa'$.
\end{enumerate}

\end{hyp}

We use the notation $\sim$ to compare quantities which differ only
up to positive constants that do not depend on the local or global
mesh size or on any function which appears in the estimate.

\begin{remark}\label{cardinal}
By the regularity assumption of the mesh we have the following,
$$
\sharp\{z\in\mathcal{N}_h\colon z\in\kappa\}\sim 1, \quad
\quad \sharp \{\kappa\in\mathcal{T}_h\colon\kappa\subset T_z\}
\sim 1,$$
$$\sharp \{\kappa'\in\mathcal{T}_h\colon \kappa'\subset T_{\kappa}\} \sim 1, \quad
\sharp \{e\in\mathcal{E}_h\colon e\subset T_z\} \sim 1 \quad
\mbox{and}\quad \sharp \{ e\in\mathcal{E}_h\colon e\subset
T_{\kappa}\} \sim 1.$$
\end{remark}

\begin{remark}\label{comp}
 As a consequence,
 we have  that
 $\mbox{diam}(T_{\kappa})\sim h_{\kappa}$ and for each $z\in\kappa$ and $e\subset \partial \kappa$,
 $h_z\sim h_{\kappa}$ and $h_e \sim h_{\kappa}$. See the discussion on Section 4.2 in \cite{BO}.
\end{remark}

\begin{remark}\label{comp2}
By Proposition \ref{papa},  we also have that for each edge
 $e\subset \partial \kappa$,
 $h_{\kappa}^{p(x)}\sim h_{e}^{p(y)}$ for any $x,y\in \kappa$. We will  replace $p_-^{\kappa},p^{e}_-$ by $p_-$
 and $p^{\kappa}_+,p^{e}_+$ by $p_+$ when no confusion can arise.
\end{remark}
\medskip

Now, we introduce  the finite element spaces associated with
$\th.$ We define the variable broken Sobolev space as
$$
W^{1,p(\cdot)}(\th):=\{u\in L^1(\Omega)\colon u|_{\kappa}\in
W^{1,p(\cdot)}(\kappa) \mbox{ for all } \kappa \in\th\},
$$
and the subspaces
$$
U^k(\th):=\{u\in C(\O)\colon u|_{\kappa}\in P^k \mbox{ for all }
\kappa \in\th\},
$$

$$
S^k(\th):=\{u\in L^1(\Omega)\colon u|_{\kappa}\in P^k \mbox{ for
all } \kappa \in\th\}
$$
where $P^k$ is the space of polynomials functions of degree at
most $k\ge1.$

\medskip
We also define, for any $\kappa\in\th,$   the space
\begin{align*}
W^{1,p(\cdot)}(T_\kappa)&:=\{u|_{T_\kappa} \colon  u \in
W^{1,p(\cdot)}(\th)\},\
\end{align*}
and in the same manner we define the spaces $W^{1,p(\cdot)}(T_z)$
and  $W^{1,p(\cdot)}(T_e)$, for any
 $z\in{\N}_h $ and $e\in \mathcal{E}_h$ .

\medskip
For each face $e \in\mathcal{E}_h,$  $e\subset\Gamma_{int}$ we
denote by $\kappa^+$ and $\kappa^-$ its neighboring elements. We
write $\nu^+,\nu^-$ to denote the outward normal unit vectors to
the boundaries $\partial\kappa^{\pm}$, respectively. The jump of a
function $u\in\wpt$ and the average of a vector-valued function
$\phi\in(\wpt)^N,$ with traces $u^{\pm},$ $\phi^{\pm}$ from
$k^{\pm}$ are, respectively, defined as the vectors
$$
    \[u\]:=u^+ \nu^++u^- \nu^- \quad\mbox{ and }\quad
    \{\phi\}:= \frac{\phi^++ \phi^-}{2}.
$$


Let ${\bf h}:\partial\Omega\cup \Gamma_{int}\to\R$ a piecewise
constant function define by
$$
{\bf h}(x)=
    diam(e)  \mbox{ if }  x\in e,
$$
 where  $e\in \mathcal{E}_h$.

We consider the following seminorms on $W^{1,p(\cdot)}(\th)$,
$$
|u|_{W^{1,p(\cdot)}(\th)}=\|\nabla u\|_{\lp}+ \|\[u\]
{\bf{h}}^{\frac{-1}{p'(x)}}\|_{L^{p(\cdot)}(\Gamma_{int})},
$$
$$
|u|_{W_D^{1,p(\cdot)}(\th)}=|u|_{W^{1,p(\cdot)}(\th)}+ \| u
{\bf{h}}^{\frac{-1}{p'(x)}}\|_{L^{p(\cdot)}(\Gamma_{D})},
$$
and the following local seminorm
\begin{align*}
|u|_{W^{1,p(\cdot)}(T_{\kappa})}&=\|\nabla
u\|_{L^{p(\cdot)}(T_{\kappa})}+ \sum_{e\subset T_{\kappa}}\|
\[u\]{\bf{h}}^{\frac{-1}{p'(x)}}\|_{L^{p(\cdot)}(e)},
\end{align*}
for  any $\kappa\in\th$. Similarly, we define  the seminorms
$|u|_{W^{1,p(\cdot)}(T_{z})}$ and $|u|_{W^{1,p(\cdot)}(T_{e})}$
for  any
 $z\in{\N}_h $ and $e\in \mathcal{E}_h$ .

\begin{lema}\label{cotaD}
For all $p\colon[1,\infty)\to \R$, there exist a constant $C$,
independent of $h$ such that,
$$ |Du|(\O)\leq C|u|_{\wpt}\quad \forall u\in\wpt,\ \forall h\in (0,1].$$
\end{lema}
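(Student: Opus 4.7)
\emph{Proof sketch.} The plan is to view $u\in\wpt$ as a function of bounded variation whose distributional gradient decomposes into the elementwise gradient plus a singular part carried by $\gi$, then bound each piece via the variable exponent H\"older inequality.

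First I would identify the distributional gradient. For $\phi\in C_c^1(\O;\R^N)$ with $|\phi|\le 1$, elementwise integration by parts gives
$$\int_\O u\,\div\phi\,dx=-\int_\O\nabla u\cdot\phi\,dx+\sum_{e\in\gi}\int_e\[u\]\cdot\phi\,dS,$$
the boundary faces dropping out since $\phi$ has compact support. Taking the supremum over admissible $\phi$ yields
$$|Du|(\O)\le\int_\O|\nabla u|\,dx+\int_\gi|\[u\]|\,dS,$$
so it suffices to bound each of the two terms by $C|u|_{\wpt}$ with $C$ independent of $h$.

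For the volume term, the generalized H\"older inequality in $\lp$ and Proposition \ref{equi}(5) give
$$\int_\O|\nabla u|\,dx\le C\|\nabla u\|_{\lp}\|1\|_{\lpp}\le C'\|\nabla u\|_{\lp}.$$
For the jump term, I would split $|\[u\]|=(|\[u\]|\,{\bf h}^{-1/p'(x)})\cdot{\bf h}^{1/p'(x)}$ and apply H\"older on $\gi$ to obtain
$$\int_\gi|\[u\]|\,dS\le C\,\|\[u\]\,{\bf h}^{-1/p'(x)}\|_{\lpi}\,\|{\bf h}^{1/p'(x)}\|_{\lppi}.$$

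The crux is to show that $\|{\bf h}^{1/p'(x)}\|_{\lppi}$ remains bounded as $h\to 0$. Passing to the Luxemburg-norm definition reduces this to estimating the modular $\int_\gi k^{-p'(x)}{\bf h}(x)\,dS$ for suitable $k>1$. Mesh regularity (Hypothesis \ref{mesh}) gives the scaling $h_e\,\H(e)\sim|\kappa|$ for any neighbor $\kappa$ of $e$, and together with the fact that each element has $O(1)$ faces,
$$\int_\gi {\bf h}(x)\,dS\lesssim\sum_{\kappa\in\th}|\kappa|=|\O|.$$
Choosing $k$ large enough (depending only on $|\O|$, the shape regularity constants, and $p_2$) then forces the modular below $1$, whence the desired uniform bound.

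The main obstacle is precisely this last point: because $\H(\gi)$ blows up as $h\to 0$, the weight ${\bf h}^{1/p'(x)}$ must absorb a growing surface measure, and this is only possible thanks to the exact mesh scaling $h_e\H(e)\sim|\kappa|$, which telescopes into the fixed quantity $|\O|$ and rescues the H\"older pairing.
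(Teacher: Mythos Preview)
Your proposal is correct and follows essentially the same route as the paper: decompose $|Du|(\O)$ into the elementwise gradient plus the interior jumps, then apply the variable-exponent H\"older inequality to each piece. The paper's proof is terse, citing only ``H\"older inequality, Proposition~\ref{equi}(5) and Hypothesis~\ref{mesh}'' for the jump term, whereas you spell out the point that~$\|{\bf h}^{1/p'(x)}\|_{\lppi}$ stays bounded uniformly in~$h$ via the mesh-scaling identity $\int_{\gi}{\bf h}\,dS\lesssim|\O|$; this is exactly the content hidden behind the paper's appeal to Hypothesis~\ref{mesh}.
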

\begin{proof}
For all $u\in\wpt,$ we have that
$$
|Du|(\O)\le\int_\O |\nabla u| \, dx + \int_{\Gamma_{int}} |\[u\]|
\, ds.
$$
Thus, by H\"older inequality, Proposition \ref{equi} (5)  and the
Hypothesis \ref{mesh}, there exists a constant $C$ depending only
of $|\O|,$ $p_1$ and $p_2$ such that
$$
|Du|(\O)\le C\left(\|\nabla u\|_{\lp}+
\|{\bf{h}}^{\frac{-1}{p'(x)}}\[u\]\|_{L^{p(\cdot)}(\Gamma_{int})}\right).
$$
The proof is now complete.
\end{proof}

\begin{lema}\label{lema21}
Let $(\th)_{h\in(0,1]}$ be a family of partitions of $\O.$ Then,
for each function \mbox{$p,q\colon\Omega\to [1,\infty)$}, there
exists a constant $C>0$ independent of $h$, such that for any
$\kappa\in\th$
$$\|u\|_{L^{p(\cdot)}(\kappa)}\leq C h_{\kappa}^{\frac{N}{p_+}-\frac{N}{q_-}}
\|u\|_{L^{q(\cdot)}(\kappa)} \quad \forall u\in S^k(\th), \;
\forall h\in(0,1].$$
\end{lema}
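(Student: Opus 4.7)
The plan is to reduce to the reference element and exploit both the finite-dimensional norm equivalence on $P^k$ and a simple modular comparison between variable and constant exponents. Fix $\kappa\in\th$ and $u\in S^k(\th)$, so that $u|_\kappa\in P^k$. Let $F_\kappa\colon\hat\kappa_i\to\kappa$ be the affine map given by Hypothesis \ref{omega}, set $\hat u=u\circ F_\kappa$, and recall that $|\det DF_\kappa|\sim h_\kappa^N$ by Hypothesis \ref{mesh}. I will chain three estimates whose product is the claimed bound.

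First, I would control $\|u\|_{L^{p(\cdot)}(\kappa)}$ by the sup norm: $\|u\|_{L^{p(\cdot)}(\kappa)}\leq\|u\|_{L^\infty(\kappa)}\|1\|_{L^{p(\cdot)}(\kappa)}$. Proposition \ref{equi}(5) applied on $\kappa$ with the local bounds $p_-^\kappa\leq p(x)\leq p_+^\kappa$ gives $\|1\|_{L^{p(\cdot)}(\kappa)}\leq\max\bigl(|\kappa|^{1/p_-^\kappa},|\kappa|^{1/p_+^\kappa}\bigr)$; since $|\kappa|\sim h_\kappa^N\leq 1$ (using $h_\kappa\leq 1$), this maximum equals $|\kappa|^{1/p_+^\kappa}\sim h_\kappa^{N/p_+^\kappa}$, yielding $\|1\|_{L^{p(\cdot)}(\kappa)}\leq C\,h_\kappa^{N/p_+^\kappa}$. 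Second, since $P^k|_{\hat\kappa_i}$ is finite-dimensional, all norms are equivalent, so $\|\hat u\|_{L^\infty(\hat\kappa_i)}\leq C\|\hat u\|_{L^1(\hat\kappa_i)}$, and H\"older on the bounded reference element gives $\|\hat u\|_{L^1(\hat\kappa_i)}\leq C\|\hat u\|_{L^{q_-^\kappa}(\hat\kappa_i)}$ uniformly in $q_-^\kappa\geq 1$. Pulling back via $\|u\|_{L^\infty(\kappa)}=\|\hat u\|_{L^\infty(\hat\kappa_i)}$ and $\|u\|_{L^{q_-^\kappa}(\kappa)}\sim h_\kappa^{N/q_-^\kappa}\|\hat u\|_{L^{q_-^\kappa}(\hat\kappa_i)}$ produces $\|u\|_{L^\infty(\kappa)}\leq C\,h_\kappa^{-N/q_-^\kappa}\|u\|_{L^{q_-^\kappa}(\kappa)}$.

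Third, I would replace the constant exponent $q_-^\kappa$ by $q(\cdot)$ through a modular decomposition. Set $\lambda=\|u\|_{L^{q(\cdot)}(\kappa)}$; the case $\lambda=0$ is trivial, so assume $\lambda>0$. Proposition \ref{equi} gives $\int_\kappa|u/\lambda|^{q(x)}\,dx\leq 1$. On $\{|u/\lambda|\leq 1\}$ one has $|u/\lambda|^{q_-^\kappa}\leq 1$, while on $\{|u/\lambda|>1\}$ one has $|u/\lambda|^{q_-^\kappa}\leq|u/\lambda|^{q(x)}$ because $q(x)\geq q_-^\kappa$; adding gives $\int_\kappa|u/\lambda|^{q_-^\kappa}\,dx\leq 1+|\Omega|$, and hence $\|u\|_{L^{q_-^\kappa}(\kappa)}\leq C\|u\|_{L^{q(\cdot)}(\kappa)}$. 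Chaining the three estimates produces the desired inequality $\|u\|_{L^{p(\cdot)}(\kappa)}\leq C\,h_\kappa^{N/p_+^\kappa-N/q_-^\kappa}\|u\|_{L^{q(\cdot)}(\kappa)}$.

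The only point requiring some care is uniformity of the constants. The norm equivalence on $P^k$ in the second step depends on the shape of the reference element, but Hypothesis \ref{omega} supplies only finitely many reference shapes $\hat\kappa_1,\dots,\hat\kappa_r$, so taking the maximum suffices; the H\"older factor $|\hat\kappa_i|^{1-1/q_-^\kappa}$ is bounded above uniformly in $q_-^\kappa\geq 1$ since $|\hat\kappa_i|$ is bounded. Note that no step invokes log-H\"older continuity of $p$ or $q$, which is consistent with the generality of the statement.
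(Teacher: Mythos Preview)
Your argument is correct, and it rests on the same three ingredients as the paper's proof: the affine map to a reference element, the finite-dimensional norm equivalence on $P^k$, and the embedding $L^{q(\cdot)}(\kappa)\hookrightarrow L^{q_-^\kappa}(\kappa)$. The organization is slightly different, however. The paper pulls the variable exponent back to the reference element, obtaining $\|u\|_{L^{p(\cdot)}(\kappa)}\le (Ch_\kappa^N)^{1/p_+}\|u\circ F_\kappa\|_{L^{p\circ F_\kappa(\cdot)}(\hat\kappa)}$ and the analogous lower bound for $q$, and then compares the variable-exponent norms on $\hat\kappa$ via Theorem~\ref{imb} and the $P^k$ equivalence. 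Your route instead factors through $L^\infty$ at the outset, $\|u\|_{L^{p(\cdot)}(\kappa)}\le\|u\|_{L^\infty(\kappa)}\|1\|_{L^{p(\cdot)}(\kappa)}$, which isolates the $h_\kappa^{N/p_+}$ factor without ever transporting $p(\cdot)$ to the reference element; the variable-to-constant comparison for $q$ is then done directly on $\kappa$ by your modular splitting (which is essentially a hands-on proof of the special case of Theorem~\ref{imb} needed). This is a minor but clean simplification: it keeps the variable exponents on the physical element and only works with constant exponents on $\hat\kappa_i$. One small cosmetic point: when you invoke $|\kappa|\sim h_\kappa^N\le 1$ to identify the maximum in Proposition~\ref{equi}(5), note that Hypothesis~\ref{mesh} only gives $|\kappa|\le C_2 h_\kappa^N$, so $|\kappa|$ could exceed $1$; but since $h_\kappa\le 1$, both $|\kappa|^{1/p_-^\kappa}$ and $|\kappa|^{1/p_+^\kappa}$ are bounded by $C\,h_\kappa^{N/p_+^\kappa}$ with a constant depending only on $C_2,p_1,p_2$, so the conclusion stands.
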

\begin{proof}
Let $\kappa\in\th$, $\hat{\kappa}$ its corresponding reference
element and $F_{\kappa} \colon\hat{\kappa}\to {\kappa}$ the
associated affine mapping. We set $J=|\det(DF_{\kappa})|$. Using
Hypothesis \ref{mesh}, we have $C^{-1}h_{\kappa}^N\leq J\leq C
h_{\kappa}^N$, for some constant $C$ which is independent of
$\kappa$. Let $K>0$, then we have
$$\int_{\kappa} \left(\frac{|u|}{K}\right)^{p(x)}\, dx
=\int_{\hat{\kappa}} \left(\frac{|u\circ
F_{\kappa}|}{K}\right)^{p\circ F_{\kappa}(x)}J\, dx\leq C
h_{\kappa}^N\int_{\hat\kappa}\left(\frac{|u\circ
F_{\kappa}|}{K}\right)^{p\circ F_{\kappa}(x)}\, dx. $$ Thus,
$$\| (Ch_{\kappa}^N)^{-1/{p(x)}}u\|_{L^{p(\cdot)}(\kappa) }\le
\|u\circ F_{\kappa}\|_{L^{p\circ
F_{\kappa}(\cdot)}(\hat{\kappa})}.$$ Using that $h_{\kappa}\ll 1$,
we obtain

\begin{equation}\label{uf1}\|u\|_{L^{p(\cdot)}(\kappa) }\le
(Ch_{\kappa}^N)^{1/p_+} \|u\circ F_{\kappa}\|_{L^{p\circ
F_{\kappa}(\cdot)}(\hat{\kappa})}.
\end{equation}

Similarly, we have
\begin{equation}\label{uf2}
\|u\circ F_\kappa\|_{L^{q\circ F_\kappa(\cdot)}(\hat{\kappa}) }\le
(Ch_{\kappa}^{-N})^{1/q_-}
\|u\|_{L^{q(\cdot)}({\kappa})}.\end{equation}

As on a finite dimensional space all the norms are equivalent, we
have that there exists a constant $\bar{C}$ depending only on $N$
and $k$ such that,
\begin{equation}\label{eqnorm}
\|u\circ F_{\kappa}\|_{L^{p\circ
F_{\kappa}(\cdot)}(\hat{\kappa})}\leq C \|u\circ
F_{\kappa}\|_{L^{p_2}(\hat{\kappa})}\leq C \|u\circ
F_{\kappa}\|_{L^{q_1}(\hat{\kappa})}\leq \bar{C} \|u\circ
F\|_{L^{q(\cdot)}(\hat{\kappa})},\end{equation} where in the first
and last inequalities we are using Theorem \ref{imb}.

Finally, by \eqref{uf1}--\eqref{eqnorm} we arrive at the desired
result.
\end{proof}

\begin{lema}\label{local}
If $p$ is $\log$-H\"{o}lder continuous then, for any $e\in
\mathcal{E}_h\cap\partial\Omega$ and $z\in \mathcal{N}_h\cap e$
 we have that,
\begin{equation}\label{facil}
\|u\|_{L^{^p(\cdot)}(e)} \leq C  h_{z}^{-\frac{1}{p_{-}}}
\|u\|_{L^{p(\cdot)}(T_z)} \quad \forall u\in S^k(\th),
\end{equation}
where $C=C(p_1,p_2,N,\Omega,C_{log})$.

\end{lema}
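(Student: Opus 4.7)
The approach is a scaling argument to the reference element combined with norm equivalences on the finite-dimensional polynomial space $P^k$; the log-Hölder regularity of $p$ is used to absorb the $x$-dependent Jacobian powers that arise from the change of variables.

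Since $e \subset \partial\Omega$ there is a unique $\kappa \in \th$ with $z \in \kappa$ and $e \subset \partial\kappa$; by Hypothesis \ref{mesh} and Remark \ref{comp}, $h_\kappa \sim h_z \sim h_e$ and $\kappa \subset T_z$. Let $F_\kappa \colon \hat\kappa \to \kappa$ be the associated affine map, $\hat u := u \circ F_\kappa$ and $\hat p := p \circ F_\kappa$. The central input is that, on $P^k|_{\hat\kappa}$, all Lebesgue norms $L^q$ with $q \in [p_1, p_2]$ are equivalent with constants depending only on $k, N, p_1, p_2$; combining this with Theorem \ref{imb} (which sandwiches the variable-exponent norm between $L^{p_1}$ and $L^{p_2}$) and the trace inequality on $\hat\kappa$ for polynomials yields
$$\|\hat u\|_{L^{\hat p(\cdot)}(\hat e)} \leq C\, \|\hat u\|_{L^{\hat p(\cdot)}(\hat\kappa)}$$
with $C = C(k, N, p_1, p_2)$.

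It then remains to transfer this bound back. Using $|\det DF_\kappa| \sim h_\kappa^N$ and the surface Jacobian $\sim h_\kappa^{N-1}$ in the modulars $\int |\cdot|^{p(x)}$, and applying Proposition \ref{equi}(2)(3) to pass between modular and Luxemburg norm, as in the proof of Lemma \ref{lema21}, I would obtain
$$\|u\|_{L^{p(\cdot)}(e)} \leq C \max\bigl\{h_\kappa^{(N-1)/p^{e}_-},\, h_\kappa^{(N-1)/p^{e}_+}\bigr\}\, \|\hat u\|_{L^{\hat p(\cdot)}(\hat e)}$$
together with an analogous bound controlling $\|\hat u\|_{L^{\hat p(\cdot)}(\hat\kappa)}$ by $\max\bigl\{h_\kappa^{-N/p^{\kappa}_-},\, h_\kappa^{-N/p^{\kappa}_+}\bigr\}\, \|u\|_{L^{p(\cdot)}(\kappa)}$. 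Chaining these with the reference inequality and the monotonicity $\|u\|_{L^{p(\cdot)}(\kappa)} \leq \|u\|_{L^{p(\cdot)}(T_z)}$ produces the desired estimate, modulo comparison of the various powers of $h_\kappa$ that appear.

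This last comparison is the main obstacle: without regularity on $p$, the quantities $h_\kappa^{1/p^{e}_-}$ and $h_\kappa^{1/p^{e}_+}$ need not be comparable, and $p^{e}_-$ need not equal $p^{\kappa}_-$. Here Proposition \ref{papa}, which encodes the log-Hölder hypothesis, is decisive: it shows $h_\kappa^{\alpha(p^{e}_+ - p^{e}_-)} \leq C$ uniformly in $h_\kappa \in (0,1]$, which collapses each max into a single power of $h_\kappa$, and a second application absorbs the discrepancy between $p^{e}_-$ and $p^{\kappa}_-$. Summing the resulting exponents leaves $h_\kappa^{(N-1)/p_- - N/p_-} = h_\kappa^{-1/p_-}$, after which $h_\kappa \sim h_z$ yields \eqref{facil}.
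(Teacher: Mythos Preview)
Your proposal is correct and follows essentially the same route as the paper: scale to the reference element, use the finite-dimensional norm equivalences on $P^k$ together with a trace-type inverse estimate on $\hat\kappa$ (the paper cites \cite[p.~837]{BO} for $\|\hat u\|_{L^1(\hat e)}\le C\|\hat u\|_{L^1(\hat\kappa)}$), scale back, and then invoke Proposition~\ref{papa} to collapse the exponent discrepancies. The only cosmetic difference is that the paper keeps the weight $h_\kappa^{1/p(x)}$ inside the Luxemburg norm and extracts $h_\kappa^{1/p_-}$ at the end via $h_\kappa^{1/p(x)}\ge h_\kappa^{1/p_-}$, whereas you bound by $\max\{h_\kappa^{\alpha/p_-},h_\kappa^{\alpha/p_+}\}$ and then collapse the max; under the log-H\"older hypothesis these are equivalent bookkeeping choices.
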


\begin{proof}
Let $\kappa\in\mathcal{T}_h$ such that $e\subset \kappa$. Let
$F_{\kappa}$ and $\hat{\kappa}$ be as in the proof of Lemma
\ref{lema21} and  let  $\hat{e}=F^{-1}_{\kappa}(e)$.

Then,
$$\int_{e} \left(\frac{|u(x)|}{k}\right)^{p(x)}\, dS\leq C h_{\kappa}^{N-1}\int_{
\hat{e}} \left(\frac{|u\circ F_{\kappa}(x)|}{k}\right)^{p\circ
F_{\kappa}(x)}\, dS.$$ Hence,
$$\Big\|(C^{-1}h_{\kappa})^{\frac{1}{p(x)}}\frac{u}{h_{\kappa}^{N/p(x)}}\Big\|_
{L^{p(\cdot)}(e)}\leq \|u\circ F_{\kappa}\|_{L^{p\circ
F_{\kappa}(\cdot)}(\hat{e})}.$$

By using Theorem \ref{imb} and that all the norms are equivalent,
we have
$$
\|u\circ F_{\kappa}\|_{L^{p\circ F_{\kappa}(\cdot)}(\hat{e})} \leq
C \|u\circ F_{\kappa}\|_{L^{p_2}(\hat{e})} \leq C \|u\circ
F_{\kappa}\|_{L^{1}(\hat{e})}.
$$
On the other hand, by the local inverse estimate in \cite[page 837
]{BO}, we have
$$
\|u\circ F_{\kappa}\|_{L^{1}(\hat{e})}\leq C \|u\circ
F_{\kappa}\|_{L^{1}(\hat{\kappa})}.$$ By using again Theorem
\ref{imb}, we obtain
$$
\|u\circ F_{\kappa}\|_{L^{1}(\hat{\kappa})} \leq C \|u\circ
F_{\kappa}\|_{L^{p\circ F_{\kappa}(\cdot)}(\hat{\kappa})}.
$$
By using all the inequalities and the definition of the Luxembourg
norm, we arrive at

$$\Big\|{h_{\kappa}}^{\frac{1}{p(x)}}\frac{u}{h_{\kappa}^{N/p(x)}}\Big\|_{L^{p(\cdot)}(e)}\leq C
\Big\|\frac{u}{h_{\kappa}^{N/p(x)}}\Big\|_{L^{p(\cdot)}({\kappa})}.$$
Finally, we obtain
$$
\|{h_{\kappa}}^{\frac{1}{p(x)}}u\|_ {L^{p(\cdot)}(e)}\leq C
h_{\kappa}^{\frac{N(p_--p_+)}{p_-p_+}}
\|u\|_{L^{p(\cdot)}({\kappa})},
$$

By Remark \ref{papa}, we get
$$
\|{h_{\kappa}}^{\frac{1}{p(x)}}u\|_ {L^{p(\cdot)}(e)}\leq C
e^{N\frac{C}{p_1^2}} \|u\|_{L^{p(\cdot)}({\kappa})}.
$$

Now, inequality  \eqref{facil} follows immediately using
Proposition \ref{equi}(3) and the fact that $h_z \sim h_{\kappa}$.
\end{proof}
\medskip

The next result establishes the existence of the local projector
operator (for the proof see Subsection 3.1 of \cite{BO}).

\begin{lema}\label{lema6}
For all $z\in{\N}_{h}$ there exists a linear map
$\pi_z:BV(\O)\to\R$ such that
$$
\|u-\pi_z(u)\|_{L^1({T}_z)}\le C h_z|Du|({T}_z) \quad \forall u\in
BV(\O)
$$
where $C$ is a constant independent of $h$ and $z.$
\end{lema}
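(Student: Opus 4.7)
The natural candidate for the linear map is the averaging operator
$$\pi_z(u):=\frac{1}{|T_z|}\int_{T_z} u\,dx,$$
which is clearly well defined and linear on $BV(\Omega)$. With this choice, the inequality reduces to a Poincaré–Wirtinger estimate for $BV$ functions on the patch $T_z$:
$$\|u-(u)_{T_z}\|_{L^1(T_z)}\le C\,h_z\,|Du|(T_z),$$
and the whole task is to show that the constant $C$ can be taken independent of $h$ and $z$.

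The plan is to obtain this by a scaling argument to a bounded family of reference patches. First I would rescale $T_z$ to unit size: set $F(\hat x)=h_z\hat x + x_z$ for some $x_z\in T_z$, and define $\hat T_z=F^{-1}(T_z)$ and $\hat u(\hat x)=u(F(\hat x))$. By shape regularity (Hypothesis \ref{mesh}) and Remark \ref{cardinal}, the rescaled patch $\hat T_z$ has diameter $\sim 1$, a uniformly bounded number of elements, and each of its elements is the image under an affine map with uniformly bounded Jacobians and inverse Jacobians of one of the finitely many reference polyhedra $\hat\kappa_1,\dots,\hat\kappa_r$. Hence the collection of possible rescaled patches is, up to affine equivalence, a uniformly shape-regular family with a uniform bound on the diameter, on the number of pieces, and on the Lipschitz character of the boundary.

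Next I would apply the classical $BV$ Poincaré–Wirtinger inequality on $\hat T_z$, which gives
$$\|\hat u-(\hat u)_{\hat T_z}\|_{L^1(\hat T_z)}\le \hat C\,|D\hat u|(\hat T_z),$$
with $\hat C$ uniform over the above finite-up-to-affine-equivalence family (one can check uniformity either by a standard compactness contradiction argument on each reference configuration, or by directly covering $\hat T_z$ with a bounded number of star-shaped domains of uniformly controlled geometry). Then I would undo the scaling, using that $dx=h_z^N\,d\hat x$ and that the total variation scales as $|Du|(T_z)=h_z^{N-1}|D\hat u|(\hat T_z)$, which yields precisely
$$\|u-(u)_{T_z}\|_{L^1(T_z)}=h_z^N\|\hat u-(\hat u)_{\hat T_z}\|_{L^1(\hat T_z)}\le \hat C\,h_z^N\,|D\hat u|(\hat T_z)= \hat C\,h_z\,|Du|(T_z).$$

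The main subtlety is the uniformity of the Poincaré–Wirtinger constant on the rescaled patches $\hat T_z$. The patches are unions of affine images of the fixed reference cells, so they are not themselves drawn from a finite list; however, shape-regularity and the bounded valence of each vertex confine them to a compact family of Lipschitz domains with uniformly controlled Lipschitz constants and diameters bounded away from $0$ and $\infty$. A standard compactness argument (or a chaining argument through the finitely many cells that make up $\hat T_z$, each of which is the affine image of one of the $\hat\kappa_i$) then delivers a uniform constant, which is exactly what is needed to conclude.
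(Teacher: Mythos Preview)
Your argument is correct: taking $\pi_z(u)=(u)_{T_z}$ and invoking the $BV$ Poincar\'e--Wirtinger inequality on the rescaled patch, with uniformity coming from shape-regularity and the bounded valence of each vertex, gives exactly the stated estimate, and your scaling computation is right. The paper does not give its own proof of this lemma but simply refers to Subsection~3.1 of \cite{BO}; the construction and argument there are essentially the one you wrote down.
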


\section{The reconstruction operator $Q_h$}\label{qh}
In many   Discontinuous Galerkin problems one uses a priori bounds
in order to prove the Poincar\'{e} inequality for the discrete
space. In order to prove these inequalities   it is required to
use a reconstruction operator. In this section we define, as in
\cite{BO}, a family of quasi-interpolant operators and prove some
error estimates depending on the mesh size. These results are
more general than the ones in \cite{BO}, because we  prove bounds
in the variable $p(x)-$ norm. On the other hand these results are
less general than the ones in \cite{BR} in the sense that they
only cover the case of the finite dimensional space
$\mathcal{S}^k(\th)$. This last restriction comes from the fact
that in Lemma \ref{lema21} we need to use the equivalence of the
norms in the space of polynomials.

In order to prove these error estimates we  strongly use
Proposition \ref{papa}.  This is the reason why we need $p$ to be
$\log$-H\"{o}lder continuous.

\medskip
Now, we define and study the reconstruction operator. For each
$h\in(0,1],$ let
$$Q_h\colon S^k(\th)\to W^{1,\infty}(\O)$$ be the linear
operator defined by
$$
Q_h(u)=\sum_{z\in{\N}_h}\pi_z(u)\lambda_z,
$$
where $\lambda_z$ is the standard $P^1$ nodal basis function
associated with the vertex $z$ on the mesh ${\th}$.

\medskip

In the next theorem, we give some local estimates of the
$L^{q(\cdot)}(\kappa)$ and $L^{q(\cdot)}(e)$  norms in terms of
the   ${W^{1,p(\cdot)}(T_{\kappa})}$ seminorm and  $h$.

\begin{teo}\label{tqh}
Let $p, q\colon\O\to [1,\infty)$ be $\log$-H\"{o}lder continuous
in $\overline{\O}$. Then, the operator $Q_h$ satisfies
\begin{align}
\label{tqh1}\|u-Q_h(u)\|_{L^{q(\cdot)}(\kappa)}\le& C
h_{\kappa}^{\frac{N}{q_-}-\frac{N}{p_-} +1
}|u|_{W^{1,p(\cdot)}(T_{\kappa})} \quad\quad\,\,\, \forall \kappa \in \th,\\
\label{tqh2}\|u-Q_h(u)\|_{L^{q(\cdot)}(e)}\le&
Ch_e^{\frac{N-1}{q_-}-\frac{N}{p_-}+1} |u|_{W^{1,p(\cdot)}(
T_{e})}
 \quad \forall e \in \mathcal{E}_h\cap\partial\O,\\
\label{tqh3}\|\nabla Q_h(u)\|_{L^{p(\cdot)}(\kappa)}\le&C
|u|_{W^{1,p(\cdot)}(T_{\kappa})} \qquad \qquad\quad\qquad \forall
\kappa \in \th,
\end{align}
for all $u\in  S^k(\th)$ where  $C$ is a constant independent of
$h.$
\end{teo}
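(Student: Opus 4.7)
The plan is to reduce all three estimates to a common local computation, starting from the partition-of-unity identity. On each $\kappa\in\th$, since $\sum_{z\in\kappa\cap\nh}\lambda_z\equiv1$ inside the element, one has
\begin{equation*}
u-Q_h(u)=\sum_{z\in\kappa\cap\nh}(u-\pi_z(u))\lambda_z,
\end{equation*}
so using $\|\lambda_z\|_\infty\le1$, Proposition \ref{equi}(6) and Remark \ref{cardinal}, it is enough to bound $\|u-\pi_z(u)\|_{L^{q(\cdot)}(\kappa)}$ for a single $z\in\kappa\cap\nh$. Since $u-\pi_z(u)$ is a polynomial of degree at most $k$ on $\kappa$, Lemma \ref{lema21} (with target exponent $q(\cdot)$ and source $1$) gives
\begin{equation*}
\|u-\pi_z(u)\|_{L^{q(\cdot)}(\kappa)}\le C\,h_\kappa^{N/q_+^\kappa-N}\,\|u-\pi_z(u)\|_{L^1(\kappa)},
\end{equation*}
and Proposition \ref{papa} applied to the log-H\"{o}lder function $1/q$ allows us to replace $h_\kappa^{N/q_+^\kappa}$ by $h_\kappa^{N/q_-^\kappa}$ up to a multiplicative constant independent of $h$.

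Next, Lemma \ref{lema6} yields $\|u-\pi_z(u)\|_{L^1(\kappa)}\le\|u-\pi_z(u)\|_{L^1(T_z)}\le C\,h_z\,|Du|(T_z)$, and the crux is then to establish the localized H\"{o}lder-type inequality
\begin{equation*}
|Du|(T_z)\le C\,h_z^{\,N-N/p_-}\,|u|_{W^{1,p(\cdot)}(T_z)}.
\end{equation*}
To prove this I would bound $|Du|(T_z)\le\int_{T_z}|\nabla u|\,dx+\sum_{e\subset T_z\cap\gi}\int_e|\[u\]|\,dS$ and apply H\"{o}lder's inequality in Musielak--Orlicz form together with Proposition \ref{equi}(5), which provides the scalings $\|1\|_{L^{p'(\cdot)}(T_z)}\le C\,h_z^{N-N/p_-}$ and $\|1\|_{L^{p'(\cdot)}(e)}\le C\,h_e^{(N-1)-(N-1)/p_-}$. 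The jump weight $\mathbf{h}^{-1/p'(\cdot)}$ built into $|u|_{W^{1,p(\cdot)}(T_z)}$ is absorbed via Proposition \ref{papa}, which gives $h_e^{-1/p'(x)}\sim h_e^{-(p_--1)/p_-}$ uniformly in $x\in e$. An elementary arithmetic check then shows that both the volume and the jump contributions scale as $h_z^{N-N/p_-}$, proving the claim; combining these steps and using $h_z\sim h_\kappa$ (Remark \ref{comp}) establishes \eqref{tqh1}.

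For the face estimate \eqref{tqh2}, I would insert Lemma \ref{local} before applying the $L^1$ inverse inequality, picking up a factor $h_z^{-1/q_-}$ that turns $N/q_-$ into $(N-1)/q_-$ in the final exponent, and then repeat the previous argument verbatim. For the gradient estimate \eqref{tqh3}, I use $\sum_{z\in\kappa\cap\nh}\nabla\lambda_z\equiv0$ on $\kappa$ to write, for any fixed $z_0\in\kappa\cap\nh$,
\begin{equation*}
\nabla Q_h(u)=\sum_{z\in\kappa\cap\nh}(\pi_z(u)-\pi_{z_0}(u))\nabla\lambda_z,
\end{equation*}
exploit $\|\nabla\lambda_z\|_\infty\le C h_\kappa^{-1}$, and control each difference $|\pi_z(u)-\pi_{z_0}(u)|$ by integrating over a single element $\kappa'\subset T_z\cap T_{z_0}$ and invoking Lemma \ref{lema6} twice to obtain $|\pi_z(u)-\pi_{z_0}(u)|\le C\,h_\kappa^{1-N}\,|Du|(T_\kappa)$; combined with the localized inequality above, this yields $\|\nabla Q_h(u)\|_{L^{p(\cdot)}(\kappa)}\le C\,|u|_{W^{1,p(\cdot)}(T_\kappa)}$.

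The main obstacle lies in the middle paragraph: the bookkeeping of the powers of $h$ in the H\"{o}lder step must align precisely with the weight $\mathbf{h}^{-1/p'(\cdot)}$ hidden in the discrete seminorm, and the log-H\"{o}lder continuity of $p$ (via Proposition \ref{papa}) is essential at several places to identify exponents like $p_+$ and $p_-$ up to harmless constants.
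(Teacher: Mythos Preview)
Your proposal is correct and follows essentially the same route as the paper: the partition-of-unity identity, Lemma~\ref{lema21} to pass from $L^{q(\cdot)}$ to $L^1$, Lemma~\ref{lema6} to reach $|Du|(T_z)$, and the weighted H\"older computation for the jump and volume terms are all exactly what the paper does, with Proposition~\ref{papa} invoked at the same points. The only cosmetic difference is in \eqref{tqh3}: the paper subtracts $u(x)$ rather than a fixed nodal value $\pi_{z_0}(u)$, writing $\nabla Q_h u=\sum_z(\pi_z(u)-u)\nabla\lambda_z$ pointwise, which lets it reuse the intermediate estimate $\|u-\pi_z(u)\|_{L^{p(\cdot)}(T_z)}\le Ch_z|u|_{W^{1,p(\cdot)}(T_z)}$ from Step~1 directly and avoids the extra $h_\kappa^{N/p_+-N/p_-}$ bookkeeping your version incurs.
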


\begin{proof} We proceed in three steps.

{\it Step 1.} We first show inequality \eqref{tqh1}.

Fix $\kappa\in \mathcal{T}_h.$ For $z\in{\N}_h\cap\kappa,$ by
using  Proposition \ref{equi} (6) and Lemma \ref{lema21}, we get
\begin{align*}
\|u-\pi_z(u)\|_{L^{q(\cdot)}({T}_z)}&\le C
\sum_{\{\kappa':\kappa'\subset T_z\}}
h_{\kappa'}^{\frac{N}{q_+}-N}\|u-\pi_z(u)\|_{L^1(\kappa')}.
\end{align*}
By Remark \ref{comp} and Proposition \ref{papa}, we get
\begin{align*}
\|u-\pi_z(u)\|_{L^{q(\cdot)}({T}_z)}&\le C
h_z^{\frac{N}{q_-}-N}\|u-\pi_z(u)\|_{L^1({T}_z)}.
\end{align*}

Thus, by Lemma \ref{lema6}, we have
\begin{align*}
\|u-\pi_z(u)\|_{L^{q(\cdot)}({T}_z)}
&\le C  h_z^{\frac{N}{q_-}-N+1}\left(\|\nabla u\|_{L^1(T_z)}+
\sum_{e\subset T_z}\int_e |\[u\]|\,ds\right).
\end{align*}

Then, by using again   Lemma \ref{lema21} and Remark
\ref{cardinal} we have
\begin{equation}\label{ec}
\|u-\pi_z(u)\|_{L^{q(\cdot)}({T}_z)}\le
Ch_z^{\frac{N}{q_-}+1}\left( h_z^{-\frac{N}{p_-}}\|\nabla
u\|_{L^{p(\cdot)}(T_z)} + h_z^{-N}\sum_{e\subset T_z}\int_e
|\[u\]|\,ds\right).
\end{equation}
To estimate the second term, we use H\"{o}lder inequality,
obtaining
\begin{equation}\label{holderene}
\begin{aligned}
\int_e |\[u\]|\, ds\le&2\|\ \[u\]h_e^{-\frac{1}{p'(x)}}\|_{L^{p(\cdot)}(e)}\|h_e^{\frac{1}{p'(x)}}\|_{L^{p'(\cdot)}(e)}\\
\le& C\|\
\[u\]h_e^{-\frac{1}{p'(x)}}\|_{L^{p(\cdot)}(e)}h_e^{1-\frac{1}{p_-}}\|1\|_{L^{p'(\cdot)}(e)}.
\end{aligned}\end{equation}

Now, by Proposition \ref{equi} (5), we have that
$$
\|1\|_{L^{p'(\cdot)}(e)}\le C h_e^{(N-1)(1-\frac{1}{p_-})}.
$$
Then, we obtain
$$
\int_e |\[u\]|\, ds\le
C\|\[u\]h_e^{-\frac{1}{p'(x)}}\|_{L^{p(\cdot)}(e)}h_z^{N(1-\frac{1}{p_-})}.
$$
Therefore, summing over all $e\subset T_z$ and using \eqref{ec},
we arrive at
\begin{equation}\label{despi}
\|u-\pi_z(u)\|_{L^{q(\cdot)}({T}_z)}\le C
h_z^{\frac{N}{q_-}-\frac{N}{p_-}+1}|u|_{W^{1,p(\cdot)}( T_{z})}.
\end{equation}

Now, as in the proof Theorem 3.1 in \cite{BO} and using
Proposition \ref{equi} (6), we have the inequality \eqref{tqh1}.

\medskip

{\it Step 2.} We now show the inequality \eqref{tqh2}.

Fix $e\in\mathcal{E}_h\cap\partial\O$ and let $z\in{\N}_h\cap e.$
By the inequality \eqref{facil},
$$
\|u-\pi_z(u)\|_{L^{q(\cdot)}(e)}\le C
h_{z}^{-\frac{1}{q_-}}\|u-\pi_z(u)\|_{L^{q(\cdot)}({T}_z)}.
$$
Again, following the lines in \cite{BO} and using that $p$ and $q$
are $\log$-H\"{o}lder continuous in $\overline{\O},$ we arrive at
the inequality \eqref{tqh2}.

\medskip

{\it Step 3.} Finally, we will show the inequality \eqref{tqh3}.

Fix $\kappa\in \mathcal{T}_h.$ First, since $(\lambda_z)_{z\in
{\mathcal{N}}_h\cap\kappa}$ is a partition of the unity in
$\kappa$ we have that for any $x\in \kappa$
$$\nabla Q_h u(x)-\nabla u(x)=\sum_{z\in{\mathcal{N}}_h\cap\kappa} (\pi_z(u)-u(x))\nabla\lambda_z(x) + \sum_{z\in{\mathcal{N}}_h\cap\kappa}\nabla u(x)\lambda_z(x)$$
$$
\|\nabla Q_h u\|_{L^{p(\cdot)}(\kappa)}
\le\sum_{z\in{\mathcal{N}}_h\cap\kappa}
\|(\pi_z(u)-u)\nabla\lambda_z\|_{L^{p(\cdot)}(\kappa)}+\sum_{z\in{\mathcal{N}}_h\cap\kappa}
\|\nabla u\lambda_z\|_{L^{p(\cdot)}(\kappa)}+\|\nabla
u\|_{L^{p(\cdot)}(\kappa)}.
$$
Now, using  Hypothesis \eqref{mesh}, we have that there exists a
constant $C_1$ such that $|\nabla\lambda_z|<C_1h^{-1}$ in
$\kappa$, and by \eqref{despi} we get, using Remark
\ref{cardinal},
\begin{align*}
\|\nabla Q_h u\|_{L^{p(\cdot)}(\kappa)} &\le
C\sum_{z\in{\mathcal{N}}_h\cap\kappa}|u|_{W^{1,p(\cdot)}(T_{z})}+|u|_{W^{1,p(\cdot)}(T_{\kappa})}
\leq (C+1)|u|_{W^{1,p(\cdot)}(T_{\kappa})}.
\end{align*}

The proof is now complete.
\end{proof}

\medskip

Our next aim is to prove some global estimates. To this end we
will need some definitions.

\begin{defi}\label{p*}
Let $p:\O\to[1,\infty)$. Given $q:\O\to[1,\infty)$ and $q\le p^*$
in ${\O},$ we define
$$
\gamma=\sup\left\{\frac{q(x)}{p^*(x)}\colon x\in{\O}\right\}.
$$
\end{defi}

Observe that $0\le\gamma\le1$ and $\gamma=0$ if $p(x)\ge N$ for
all $x\in \O$ and $\gamma=1$ if $p(x)<N$ and $q(x)=p^*(x)$ for all
$x\in\O$.

\begin{defi}\label{p_*}
Let $p:\O\to[1,\infty)$. Given $q:\O\to[1,\infty)$ and $q\le p_*$
in ${\O},$ we define
$$
\beta=\sup\left\{\frac{q(x)}{p_*(x)}\colon x\in{\O}\right\}.
$$
\end{defi}

Observe that $0\le\beta\le1$ and $\beta=0$ if $p(x)\ge N$ for all
$x\in \O$ and $\beta=1$ if $p(x)<N$ and $q(x)=p_*(x)$ for all
$x\in\O$.

\begin{lema}\label{globalth}
    Let $p,q:\O\to[1,\infty)$ be $\log$-H\"{o}lder continuous in $\overline{\O}.$ Let $u\in  S^k(\th)$ satisfy
\begin{equation}\label{hipacot}
|u|_{W^{1,p(\cdot)}(\mathcal{T}_h)}\leq 1.
\end{equation}
Then, we have that
\begin{itemize}
\item If  $p\le q\le p^*$ in $\overline{\O},$ then
\begin{align}
\label{qh-global}\int_{\O}|u-Q_h(u)|^{q(x)}\, dx \le& C h^{N(1-\gamma)}, \\
\label{qh-grad}\int_\O |\nabla Q_h(u)|^{p(x)}\, dx\le C,
\end{align}
\item If  $p\le q\le p_*$ in $\overline{\O},$  then
\begin{equation}
\label{qh-traza} \int_{\partial\O}|u-Q_h(u)|^{q(x)}\, dS\le C
h^{(N-1)(1-\beta)},
\end{equation}
\end{itemize}
where  $C=C(p_1,p_2,\Omega,C_{log},N)$ and $\gamma$ and $\beta$
are given in Definitions \ref{p*} and \ref{p_*}, respectively.
\end{lema}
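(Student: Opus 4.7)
The plan is to apply the local bounds of Theorem \ref{tqh} on each element $\kappa$ (respectively, each boundary face $e$), convert the resulting Luxemburg norm inequalities into modular inequalities by Proposition \ref{equi}, and then sum over the mesh. The global hypothesis $|u|_{W^{1,p(\cdot)}(\th)}\le 1$ provides both a uniform bound on the local seminorms $M_\kappa:=|u|_{W^{1,p(\cdot)}(T_\kappa)}$ (Luxemburg norms decrease under restriction and each patch $T_\kappa$ contains only $O(1)$ interior edges, by Remark \ref{cardinal}) and a bound on the global $p(\cdot)$-modular, against which the summed contributions will be compared.

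I begin with \eqref{qh-grad}. By Theorem \ref{tqh3}, $\|\nabla Q_h(u)\|_{L^{p(\cdot)}(\kappa)}\lesssim M_\kappa\lesssim 1$, so Proposition \ref{equi}(3) yields $\int_\kappa|\nabla Q_h(u)|^{p(x)}\,dx\lesssim M_\kappa^{p_-^\kappa}$. Using Proposition \ref{papa} to pass from $M_\kappa^{p_-^\kappa}$ to $M_\kappa^{p_+^\kappa}$ up to a constant, I can dominate $M_\kappa^{p_-^\kappa}$ by the local $p(\cdot)$-modular on $T_\kappa$. The bounded overlap of the patches $\{T_\kappa\}$ then gives
$$
\sum_\kappa M_\kappa^{p_-^\kappa}\lesssim \int_\O|\nabla u|^{p(x)}\,dx+\int_{\Gamma_{int}}|\[u\]|^{p(x)}\mathbf{h}^{1-p(x)}\,dS\lesssim 1,
$$
which establishes \eqref{qh-grad}.

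For \eqref{qh-global}, Theorem \ref{tqh1} gives $\|u-Q_h(u)\|_{L^{q(\cdot)}(\kappa)}\lesssim h_\kappa^{\alpha_\kappa}M_\kappa$ with $\alpha_\kappa:=N/q_-^\kappa-N/p_-^\kappa+1\ge 0$ (non-negativity follows from $q\le p^*$). Converting this norm bound to a modular bound as above yields $\int_\kappa|u-Q_h(u)|^{q(x)}\,dx\lesssim h_\kappa^{\alpha_\kappa q_-^\kappa}M_\kappa^{q_-^\kappa}$, and the key algebraic observation is the identity
$$
\alpha_\kappa q_-^\kappa = N\bigl(1-q_-^\kappa/p^*(x_0)\bigr),
$$
where $x_0\in\overline{\kappa}$ realizes $p(x_0)=p_-^\kappa$. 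Since $p^*$ is non-decreasing in $p$ and $q_-^\kappa\le q(x_0)$, I obtain $q_-^\kappa/p^*(x_0)\le q(x_0)/p^*(x_0)\le\gamma$, hence $h_\kappa^{\alpha_\kappa q_-^\kappa}\le h_\kappa^{N(1-\gamma)}\le h^{N(1-\gamma)}$. Summing over $\kappa$ and using $\sum_\kappa M_\kappa^{q_-^\kappa}\lesssim 1$ (by the argument of the previous paragraph, since $M_\kappa^{q_-^\kappa}\lesssim M_\kappa^{p_-^\kappa}$ up to a factor absorbed into the constant) completes the proof of \eqref{qh-global}. The boundary estimate \eqref{qh-traza} is obtained by the identical template starting from Theorem \ref{tqh2}, with $N-1$ replacing $N$ in the exponent of $h_e$ and $\beta,p_*$ replacing $\gamma,p^*$.

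The main obstacle is not computational but bookkeeping: the local variable exponents $p_-^\kappa$ and $p_+^\kappa$ must be interchanged carefully so that the elementwise exponents produced by Theorem \ref{tqh} can be aggregated into a single global exponent $N(1-\gamma)$ or $(N-1)(1-\beta)$. The log-H\"older hypothesis, encoded in Proposition \ref{papa}, is the workhorse that enables this. Edge cases where $p_-^\kappa\ge N$ (so that $p^*=\infty$) are automatic, since then $q/p^*\equiv 0$ on $\kappa$.
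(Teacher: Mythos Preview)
Your overall strategy matches the paper's: localize via Theorem \ref{tqh}, convert norms to modulars elementwise, identify the $h$-exponent as $N(1-\gamma)$, and then bound $\sum_\kappa M_\kappa^{q_-^\kappa}$ by the global modular. The algebraic identity $\alpha_\kappa q_-^\kappa = N\bigl(1-q_-^\kappa/p^*(x_0)\bigr)$ is the same reduction the paper performs (they write it as $h_\kappa^{N-Nq_-/p_-+q_-}\le C h_\kappa^{N-Nq(x)/p(x)+q(x)}$ via Proposition \ref{papa}, then bound by $h^{N(1-\gamma)}$).

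There is, however, a genuine gap in your summation step. You write ``using Proposition \ref{papa} to pass from $M_\kappa^{p_-^\kappa}$ to $M_\kappa^{p_+^\kappa}$ up to a constant,'' but Proposition \ref{papa}(2) requires the base $A$ to satisfy $A\ge h^\alpha$ for some fixed $\alpha$. You never verify this for $A=M_\kappa$ (or for the individual pieces $\|\nabla u\|_{L^{p(\cdot)}(\kappa)}$ and $\|\[u\]{\bf h}^{-1/p'}\|_{L^{p(\cdot)}(e)}$), and indeed it can fail: nothing prevents some $M_\kappa$ from being arbitrarily small relative to $h_\kappa$. When $M_\kappa\ll h_\kappa^\alpha$, the inequality $M_\kappa^{p_-^\kappa}\lesssim M_\kappa^{p_+^\kappa}$ is false (the left side is much larger), so you cannot dominate $M_\kappa^{p_-^\kappa}$ by the local modular $\sim M_\kappa^{p_+^\kappa}$ on those elements.

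The paper resolves this with an explicit dichotomy: set $A=\{\kappa:\|\nabla u\|_{L^{p(\cdot)}(\kappa)}\ge h_\kappa^{N/q_-}\}$ (and an analogous set $B$ for the jump term). On $A$, Proposition \ref{papa}(2) applies and yields $\|\nabla u\|_{\lpk}^{q_-}\lesssim\|\nabla u\|_{\lpk}^{q_+}\le\int_\kappa|\nabla u|^{p(x)}\,dx$; on $A^c$, one simply has $\|\nabla u\|_{\lpk}^{q_-}<h_\kappa^N$, and $\sum_{\kappa\in A^c}h_\kappa^N\le C|\Omega|$. Summing the two contributions gives $\sum_\kappa M_\kappa^{q_-^\kappa}\le C$. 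You need to insert this case split (or an equivalent device) to make the argument complete; once you do, your proof coincides with the paper's.
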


\begin{proof}
First observe that, by \eqref{tqh1}, we have
$$
\int_\kappa
\frac{|u-Q_h(u)|^{q(x)}}{\left(Ch_k^{\frac{N}{q_-}-\frac{N}{p_-}+1}
|u|_{W^{1,p(\cdot)}(T_{\kappa})}\right)^{q(x)}}dx \le1\quad
\forall \kappa\in\mathcal{T}_h,
$$
and by \eqref{hipacot}, we get
$$
\frac{1}{Ch_k^{N-\frac{Nq_-}{p_-}+q_-}|u|_{W^{1,p(\cdot)}(T_{\kappa})}^{q_-}}\int_\kappa
|u-Q_h(u)|^{q(x)}dx\le1\quad \forall \kappa\in\mathcal{T}_h.
$$
Then, by Proposition \ref{papa},
\begin{align*}
\int_{\kappa} |u-Q_h(u)|^{q(x)}\, dx&\le
Ch_{\kappa}^{N-\frac{Nq_-}{p_-}+q_-} |u|_{W^{1,p(\cdot)}(
T_{\kappa})}^{q_-}\le C h_{\kappa}^{N-\frac{Nq(x)}{p(x)}+q(x)}
|u|_{W^{1,p(\cdot)}(T_{\kappa})}^{q_-}
\end{align*}
for any $\kappa\in\mathcal{T}_h$ and  $x\in\kappa$. Therefore,
\begin{equation}\label{casi}
\int_{\kappa} |u-Q_h(u)|^{q(x)}\, dx\le  C h^{N(1-\gamma)}
|u|_{W^{1,p(\cdot)}(T_{\kappa})}^{q_-}\quad
\forall\kappa\in\mathcal{T}_h.
\end{equation}

On the other hand, by Remark \ref{cardinal}, the number of
$\kappa\subset T_{\kappa}$ is uniformly bounded in $h$. Using this
fact and  Proposition \ref{equi} (6), we have that
\begin{equation}\label{alfin}
|u|^{q_-}_{W^{1,p(\cdot)}( T_{\kappa})}\leq C \sum_{\kappa\subset
T_{\kappa}} \left(\|\nabla u\|^{q_-}_{\lpk}+\|\[u\]
{\bf{h}}^{\frac{1-p}{p}}\|^{q_-}_{L^{p(\cdot)}(\kappa\cap\Gamma_{int})}\right).\end{equation}

On the other hand, if we suppose that $\|\nabla u\|_{\lpk}\geq
h_{\kappa}^{N/q_-}$, by Proposition \ref{papa} (2), we have that
\begin{equation}\label{mayor1}
\|\nabla u\|_{\lpk}^{q_-}\le C \|\nabla u\|_{\lpk}^{q_+}.
\end{equation}

Arguing as before, if $\|\[u\]
{\bf{h}}^{\frac{1-p}{p}}\|_{L^{p(\cdot)}(\kappa\cap\Gamma_{int})}\geq
h_{\kappa}^{N/q_-},$ we have that
\begin{equation}\label{mayor12}
\|\[u\]
{\bf{h}}^{\frac{1-p}{p}}\|_{L^{p(\cdot)}(\kappa\cap\Gamma_{int})}^{q_-}\leq
C \|\[u\]
{\bf{h}}^{\frac{1-p}{p}}\|_{L^{p(\cdot)}(\kappa\cap\Gamma_{int})}^{q_+}.
\end{equation}

Now, we take
$$A=\left\{\kappa\in \mathcal{T}_h: \|\nabla u\|_{\lpk}\geq
h_{\kappa}^{N/q_-}\right\},$$ and
$$B=\left\{\kappa\in \mathcal{T}_h: \|\[u\]
{\bf{h}}^{\frac{1-p}{p}}\|_{L^{p(\cdot)}(\kappa\cap\Gamma_{int})}\geq
h_{\kappa}^{N/q_-}\right\}.$$

Observe that
\begin{equation}\label{truco1}
\begin{aligned}
\sum_{\kappa\in A^c} \|\nabla u\|_{\lpk}^{q_-}\leq
\sum_{\kappa\in A^c} h_{\kappa}^N\leq C,\\
\sum_{\kappa\in B^c} \|\[u\]
{\bf{h}}^{\frac{1-p}{p}}\|^{q_-}_{L^{p(\cdot)}(\kappa\cap\Gamma_{int})}\leq
C.\end{aligned}\end{equation}

On the other hand, by hypothesis \eqref{hipacot}, we have that
$\|\nabla u\|_{\lpk}\leq 1$ and then, for all $\kappa\in\th$
\begin{equation}\label{truco2}\begin{aligned}&
\|\nabla u\|_{\lpk}^{q_+}\le\|\nabla u\|_{\lpk}^{p_+}\leq
\int_{\kappa}|\nabla u|^{p(x)}\,
dx,\\
&\|\[u\]
{\bf{h}}^{\frac{1-p}{p}}\|^{q_+}_{L^{p(\cdot)}(\kappa\cap\Gamma_{int})}\leq
\int_{\kappa\cap\Gamma_{int}}|\[u\]|^{p(x)} {\bf{h}}^{1-p(x)}\,
dx.\end{aligned}\end{equation}

Since each $\kappa$ appears only in finitely many sets
$T_{\kappa'}$ we have, by  \eqref{alfin}--\eqref{truco2},
\begin{align*} \sum_{\kappa\in \mathcal{T}_h}|u|^{q_-}_{W^{1,p(\cdot)}(T_{\kappa})} \leq & C\left(
\sum_{\kappa\in A} \|\nabla u\|^{q_+}_{\lpk}+\sum_{\kappa\in A^c}
\|\nabla u\|^{q_-}_{\lpk}\right)\\&+C\left(\sum_{\kappa\in
B}\|\[u\]
{\bf{h}}^{\frac{1-p}{p}}\|^{q_+}_{L^{p(\cdot)}(\kappa\cap\Gamma_{int})}+\sum_{\kappa\in
B^c}\|\[u\]
{\bf{h}}^{\frac{1-p}{p}}\|^{q_-}_{L^{p(\cdot)}(\kappa\cap\Gamma_{int})}\right)\\
\leq & C \left(\sum_{\kappa\in A}\int_{\kappa}|\nabla u|^{p(x)}\,
dx +\sum_{\kappa\in B}\int_{\kappa\cap\Gamma_{int}}|\[u\]|^{p(x)}
{\bf{h}}^{1-p(x)}\, ds+1\right)\\=&  C\left( \int_{\O}|\nabla
u|^{p(x)}\, dx +\int_{\Gamma_{int}}|\[u\]|^{p(x)}
{\bf{h}}^{1-p(x)}\, ds+1\right).
\end{align*}

Thus, by \eqref{hipacot} and \eqref{casi} we get,
\begin{align*}
\int_{\O}|u-Q_h(u)|^{q(x)}\,
dx&=\sum_{\kappa\in\mathcal{T}_h}\int_{\kappa}|u-Q_h(u)|^{q(x)}\,
dx
\le C h^{N(1-\gamma)}.
\end{align*}

Lastly, using the same argument, \eqref{tqh2} and \eqref{tqh3}, we
get
$$
\int_{\partial\O}|u-Q_h(u)|^{q(x)}\, dS \leq C h^{(N-1)(1-\beta)}
\qquad \mbox{and}\qquad \int_{\O}|\nabla Q_h(u)|^{p(x)}\, dx \leq
C ,
$$
where $C$ is independent of $h$.
\end{proof}

The following corollary follows immediately
\begin{corol}\label{globalthfin}
    Let $p,q:\O\to[1,\infty)$ be log-H\"{o}lder continuous in $\overline{\O}$.
Then, for all $u\in  S^k(\th),$ we have,
\begin{itemize}
\item If  $p\le q\le p^*$ in ${\O},$ then
\begin{align}
\label{qh-global-fin}\int_{\O}|u-Q_h(u)|^{q(x)}\, dx \le& C
h^{N(1-\gamma)}
\max\left\{|u|_{{W}^{1,p(\cdot)}(\mathcal{T}_h)}^{q_1},|u|_{{W}^{1,p(\cdot)}(\mathcal{T}_h)}^{q_2}\right\}, \\
\label{qh-global-grad}\int_\O |\nabla Q_h(u)|^{p(x)}\, dx\le&
C\max\left\{|u|_{{W}^{1,p(\cdot)}(\mathcal{T}_h)}^{p_1},|u|_{{W}^{1,p(\cdot)}(\mathcal{T}_h)}^{p_2}\right\}.
\end{align}
\item If  $p\le q\le p_*$ in ${\O},$  then
\begin{equation}
\label{qh-traza2} \int_{\partial\O}|u-Q_h(u)|^{q(x)}\, dS\le C
h^{(N-1)(1-\beta)}
\max\left\{|u|_{{W}^{1,p(\cdot)}(\mathcal{T}_h)}^{q_1},|u|_{{W}^{1,p(\cdot)}(\mathcal{T}_h)}^{q_2}\right\}.
\end{equation}
\end{itemize}

Where  $C=C(p_1,p_2,\Omega,C_{log},N)$ and $\gamma$ and $\beta$
are given in Definitions \ref{p*} and \ref{p_*}, respectively.
\end{corol}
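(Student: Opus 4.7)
The plan is to derive Corollary \ref{globalthfin} from Lemma \ref{globalth} by a scaling/homogeneity argument, exploiting the fact that the seminorm $|\cdot|_{W^{1,p(\cdot)}(\th)}$ is built from Luxemburg norms, and hence is positively homogeneous of degree one, while $Q_h$ is linear. Given $u\in S^k(\th)$, set $M:=|u|_{W^{1,p(\cdot)}(\th)}$; the aim is to rescale so that Lemma \ref{globalth} applies, and then push the scaling back through.

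First I would dispose of the degenerate case $M=0$. The vanishing of the seminorm forces $\nabla u\equiv 0$ on every $\kappa\in\th$ and $\[u\]\equiv 0$ on every interior face, hence $u$ is a global constant. Applying Lemma \ref{lema6} with $|Du|(T_z)=0$ shows that $\pi_z$ reproduces constants, so by the partition-of-unity property of $\{\lambda_z\}$ we get $Q_h(u)\equiv u$. Then the left-hand sides of \eqref{qh-global-fin} and \eqref{qh-traza2} vanish, while \eqref{qh-global-grad} is trivial since $\nabla Q_h(u)\equiv 0$.

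Assuming now $M>0$, I would set $v:=u/M\in S^k(\th)$. By homogeneity, $|v|_{W^{1,p(\cdot)}(\th)}=1$, so the hypothesis \eqref{hipacot} of Lemma \ref{globalth} holds for $v$. Using the linearity of $Q_h$, i.e.\ $v-Q_h(v)=(u-Q_h(u))/M$ and $\nabla Q_h(v)=\nabla Q_h(u)/M$, the three conclusions \eqref{qh-global}, \eqref{qh-grad}, \eqref{qh-traza} of Lemma \ref{globalth} read
$$
\int_\O M^{-q(x)}|u-Q_h(u)|^{q(x)}\,dx\le C h^{N(1-\gamma)},\qquad \int_\O M^{-p(x)}|\nabla Q_h(u)|^{p(x)}\,dx\le C,
$$
and analogously for the boundary integral.

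The last step is to strip off the variable-exponent factor $M^{-q(x)}$ (resp.\ $M^{-p(x)}$). Since $q(x)$ varies, one cannot pull it out of the integral in a single stroke; this is the only mild technicality I foresee, and it is handled by a two-case split. If $M\le 1$, then $M^{-q(x)}\ge M^{-q_1}$ pointwise; if $M>1$, then $M^{-q(x)}\ge M^{-q_2}$. Multiplying through by $\max\{M^{q_1},M^{q_2}\}$ in either case gives \eqref{qh-global-fin}, and the identical split with $q_i$ replaced by $p_i$ gives \eqref{qh-global-grad}. The boundary estimate \eqref{qh-traza2} follows in exactly the same way from \eqref{qh-traza}. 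Beyond this elementary case analysis, no further obstacle is expected.
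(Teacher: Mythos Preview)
Your proposal is correct and follows exactly the paper's approach: the paper's entire proof is the single line ``It follows by Lemma \ref{globalth}, taking $v=u\,|u|_{W^{1,p(\cdot)}(\mathcal{T}_h)}^{-1}$,'' and you have fleshed out precisely this scaling argument, including the degenerate case $M=0$ and the two-case split $M\le 1$ versus $M>1$ needed to remove the factor $M^{-q(x)}$.
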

\begin{proof}
It follows by Lemma \ref{globalth}, taking $\displaystyle
v={u}|u|_{{W}^{1,p(\cdot)}(\mathcal{T}_h)}^{-1}$.
\end{proof}

\begin{remark}\label{rem25} Under the same hypothesis of the last corollary,
    if  $1\le q\le p^*$ in ${\O}$,  we have that,
    for all $u\in  S^k(\th),$
$$
    \|u-Q_h(u)\|_{\lq} \le C h^{N(1-\gamma)}
|u|_{{W}^{1,p(\cdot)}(\mathcal{T}_h)} \quad \mbox{ and } \quad
\|\nabla Q_h(u)\|_{\lp}\le C|u|_{{W}^{1,p(\cdot)}(\mathcal{T}_h)},
$$
where  $C=C(p_1,p_2,\Omega,C_{log},N).$
\end{remark}

\section{The lifting operator}\label{lifop}

We begin this section by defining, as in \cite{BO} (see also
\cite{ABCM}), the lifting operator, i.e.
\begin{defi}\label{lifting}
Let $l\ge0$ and
 $R_h\colon\wpt\to S^l(\th)^N$ defined as,
 $$\int_{\O} \langle R_h(u), \phi\rangle\, dx =
-\int_{\Gamma_{int}} \langle \[u\], \{\phi\} \rangle \, dS
 \quad \forall \phi\in S^l(\th)^N.$$
\end{defi}

This operator appears in the first term of the discretized
functional $I_h$. As we can see from the definition, this operator
represents the contribution of the jumps to the distributional
gradient. This is the reason  why it is crucial to add  this term
in order to have the consistency of the method.

We point out that this lifting operator was first used in
\cite{BR1997} in order to describe the contributions of  the jumps
across the interelements of the computed solution on the
(computed) gradient of the solution in a mixed formulation of
Navier-Stokes equations. It was also used in \cite{BMMPR2000}
where a solid mathematical background for the method introduced in
\cite{BR1997} was proposed.

\medskip

Now, we give a bound of the $\lp$-norm of $R_h(u)$ in terms of the
jumps of $u$ in $\gi$.

 When $p$ is constant the proof follows from an inf-sup condition. Since in our case, we are dealing with the Luxemburg norm,
  we can't prove the boundedness directly from the definition.
  We can prove this inf-sup condition, but we can not use it to prove the result.
 Instead, we find a local characterization of $R_h$ in order to prove a local bound and then, we  prove the global bound.

We give first the local estimate.

\begin{lema}\label{infcontloc} There exists a constant $C_1$ such that, for any $\kappa\in\mathcal{T}_h,$ we have
$$
\|R_h(u)\|_{L^{p(\cdot)}(\kappa)}\leq
C\|{\bf{h}}^{-1/p'(x)}\[u\]\|_{L^{p(\cdot)}(\kappa\cap\Gamma_{int})}\quad
\forall u\in\wpt\quad \forall h\in (0,1].
$$
\end{lema}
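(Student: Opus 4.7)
The plan is to exploit the fact that $R_h(u)|_\kappa\in P^l(\kappa)^N$ is polynomial via a local characterization of $R_h$, and then bound its norm by scaling to the reference element where all norms on $P^l(\hat\kappa)^N$ are equivalent. Fix $\kappa\in\th$. Any $\psi\in P^l(\kappa)^N$ extends by zero outside $\kappa$ to a test function $\tilde\psi\in S^l(\th)^N$; since then $\{\tilde\psi\}=\psi/2$ on faces of $\kappa$ in $\gi$ and $\{\tilde\psi\}=0$ elsewhere, plugging $\tilde\psi$ into the defining relation for $R_h$ gives the local characterization
$$\int_\kappa \langle R_h(u),\psi\rangle\,dx=-\tfrac{1}{2}\sum_{e\subset\partial\kappa\cap\gi}\int_e\langle \[u\],\psi\rangle\,dS\qquad\forall \psi\in P^l(\kappa)^N.$$
This identifies $g:=R_h(u)|_\kappa\in P^l(\kappa)^N$ by Riesz representation in the finite-dimensional $L^2$-inner product space $P^l(\kappa)^N$, expressing it purely in terms of the jumps on $\partial\kappa\cap\gi$.

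Next, test the characterization with $\psi=g$, apply the variable-exponent H\"older inequality on each face, and use ${\bf h}(x)=h_e$ on $e$ to obtain
$$\|g\|_{L^2(\kappa)}^2\le C\sum_{e\subset\partial\kappa\cap\gi}\|\[u\]{\bf h}^{-1/p'(x)}\|_{L^{p(\cdot)}(e)}\,\|g\,{\bf h}^{1/p'(x)}\|_{L^{p'(\cdot)}(e)}.$$
By Proposition \ref{papa} the weight ${\bf h}^{1/p'(x)}$ can be replaced by $h_\kappa^{1/p'_-}$ up to a universal constant on each face, and the same inverse-trace / scaling argument as in the proof of Lemma \ref{local} (now for interior faces, for the polynomial $g$, and with exponent $p'(\cdot)$ in place of $p(\cdot)$) yields $\|g\|_{L^{p'(\cdot)}(e)}\le Ch_\kappa^{-1/p'_-}\|g\|_{L^{p'(\cdot)}(\kappa)}$. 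The two factors combine to
$$\|g\,{\bf h}^{1/p'(x)}\|_{L^{p'(\cdot)}(e)}\le C\|g\|_{L^{p'(\cdot)}(\kappa)}.$$

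Substituting back and using Lemma \ref{lema21} to pass from $\|g\|_{L^{p'(\cdot)}(\kappa)}$ to $\|g\|_{L^2(\kappa)}$ (then canceling one factor of $\|g\|_{L^2(\kappa)}$ with the square on the left), and finally applying Lemma \ref{lema21} once more to convert the resulting $L^2(\kappa)$ bound on the polynomial $g$ back to an $L^{p(\cdot)}(\kappa)$ bound, one arrives at
$$\|R_h(u)\|_{L^{p(\cdot)}(\kappa)}\le C\sum_{e\subset\partial\kappa\cap\gi}\|\[u\]{\bf h}^{-1/p'(x)}\|_{L^{p(\cdot)}(e)},$$
with the powers of $h_\kappa$ accrued along the way cancelling exactly as in the constant-$p$ case. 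A final application of Proposition \ref{equi}(6) collects the face norms into the single norm $\|\[u\]{\bf h}^{-1/p'(x)}\|_{L^{p(\cdot)}(\kappa\cap\gi)}$ claimed by the lemma.

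The main obstacle is keeping every constant uniform in $\kappa$ and $h$ through these scalings: the inverse-trace inequality in the variable exponent, the equivalence between $L^2$ and $L^{p(\cdot)}$ (or $L^{p'(\cdot)}$) on polynomials, and the replacement of the pointwise weights ${\bf h}^{\pm 1/p'(x)}$ by $h_\kappa^{\pm 1/p'_-}$ all rely on Proposition \ref{papa} and on the uniform bound $p_1\le p\le p_2$ in order not to pick up $h$-dependent constants. Verifying that the $h_\kappa$-exponents produced by the two appeals to Lemma \ref{lema21} cancel exactly is a somewhat tedious bookkeeping with $p_-$ and $p_+$, but is forced by the homogeneity already visible in the constant-$p$ version of the estimate, where the weight ${\bf h}^{-1/p'}$ is the only possible choice compatible with scaling.
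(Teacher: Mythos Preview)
Your argument is correct, but it follows a genuinely different route from the paper's. Both proofs start from the same local characterization of $R_h(u)|_\kappa$ obtained by testing with $\psi\in P^l(\kappa)^N$ extended by zero; after that they diverge. The paper solves the resulting linear system in a nodal basis to obtain a \emph{pointwise} bound
\[
|R_h(u)|\le \frac{C}{h_\kappa^N}\sum_{e\subset\partial\kappa}\int_e|\[u\]|\,dS\quad\text{on }\kappa,
\]
and then integrates, using only one application of H\"older on the faces and one appeal to Proposition~\ref{papa}. You instead test with $\psi=g=R_h(u)|_\kappa$, which is the localized version of the inf--sup/duality argument the authors explicitly set aside for Luxemburg norms; your key observation is that, once localized to a single element, the obstruction disappears because Lemma~\ref{lema21} lets you move freely between $L^2$, $L^{p(\cdot)}$ and $L^{p'(\cdot)}$ on polynomials at the cost of explicit powers of $h_\kappa$. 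The paper's route is shorter and avoids the double norm conversion; yours is closer to the standard constant-$p$ DG argument and makes clearer why that argument survives in the variable-exponent setting.

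One small correction: the $h_\kappa$-exponents produced by your two uses of Lemma~\ref{lema21} and by the trace step do \emph{not} cancel exactly in the variable case; the residue is $h_\kappa^{c(p_-^\kappa-p_+^\kappa)}$ (with $c$ depending on $N$ and $p_1$), which is bounded uniformly only because of Proposition~\ref{papa}. You note this dependence in your closing paragraph, so this is a wording issue rather than a gap, but ``cancel exactly'' should read ``cancel up to a log-H\"older correction''. Also note that Lemma~\ref{local} is stated for boundary faces; its proof carries over verbatim to interior faces and to the exponent $p'(\cdot)$ (which is log-H\"older since $p_1>1$), so your invocation is legitimate but deserves a one-line remark.
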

\begin{proof}
We proceed in two steps.

{\it Step 1}. We first want to prove that,

\begin{equation}\label{cotaruk}
|R_h(u)|\leq \frac{C}{h_{\kappa}^N} \sum_{e\subset\kappa} \int_e
|\[u\]|\, dS\quad \forall \kappa\in\th\end{equation} where $C$ is
independent of $\kappa$ and $h$.

We begin by observing that, by Hypothesis \ref{omega}, there
exists $m=m(k,N)\in\mathbb{N}$ such that for each $\kappa\in\th$,
$$R_h(u)|_{\kappa}\circ F_{\kappa}=\sum_{i=1}^m a_i \varphi_i(x),$$
where $\{\varphi_i\}$ is the standard nodal base of
$\left(P^l\right)^N$ in the reference element
$\hat{\kappa}:=F^{-1}_{\kappa}(\kappa)$.

Using the definition of $R_h$ we have that for each $1\leq j \leq
m$,
$$
\int_{\O} R_h(u) \varphi_j\circ F^{-1}_{\kappa}(x)\,
dx=\sum_{i=1}^m a_i \int_{\kappa} \varphi_i\circ
F^{-1}_{\kappa}(x)\varphi_j\circ F^{-1}_{\kappa}(x)\, dx
=-\sum_{e\subset \kappa} \int_e \[u\]\{\varphi_j\circ
F^{-1}_{\kappa}(x)\}\, dS.
$$
On the other hand, if we change variables and we use Hypothesis
\ref{mesh} and the fact that
 $|\varphi_i(x)|\leq 1$, we get
$$
\int_{\kappa}\varphi_i\circ F^{-1}_{\kappa}(x)\varphi_j\circ
F^{-1}_{\kappa}(x)\, dx=h_{\kappa}^N\int_{\hat{\kappa}}
\varphi_i(x)\varphi_j(x) \frac{|\det(DF_{\kappa})|}
{{h_\kappa}^N}\, dx=h_{\kappa}^N d_{ij}
$$
with $d_{ij} \sim 1.$

Therefore,
$$
R_h(u)|_{\kappa}\circ F_{\kappa}=\frac{1}{h_\kappa^N}\sum_{i=1}^m
(D^{-1}b)_i\varphi_i(x)\, dx,
$$
where $D=(d_{ij})$ and $\di b_j=-\sum_{e\subset \kappa} \int_e
\[u\] \{\varphi_j\circ F^{-1}_{\kappa}(x)\}\, dS$.

Thus, using that $|\varphi_i(x)|\leq 1$, we arrive at
\eqref{cotaruk}.

\medskip

{\it Step 2}. Now, we show that there exists a constant $C_1$ such
that, for any $\kappa\in\mathcal{T}_h,$ we have
$$
\|R_h(u)\|_{L^{p(\cdot)}(\kappa)}\leq
C\|{\bf{h}}^{-1/p'(x)}\[u\]\|_{L^{p(\cdot)}(\kappa\cap\Gamma_{int})}\quad
\forall u\in\wpt\quad \forall h\in (0,1].
$$

 By inequality \eqref{holderene}, we have
 $$
 \int_e |\[u\]|\, ds\leq C h_{e}^{N(1-\frac{1}{p_-})}
 \|\[u\]h_e^{-\frac{1}{p'(x)}}\|_{L^{p(\cdot)}(e)}.
 $$
 Thus, by Hypothesis \ref{mesh} and \eqref{cotaruk}, we have that
 $$
 |R_h(u)|\leq  \frac{C}{h_{\kappa}^{{N}/{p_-}}} \sum_{e\subset\kappa}
 \|\[u\]h_e^{-\frac{1}{p'(x)}}\|_{L^{p(\cdot)}(e)}.
 $$
Now, take $T=\sum_{e\in \kappa}
\|\[u\]h_e^{-\frac{1}{p'(x)}}\|_{L^{p(\cdot)}(e)}.$ Then,
$$
\int_{\kappa} \Big|\frac{R_h(u)}{T}\Big|^{p(x)}\, dx \leq C
\int_{\kappa} {h_{\kappa}^{{-Np(x)}/{p_-}}}\, dx\leq C
h_{\kappa}^{N(1-p_+/p_-)}\leq C
$$
where in the last inequality we are using Proposition \ref{papa}.

The result follows now by Remark \ref{cardinal}.
\end{proof}

\begin{lema}\label{lifcont}
Let $p:\O\to\colon[1,\infty)$ be log-H\"{o}lder continuous in
$\O$. Then, there exists a constant $C$ such that,
$$\|R_h(u)\|_{\lp}\leq C\|{\bf{h}}^{-1/p'(x)}\[u\]\|_{\lpi}\quad \forall u\in\wpt\quad \forall h\in (0,1].$$
\end{lema}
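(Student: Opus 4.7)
The plan is to promote the elementwise estimate of Lemma \ref{infcontloc} to a global one via the modular, using the log-H\"{o}lder continuity of $p$ (through Proposition \ref{papa}) to absorb the exponent variation.

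By linearity of $R_h$, after replacing $u$ by $u/K$ with $K=\|\mathbf{h}^{-1/p'(x)}\[u\]\|_{\lpi}$ (assuming $K>0$; otherwise $R_h(u)=0$), it suffices to prove $\|R_h(u)\|_{\lp}\le C$ under the normalization $\int_{\gi}|G|^{p(x)}\,dS\le 1$, where $G:=\mathbf{h}^{-1/p'(x)}\[u\]$. Set $\beta_\kappa:=\|G\|_{L^{p(\cdot)}(\gi\cap\kappa)}$; then $\beta_\kappa\le 1$. Lemma \ref{infcontloc} gives $\|R_h(u)\|_{L^{p(\cdot)}(\kappa)}\le C_0\beta_\kappa\le C_0$, so $\|R_h(u)/C_0\|_{L^{p(\cdot)}(\kappa)}\le \beta_\kappa\le 1$, and hence by Proposition \ref{equi}(3),
\[
\int_\kappa \Big|\frac{R_h(u)}{C_0}\Big|^{p(x)}dx \le \beta_\kappa^{p_-^\kappa}.
\]

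Next, I split the elements according to the size of $\beta_\kappa$ relative to the mesh size. Set
\[
\mathcal{A}:=\{\kappa\in\th\colon \beta_\kappa\ge h_\kappa^{N/p_-^\kappa}\},\qquad \mathcal{B}:=\th\setminus\mathcal{A}.
\]
For $\kappa\in\mathcal{B}$ we immediately get $\beta_\kappa^{p_-^\kappa}< h_\kappa^{N}\le C|\kappa|$ by Hypothesis \ref{mesh}(a). For $\kappa\in\mathcal{A}$, Proposition \ref{papa}(2) applied with $A=\beta_\kappa$, $\alpha=N/p_-^\kappa$ yields $\beta_\kappa^{p_-^\kappa}\le C\beta_\kappa^{p_+^\kappa}$, and since $\beta_\kappa\le 1$, Proposition \ref{equi}(3) gives $\beta_\kappa^{p_+^\kappa}\le \int_{\gi\cap\kappa}|G|^{p(x)}\,dS$. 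Summing and using Remark \ref{cardinal} (each interior face belongs to exactly two elements),
\[
\int_{\Omega}\Big|\frac{R_h(u)}{C_0}\Big|^{p(x)}dx \le C\!\!\sum_{\kappa\in\mathcal{A}}\int_{\gi\cap\kappa}|G|^{p(x)}\,dS + \sum_{\kappa\in\mathcal{B}}|\kappa| \le 2C\int_{\gi}|G|^{p(x)}\,dS+|\Omega| \le M,
\]
where $M=2C+|\Omega|$ is a constant depending only on $p_1,p_2,\Omega$ and $C_{\log}$. A final rescaling ($\lambda=\max(1,M^{1/p_1})$) combined with the elementary inequality $\int|f/\lambda|^{p(x)}\le\lambda^{-p_1}\int|f|^{p(x)}$ for $\lambda\ge 1$ shows $\|R_h(u)\|_{\lp}\le C_0\lambda$, and undoing the normalization gives the claimed bound.

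The main obstacle is the mismatch between norm control and modular control on each element: the local lemma only gives the Luxemburg norm of $R_h(u)$ on $\kappa$, while summation must be done at the modular level. Bridging this gap through the inequalities $\beta_\kappa^{p_+^\kappa}\le\int|G|^{p(x)}\le\beta_\kappa^{p_-^\kappa}$ requires controlling $\beta_\kappa^{p_-^\kappa-p_+^\kappa}$, which is exactly where the log-H\"older hypothesis intervenes via Proposition \ref{papa}. The threshold $h_\kappa^{N/p_-^\kappa}$ is chosen so that the ``small'' case contributes no more than $|\kappa|$, making the residual sum bounded by $|\Omega|$.
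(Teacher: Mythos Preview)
Your proof is correct and follows essentially the same route as the paper: normalize so that $\|\mathbf{h}^{-1/p'(x)}\[u\]\|_{\lpi}\le 1$, pass from the local norm estimate of Lemma \ref{infcontloc} to elementwise modular bounds, and then sum by splitting the elements according to whether $\beta_\kappa\ge h_\kappa^{N/p_-^\kappa}$, invoking Proposition \ref{papa} on the ``large'' set and bounding the ``small'' set by $|\Omega|$. This is precisely the argument the paper intends when it says ``proceeding as in Lemma \ref{globalth}''; you have simply written out the details that the paper leaves implicit.
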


\begin{proof}
First, if we assume that $\|{\bf{h}}^{-1/p'(x)}\[u\]\|_{\lpi}\le
1$, we can prove using Lemma \ref{infcontloc} and proceeding as in
Lemma \ref{globalth} that,
$$\int_{\O} |R_h(u)|^{p(x)}\, dx\le C. $$

Finally, taking  $v=u
\left(\|{\bf{h}}^{-1/p'(x)}\[u\]\|_{\lpi}\right)^{-1}$, we obtain
the desired result.
\end{proof}

\section{Convergence of the method}\label{cm2}

In this section we first prove the  broken Poincar\'e Sobolev
inequality which is crucial to get  compactness. We also prove the
coercivity of the functional and we finally arrive at the proof of
Theorem \ref{conv}.

\begin{teo}\label{brok-poncare}
    Let $p:\O\to[1,+\infty)$ be $\log-$H\"{o}lder continuous in
    $\overline{\O}.$
    There exists a constant $C$ such that,
    $$
    \|u-(u)_{\O}\|_{\lpe}\leq C |u|_{\wpt}\quad\forall u\in S^k(\th)
    \quad\forall h\in(0,1],$$
    where $(u)_{\O}=\frac1{|\O|}\int_{\Omega} u \, dx.$
    In particular,
    $$
    \|u\|_{\lpe}\leq C \Big(\|u\|_{L^1(\O)}+|u|_{\wpt}\Big)\quad
    \forall u\in S^k(\th) \quad\forall h\in(0,1].$$
\end{teo}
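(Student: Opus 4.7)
The natural plan is to use the reconstruction operator $Q_h$ from Section~\ref{qh} to reduce to the conforming Poincar\'e inequality. The point is that $Q_h(u)\in W^{1,\infty}(\O)\subset\wp$, so Theorem~\ref{poinc2} applies directly to $Q_h(u)$, while the error $u-Q_h(u)$ is controlled in $\lpe$ by the broken seminorm via the estimates of Section~\ref{qh}.

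First I would write the decomposition
$$
u - (u)_\O \;=\; \bigl(u-Q_h(u)\bigr) \;+\; \bigl(Q_h(u) - (Q_h(u))_\O\bigr) \;+\; \bigl((Q_h(u))_\O - (u)_\O\bigr),
$$
and estimate the three pieces separately in $\lpe$. For the first piece, Remark~\ref{rem25} applied with $q=p^*$ gives $\gamma=1$, hence $h^{N(1-\gamma)}=1$, so
$$
\|u-Q_h(u)\|_{\lpe} \le C\,|u|_{\wpt}.
$$
For the middle piece, since $Q_h(u)\in\wp$, the conforming Poincar\'e--Sobolev inequality (Theorem~\ref{poinc2} with $q=p^*$) combined with the gradient bound $\|\nabla Q_h(u)\|_{\lp}\le C|u|_{\wpt}$ from Remark~\ref{rem25} yields
$$
\|Q_h(u) - (Q_h(u))_\O\|_{\lpe} \le C\,\|\nabla Q_h(u)\|_{\lp} \le C\,|u|_{\wpt}.
$$
For the third piece (a constant function), H\"older's inequality in variable exponent spaces together with Proposition~\ref{equi}(5) gives
$$
\bigl|(Q_h(u))_\O - (u)_\O\bigr| \;\le\; \tfrac{1}{|\O|}\|u-Q_h(u)\|_{L^1(\O)} \;\le\; C\,\|u-Q_h(u)\|_{\lpe},
$$
which is already controlled by $|u|_{\wpt}$ by the first step. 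Adding the three estimates via Proposition~\ref{equi}(6) finishes the first inequality.

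The second inequality then follows at once by the triangle inequality applied to $u = (u - (u)_\O) + (u)_\O$, bounding $|(u)_\O|\le |\O|^{-1}\|u\|_{L^1(\O)}$ and $\|1\|_{\lpe}$ by a constant depending only on $\O$, $p_1$, $p_2$. The only delicate point is that the constants in Remark~\ref{rem25} and Theorem~\ref{poinc2} must be independent of $h$: this is built into Corollary~\ref{globalthfin} (hence Remark~\ref{rem25}) and is the reason the log-H\"older hypothesis on $p$ is used throughout Section~\ref{qh}. The main subtlety I expect is just keeping track of the variable-exponent bookkeeping (using Proposition~\ref{equi} to pass between the modular and the Luxemburg norm without losing uniformity in $h$), rather than any substantive new difficulty beyond the machinery already developed.
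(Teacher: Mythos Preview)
Your proposal is correct and follows essentially the same route as the paper: the paper also splits $u-(u)_\O$ into the three pieces $(u-Q_h(u))+(Q_h(u)-(Q_h(u))_\O)+((Q_h(u))_\O-(u)_\O)$, then invokes Remark~\ref{rem25} for the first and third pieces and Theorem~\ref{poinc2} together with the gradient bound from Remark~\ref{rem25} for the middle one. One minor quibble: the three estimates are combined simply by the triangle inequality for the Luxemburg norm, not by Proposition~\ref{equi}(6), which concerns decompositions of the domain rather than sums of functions.
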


\begin{proof}
We begin by observing that
$$
\|u-(u)_{\O}\|_{\lpe}\le\|u-Q_h(u)\|_{\lpe}+\|Q_h(u)-(Q_h(u))_{\O}\|_{\lpe}+
C\|Q_h(u)-u\|_{L^1(\O)}.
$$
Then, using the Remark \ref{rem25} and Theorem \ref{poinc2}, we
have
    $$
    \|u-(u)_{\O}\|_{\lpe}\leq C |u|_{\wpt}\quad\forall u\in S^k(\th)
    \quad\forall h\in(0,1].
    $$
    The proof is complete.
\end{proof}

\begin{teo}\label{cm}
For each $h\in(0,1],$ let  $u_h\in \wpt$. If there exists a
constant $C$ independent of $h$ such that ${I}_h(u_h)\le C$ for
all $h\in (0,1]$, then
$$
\sup_{h\in(0,1]}\left(\|u_h\|_{L^1(\Omega)}+|u_h|_{\wpt}\right)<\infty.
$$
Moreover,
$$\sup_{h\in(0,1]}\int_{\partial\O}|u_h-u_D|^{p(x)}{\bf{h}}^{1-p(x)}\,dS<\infty.$$

\end{teo}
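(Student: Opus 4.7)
The plan is to read off five nonnegative modular bounds from the hypothesis $I_h(u_h)\le C$ and then convert them into the desired norm bounds using the tools developed in Sections \ref{appA1}--\ref{lifop}. Looking at the definition of $I_h$, these modulars are
\begin{gather*}
\int_\O |\nabla u_h+R_h(u_h)|^{p(x)}\,dx\le C, \quad \int_\O |u_h-\xi|^{q(x)}\,dx\le C,\\
\int_{\Gamma_D}|u_h-u_D|^{p(x)}{\bf h}^{1-p(x)}\,dS\le C, \quad \int_{\Gamma_{int}}|\[u_h\]|^{p(x)}{\bf h}^{1-p(x)}\,dS\le C,\\
\int_{\Gamma_N}|u_h|^{r(x)}\,dS\le C.
\end{gather*}
The third of these gives the ``Moreover'' statement (on the $\Gamma_D$ portion of $\partial\O$) for free.

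To bound the seminorm $|u_h|_{\wpt}=\|\nabla u_h\|_{\lp}+\|\[u_h\]\,{\bf h}^{-1/p'(x)}\|_{\lpi}$, the starting point is the identity $1-p(x)=-p(x)/p'(x)$, which lets us rewrite the fourth modular above as $\int_{\Gamma_{int}}(|\[u_h\]|\,{\bf h}^{-1/p'(x)})^{p(x)}\,dS\le C$. Proposition \ref{equi} then produces the Luxemburg bound $\|\[u_h\]\,{\bf h}^{-1/p'(x)}\|_{\lpi}\le C$, and Lemma \ref{lifcont} yields $\|R_h(u_h)\|_{\lp}\le C$. Converting the first modular bound to a Luxemburg bound via Proposition \ref{equi} gives $\|\nabla u_h+R_h(u_h)\|_{\lp}\le C$, and the triangle inequality closes out the estimate on $\|\nabla u_h\|_{\lp}$.

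For the $L^1(\O)$ bound, Proposition \ref{equi} converts the data-fidelity modular into $\|u_h-\xi\|_{\lq}\le C$; since $q(x)\ge 1$ and $|\O|<\infty$, Theorem \ref{imb} supplies the embedding $L^{q(\cdot)}(\O)\hookrightarrow L^1(\O)$, so that $\|u_h-\xi\|_{L^1(\O)}\le C$, and the triangle inequality with $\xi\in L^{q(\cdot)}(\O)\subset L^1(\O)$ delivers the claim.

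The only step with genuine content is the coupling between the $(\nabla u_h+R_h(u_h))$-modular and the jump modular: this is precisely where Lemma \ref{lifcont} is indispensable, and it is the reason a local characterization of the lifting operator had to be developed in Section \ref{lifop} in the variable-exponent setting (the usual inf--sup argument being restricted to the case of constant $p$). Everything else is routine conversion between modulars and Luxemburg norms via Proposition \ref{equi}.
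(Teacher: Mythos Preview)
Your proof is correct. The bound on the broken seminorm $|u_h|_{\wpt}$ follows the paper's argument essentially verbatim: read off the jump modular from $I_h(u_h)\le C$, invoke Lemma~\ref{lifcont} to control $R_h(u_h)$, then disentangle $\nabla u_h$ from $\nabla u_h+R_h(u_h)$ (the paper does this at the modular level via the inequality in Proposition~\ref{desip}, you do it at the norm level via the triangle inequality; these are equivalent here).

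Where you genuinely diverge is the $L^1(\Omega)$ bound. The paper does \emph{not} touch the fidelity term $\int_\O|u_h-\xi|^{q(x)}\,dx$ at all: instead it appeals to the Friedrichs inequality in $BV$ together with Lemma~\ref{cotaD} to write
\[
\|u_h\|_{L^1(\O)}\le C\Big(|u_h|_{\wpt}+\int_{\Gamma_D}|u_h|\,dS\Big),
\]
and then controls the boundary integral through the Dirichlet penalty term and $u_D$. Your route---reading $\|u_h-\xi\|_{\lq}\le C$ off the fidelity modular and embedding $L^{q(\cdot)}(\O)\hookrightarrow L^1(\O)$---is shorter and avoids the $BV$ machinery entirely. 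The trade-off is robustness: the paper's argument extracts coercivity purely from the DG structural terms (gradient, interior jumps, Dirichlet penalty) and would survive if the fidelity term were absent or replaced by something that does not dominate an $L^1$ norm, whereas your argument leans on the particular lower-order term present in this functional.
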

\begin{proof}
Since $I_h(u_h)\le C$ then,
$\|{\bf{h}}^{-1/p'(x)}\[u_h\]\|_{L^{p(\cdot)}(\Gamma_{int})}\le
C$. And, by Lemma \ref{lifcont} and Proposition \ref{equi}, we
have
$$\int_{\O}|R_h(u_h)|^{p(x)}\, dx \le C.$$
Using the third inequality in Proposition \ref{desip} and the
above inequality, we get
\begin{align*}\int_{\O}|R_h(u_h)+\nabla u_h|^{p(x)}\, dx
\ge 2^{1-p_2} \int_{\O}|\nabla u_h|^{p(x)}\, dx-C.\end{align*}
Therefore,
$$I_h(u_h)+C\ge 2^{1-p_2} \int_{\O}|\nabla u_h|^{p(x)}\, dx
+\int_{\Gamma_D}|u_h-u_D|^{p(x)}{\bf{h}}^{1-p(x)}\,dS
+\int_{\Gamma_{int}}|\[u_h\]|^{p(x)}{\bf{h}}^{1-p(x)}\, dS.$$
Thus, as ${I}_h(u_h)\le C,$ we obtain that $\di|u_h|_{\wpt}$ and
$\di\int_{\partial\O}|u_h-u_D|^{p(x)}{\bf{h}}^{1-p(x)}\,dS$ are
uniformly bounded.

Finally, by Friedrichs inequality for BV, Lemma \ref{cotaD},
H\"{o}lder inequality, Proposition \ref{equi} and the fact that
${\bf h}\leq 1$ we have,
\begin{align*}
\|u_h\|_{L^1(\O)}\leq & C
\left(|u_h|_{\wpt}+\int_{\Gamma_D}|u_h|\, dS\right)\\ \leq
 & C \left(|u_h|_{\wpt}+\int_{\Gamma_D}|u_D|\, dS+\|(u_h-u_D) {\bf{h}}^{-1/p'(x)}\|_{\lpd}\|{\bf{h}}^{1/p'(x)}\|_{L^{p'(\cdot)}(\Gamma_D)}\right)
 \\ \leq
 & C \left(|u_h|_{\wpt}+\int_{\Gamma_D}|u_D|\, dS+\|(u_h-u_D) {\bf{h}}^{-1/p'(x)}\|_{\lpd}\right).
 \end{align*}

This completes the proof.
\end{proof}



\begin{lema}\label{lqr}
 Let  $p,$ $s$ and $t$ be functions satisfying (H1), (H2) and (H3) respectively. Let $u_h\in
 S^k(\th)$ be under the
conditions of Theorem \ref{cm}. Then, there exist a sequence
$h_j\to0$ and a function $u\in\wp$ such  that
\begin{align}
    \label{cbv}u_{h_j}&\stackrel{*}{\rightharpoonup} u\quad\ \ \mbox{weakly* in } BV(\O)\\
    \label{crh}\nabla u_{h_j}+R_h(u_{h_j})&\rightharpoonup \nabla u \quad\mbox{weakly in } \lp,\\
    \label{lq2}u_{h_j}&\rightarrow u\quad\ \ \mbox{strongly in } L^{s(\cdot)}(\O),\\
    \label{lqb} u_{h_j}&\rightarrow   u\quad\ \ \mbox{strongly in } L^{t(\cdot)}(\partial\O).
\end{align}
\end{lema}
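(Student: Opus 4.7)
The plan is to extract convergent subsequences from the uniform bounds provided by Theorem~\ref{cm}, to identify the weak limit of the broken gradient $\nabla u_h+R_h(u_h)$ as a distributional gradient via a test-function argument built on the definition of the lifting operator, and to handle the boundary traces through the reconstruction $Q_h$.

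First I would combine Lemma~\ref{cotaD} with the $L^1(\O)$ and seminorm bounds of Theorem~\ref{cm} to conclude that $(u_h)_{h\in(0,1]}$ is uniformly bounded in $BV(\O).$ Standard $BV$ compactness then yields a subsequence $(u_{h_j})$ and $u\in BV(\O)$ with \eqref{cbv} and $u_{h_j}\to u$ strongly in $L^1(\O).$ The broken Poincar\'e-Sobolev inequality of Theorem~\ref{brok-poncare} shows that $(u_h)$ is uniformly bounded in $L^{p^*(\cdot)}(\O);$ interpolating the $L^1$ strong convergence against this bound by Theorem~\ref{interp} and invoking the gap in (H2) yields \eqref{lq2}.

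For \eqref{crh}, the seminorm bound of Theorem~\ref{cm} together with Lemma~\ref{lifcont} shows that $\nabla u_h$ and $R_h(u_h)$ are both bounded in $L^{p(\cdot)}(\O).$ By reflexivity (Theorem~\ref{ref}), a further subsequence of $\nabla u_{h_j}+R_h(u_{h_j})$ converges weakly in $L^{p(\cdot)}(\O)^N$ to some $G.$ The main technical step is to identify $G=\nabla u.$ For any $\phi\in C_c^\infty(\O)^N,$ elementwise integration by parts gives
\begin{equation*}
\int_\O \nabla u_h\cdot\phi\, dx=-\int_\O u_h\,\mathrm{div}\,\phi\, dx+\int_{\gi}\[u_h\]\cdot\{\phi\}\, dS,
\end{equation*}
and, choosing a projection $\phi_h\in S^l(\th)^N$ of $\phi$ with $\|\phi-\phi_h\|_{L^\infty(\kappa)}\le C h_\kappa\|\phi\|_{C^1(\O)},$ the defining identity for $R_h$ applied to $\phi_h$ yields
\begin{equation*}
\int_\O R_h(u_h)\cdot\phi\, dx=\int_\O R_h(u_h)\cdot(\phi-\phi_h)\, dx-\int_{\gi}\[u_h\]\cdot\{\phi_h\}\, dS.
\end{equation*}
Adding the two identities, the interfacial remainder is controlled by H\"older's inequality and the uniform bound on $\|\mathbf{h}^{-1/p'(x)}\[u_h\]\|_{\lpi},$ while the volume remainder is controlled by $\|R_h(u_h)\|_{\lp}\|\phi-\phi_h\|_{\lpp};$ both vanish as $h\to 0.$ The $L^1$ convergence of $u_h$ then identifies the limit with $-\int_\O u\,\mathrm{div}\,\phi\, dx,$ forcing $G=\nabla u$ distributionally and in particular $u\in\wp.$

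Finally, for \eqref{lqb} I would exploit the reconstruction $Q_h(u_h).$ Corollary~\ref{globalthfin} applied to the $\wpt$ seminorm bound gives both $\|u_h-Q_h(u_h)\|_{L^{t(\cdot)}(\partial\O)}\to 0$ (continuity of $t$ on the compact set $\partial\O$ together with the strict inequality in (H3) forces $\beta<1$) and a uniform $\wp$ bound on $Q_h(u_h).$ The strong $L^1(\O)$ convergence of $Q_h(u_h)$ to $u,$ combined with the weak $\wp$ compactness, identifies $u$ as the weak $\wp$ limit of $Q_h(u_h);$ the compact trace embedding of Theorem~\ref{traza} then upgrades this to strong convergence of $Q_h(u_h)$ in $L^{t(\cdot)}(\partial\O),$ and adding the two contributions delivers \eqref{lqb}.
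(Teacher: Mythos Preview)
Your argument is correct and, for \eqref{cbv}, \eqref{lq2} and \eqref{lqb}, follows the paper's route exactly: $BV$ compactness plus the broken Poincar\'e--Sobolev inequality and interpolation for \eqref{lq2}, and the detour through the reconstruction $Q_h(u_h)$ for the boundary traces. One small technical point: Corollary~\ref{globalthfin} is stated for exponents $q$ that are log-H\"older on $\overline{\O}$ and satisfy $p\le q\le p_*$, whereas $t$ is merely $C^0(\partial\O)$ with $1\le t<p_*$. The paper handles this by first picking a log-H\"older $\bar t:\O\to[1,\infty)$ with $t\le\bar t<p_*$ (so that $\beta<1$) and applying the corollary with $q=\bar t$; you should insert the same one-line fix.

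The genuine difference is your treatment of \eqref{crh}. The paper does not carry out the test-function computation; it simply invokes Theorem~5.2 of \cite{BO} with the constant exponent $p_1$ to obtain both \eqref{cbv} and the weak identification of the limit of $\nabla u_{h_j}+R_h(u_{h_j})$ with $\nabla u$, and then uses the uniform $\lp$-bound on $R_h(u_h)$ (Lemma~\ref{lifcont}) together with reflexivity to upgrade the weak $L^{p_1}$ convergence to weak $\lp$ convergence. Your route is more self-contained: you re-derive the identification directly in $\lp$ from the very definition of $R_h$, via elementwise integration by parts and a projected test field $\phi_h$, controlling the interfacial and volume remainders with the available jump and lifting bounds. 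Both work; the paper's shortcut is more economical, while your argument makes transparent why the lifting corrects the broken gradient and avoids the external citation.
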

\begin{proof}
    We first observe that, by Theorem \ref{cm}, we have that
     $$
     \sup_{h\in(0,1]}(\|u_h\|_{L^1(\Omega)}+|u_h|_{\wpt})<\infty \quad \forall h\in(0,1].
     $$
    Then, the proofs of \eqref{cbv} and \eqref{crh} follows by applying Theorem 5.2 in \cite{BO} with value $p_1$, and  using that $R_h(u_h)$ is
    bounded in $\lp,$ see Lemma \ref{lifcont}.

    We now prove \eqref{lq2}. By \eqref{cbv} and the compactness of the embedding $BV(\O)\subset L^1(\O),$ there
exists a subsequence of $u_{h_j},$ still denoted by $u_{h_j},$
such that
$$u_{h_j}\to u \mbox{ in }L^1(\O).$$
Since $\|u_{h_j}\|_{L^1}+|u_{h_j}|_{\wpt}$ is bounded, by Theorem
\ref{brok-poncare}, $\|u_{h_j}\|_{\lpe}$ is bounded, and by
Theorem \ref{embed}, $u\in\wp\subset \lpe$. Therefore, using
Theorem \ref{interp}, we obtain that
\begin{equation}
    u_{h_j}\to u \quad \mbox{ in } L^{s(\cdot)}(\O),
    \label{auxx0}
\end{equation}
for all $s$ satisfying (H2).

Finally, we prove \eqref{lqb}. We begin by observing that, by
Corollary \ref{globalthfin},
\begin{equation}
    \|u_h-Q_h(u_h)\|_{\lp}\to 0 \mbox{ as } h\to0
    \label{auxx1}
\end{equation}
and $\{Q_h( u_h)\}$ is bounded in $\wp.$ Then, there exists
$v\in\wp$ and subsequence $\{Q_{h_j}( u_{h_j})\}$ such that
$Q_{h_j}(u_{h_j}) \rightharpoonup v  $ weakly in $\wp.$ Therefore,
by \eqref{auxx0} and \eqref{auxx1}, $v=u.$

Using Theorem \ref{traza},
\begin{equation}
Q_{h_j}(u_{h_j})\to u \mbox{ strongly in }
L^{t(\cdot)}(\partial\O).
    \label{auxx2}
\end{equation}

Now, taking $\bar{t}:\O\to [1,\infty)$  log-H\"{o}lder with $t\leq \bar{t}<p_*$ and by
Corollary \ref{globalthfin},  we get
$Q_{h_j}(u_{h_j})-u_{h_j}\to 0$ strongly in
$L^{t(\cdot)}(\partial\O).$  Therefore, $u_{h_j}\to u$ in
$L^{t(\cdot)}(\partial\O)$.
\end{proof}

\medskip

Before  proving the convergence of the minimizers, we need an
auxiliary lemma. It is in  this step  where we need more
regularity of the boundary data.

\begin{lema}\label{auxiliar}
Let $h\in (0,1]$, and $p:\Omega\to(1,\infty)$  satisfying (H1).
Assume that $u_D\in W^{2,p_2}(\O)$ and let $v\in
W^{2,p_2}(\O)\cap\A$ then, there exists $v_h\in U^1(\th)$, such
that
$$
\di \|v_h-v\|_{\wp}\to 0 \quad \mbox{ as } h\to 0,
$$
and $$I_h(v_h)\to I(v)\quad \mbox{ as } h\to 0.$$
\end{lema}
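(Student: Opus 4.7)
Let $v_h:=\Ih v\in U^1(\th)$ be the standard continuous piecewise--linear (Lagrange) nodal interpolant of $v$; it is well defined because $v\in W^{2,p_2}(\O)$ embeds into $C(\overline\O)$ in the relevant range of exponents (otherwise one uses a Scott--Zhang--type operator preserving the nodal values that lie outside a set of measure zero). Since $v_h\in C(\overline\O)$, the jumps $\[v_h\]$ vanish on $\gi$, so Definition \ref{lifting} gives $R_h(v_h)\equiv 0$ and the interior penalty term disappears. Hence
$$
I_h(v_h)=\int_\O\bigl(|\nabla v_h|^{p(x)}+|v_h-\xi|^{q(x)}\bigr)dx+J_h+\int_{\Gamma_N}|v_h|^{r(x)}dS,
$$
with $J_h:=\int_{\Gamma_D}|v_h-u_D|^{p(x)}{\bf h}^{1-p(x)}\,dS$.

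For the first claim of the lemma, standard interpolation theory on shape--regular meshes yields, for each $\kappa\in\th$,
$$
\|v-v_h\|_{L^{p_2}(\kappa)}+h_\kappa\|\nabla(v-v_h)\|_{L^{p_2}(\kappa)}\le Ch_\kappa^{2}|v|_{W^{2,p_2}(\kappa)},
$$
so that $\|v-v_h\|_{W^{1,p_2}(\O)}=O(h)$. Applying Theorem \ref{imb} (since $p(x)\le p_2$) promotes this to $\|v_h-v\|_{\wp}\to 0$. This strong $\wp$-convergence, combined with Theorems \ref{embed} and \ref{interp} (using $\xi\in\lq$ and (H2)), gives $v_h\to v$ in $\lq$, and Theorem \ref{traza} gives $v_h\to v$ in $L^{r(\cdot)}(\partial\O)$. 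Proposition \ref{debil}(2) applied to the three modulars then produces
$$
\int_\O|\nabla v_h|^{p(x)}dx\to\int_\O|\nabla v|^{p(x)}dx,\quad \int_\O|v_h-\xi|^{q(x)}dx\to\int_\O|v-\xi|^{q(x)}dx,
$$
and similarly for the Neumann term, so three of the four surviving contributions to $I_h(v_h)$ already converge to their counterparts in $I(v)$.

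The main obstacle is $J_h$. Because $v\in\A$, the trace of $v-u_D$ on $\Gamma_D$ vanishes, so $v=u_D$ there and $v_h-u_D=v_h-v$, giving $J_h=\int_{\Gamma_D}|v_h-v|^{p(x)}{\bf h}^{1-p(x)}\,dS$. The plan is to combine the scaled reference--element trace inequality
$$
\|w\|_{L^{p_2}(e)}^{p_2}\le C\bigl(h_\kappa^{-1}\|w\|_{L^{p_2}(\kappa)}^{p_2}+h_\kappa^{p_2-1}\|\nabla w\|_{L^{p_2}(\kappa)}^{p_2}\bigr),
$$
applied to $w=v-v_h$, with the interpolation estimates above to get, on every $e\subset\partial\kappa\cap\Gamma_D$,
$$
\|v-v_h\|_{L^{p_2}(e)}^{p_2}\le Ch_\kappa^{2p_2-1}|v|_{W^{2,p_2}(\kappa)}^{p_2}.
$$
The log--H\"older condition, via Proposition \ref{papa}, then lets one replace the constant exponent $p_2$ on $e$ by the variable exponent $p(x)$ up to a uniform constant and bound ${\bf h}^{1-p(x)}\lesssim h_\kappa^{1-p_2}$, yielding
$$
\int_e|v-v_h|^{p(x)}{\bf h}^{1-p(x)}dS\le Ch_\kappa^{\,p_2}|v|_{W^{2,p_2}(\kappa)}^{p_2}.
$$
Summing over the boundary faces gives $J_h\le Ch^{p_2}|v|_{W^{2,p_2}(\O)}^{p_2}\to 0$. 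It is precisely here that the hypothesis $u_D\in W^{2,p_2}(\O)$ (hence $v\in W^{2,p_2}(\O)$) is essential: lower regularity would not furnish enough $h$--decay to absorb the singular weight ${\bf h}^{1-p(x)}$. Putting the four convergences together yields $I_h(v_h)\to I(v)$.
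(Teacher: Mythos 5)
Your construction of $v_h$, the observations that $\[v_h\]=0$ and $R_h(v_h)=0$, the upgrade of the $W^{1,p_2}$ interpolation error to $\wp$ via Theorem \ref{imb}, and the treatment of the volume and Neumann terms through Proposition \ref{debil}(2) all coincide with the paper's argument. The gap is in your treatment of $J_h$. After reducing to $\int_{\Gamma_D}|v-v_h|^{p(x)}{\bf h}^{1-p(x)}\,dS$ and proving the constant-exponent face estimate, you assert that Proposition \ref{papa} ``lets one replace the constant exponent $p_2$ on $e$ by the variable exponent $p(x)$ up to a uniform constant.'' This step does not work as stated, for two reasons. First, Proposition \ref{papa} only compares values of $p$ at points lying in a common set of diameter $h$, so it relates $p(x)$ to the \emph{local} extrema $p_{\pm}^{\kappa}$, never to the global supremum $p_2$. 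Second, and more seriously, the inequality goes the wrong way: the quantity being exponentiated is $A=|v-v_h|(x)$, which is small (of order $h^2$ in mean), and for $A<1$ one has $A^{p(x)}\ge A^{p_2}$, so $\int_e|v-v_h|^{p(x)}\,dS$ cannot be bounded pointwise by $C\int_e|v-v_h|^{p_2}\,dS$. Proposition \ref{papa}(2) requires the lower bound $A\ge h^{\alpha}$ precisely to exclude this regime, and no such lower bound is available here; the crude fallback $A^{p(x)}\le A^{p_1}$ for $A\le1$ does not recover enough powers of $h$ to absorb ${\bf h}^{1-p_2}$ when $p_2$ is large.

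The paper closes this step by a different device, which you should adopt: apply the variable-exponent H\"older inequality with exponent $p_2/p(x)\ge1$ to get
$$
\int_{\Gamma_D}|v-v_h|^{p(x)}{\bf h}^{1-p(x)}\,dS\le C\,\bigl\| |v-v_h|^{p(\cdot)}{\bf h}^{(1-p_2)p(\cdot)/p_2}\bigr\|_{L^{p_2/p(\cdot)}(\partial\O)},
$$
the conjugate factor being bounded because ${\bf h}\le1$ and the leftover exponent $1-p(x)-(1-p_2)p(x)/p_2=1-p(x)/p_2$ is nonnegative. The $L^{p_2/p(\cdot)}$ modular of the function on the right is exactly $\int_{\partial\O}|v-v_h|^{p_2}{\bf h}^{1-p_2}\,dS$, which your (correct) constant-exponent estimates show tends to $0$ after summation over faces; Proposition \ref{equi}(4) then converts the vanishing of the modular into the vanishing of the Luxemburg norm, hence of $J_h$. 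With this substitution your argument is complete and is essentially the paper's proof.
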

\begin{proof}
Since $p$ is log-H\"{o}lder, we have that
$C^{\infty}(\bar{\Omega})$ are dense in $\wp$. Then the first part
follows  by standard approximation theory, see in Theorem 3.1.5
\cite{Ci}.

Moreover, $v_h$ satisfies
\begin{equation}\label{interpol}
\|v-v_h\|_{L^{p_2}(\partial\kappa)}\leq C
\|v-v_h\|_{W^{1,p_2}(\kappa)}\leq C h_{\kappa} \|D^2
v\|_{L^{p_2}(\kappa)}\end{equation} for each $\kappa\in\th$.
Using Remark \ref{comp} and  summing over all $e\in \partial\O,$
we have
\begin{equation}\label{desip2}\int_{\partial\O} |v-v_h|^{p_2} {\bf{h}}^{1-p_2}\, ds \leq Ch \|D^2 v\|^{p_2}_{L^{p_2}(\O)}.\end{equation}
In addition, by H\"{o}lder inequality and since $h\leq 1$, we have
$$\int_{\partial\O}|v-v_h|^{p(x)} {\bf{h}}^{1-p(x)}\, ds \leq C\| |v-v_h|^{p(\cdot)} {\bf{h}}^{(1-p_2)p(\cdot)/p_2}\|_{L^{p_2/p(\cdot)}(\partial\O)}.$$
Since
$$ \int_{\partial\O}(|v-v_h|^{p(x)} {\bf{h}}^{(1-p_2)p(x)/p_2})^{p_2/p(x)}\, ds=
\int_{\partial\O}|v-v_h|^{p_2} {\bf{h}}^{(1-p_2)}\ ds,$$ then, by
\eqref{desip2}
$$
\int_{\partial\O}|v-v_h|^{p(x)} {\bf{h}}^{1-p(x)}\, ds\to 0 \quad
\mbox{ as } h\to 0.$$ Since $v_h\in \wp$ then $\[v_h\]=0$ and
$R_h(v_h)=0$. Finally, using \eqref{interpol} and Theorem
\ref{debil}, we obtain the desired result.
\end{proof}

Now, we are in a condition to prove Theorem \ref{conv}.

\begin{proof}[Proof of Theorem \ref{conv}]
By Lemma \ref{auxiliar} there exist $w_h\in  U^1(\th)$ such that
 $w_h\to u_D$ strongly in $\wp$ and
 $I_h(w_h)\to I(u_D)$. Therefore, since $I_h(u_h)\leq I_h(w_h)$, we have that  $I_h(u_h)$ is bounded.

Then, by Theorem \ref{cm} and Lemma \ref{lqr} there exist a
subsequence $u_{h_j}$ and $u\in \wp$ such that
\begin{equation}\label{limites}\begin{aligned}
u_{h_j}&\stackrel{*}{\rightharpoonup} u\quad\ \ \mbox{weakly* in } BV(\O),\\
\nabla u_{h_j}+R_h(u_{h_j})&\rightharpoonup \nabla u \quad\mbox{weakly in } \lp,\\
 u_{h_j}&\to u  \quad\mbox{strongly in } L^{s(\cdot)}(\O),\quad \forall s\ \mbox{satisfying } (H2)\\
  u_{h_j}&\to u  \quad\mbox{strongly in } L^{t(\cdot)}(\partial\O), \quad \forall t\ \mbox{satisfying } (H3).
\end{aligned}\end{equation}

On the other hand, since the penalty term,
$$\int_{\Gamma_D} {\bf{h}}^{1-p} |u_h-u_D|^p\, dS$$
is bounded, we have that
$$\|u-u_D\|_{\lpbg}\leq \|u-u_{h_j}\|_{\lpbg}+\|u_{h_j}-u_D\|_{\lpbg}\to 0. $$
 Then $u\in\mathcal{A}$.

Taking $s=q$ and $t=r$ in \eqref{limites}, by Proposition \ref{debil} we have
 \begin{equation}\label{liminf2}
\begin{array}{lll}
    I(u)&\leq &\displaystyle
\liminf_{j\to\infty}\left[\int_{\O}
     \left(|\nabla u_{h_j}+R_h(u_{h_j})|^{p(x)}+|u_{h_j}-\xi|^{q(x)}\right)\, dx
+\int_{\Gamma_N} |u_{h_j}|^{r(x)}\, dS\right]\\[8pt]
&\le&\displaystyle
 \liminf_{j\to\infty}I_{h_j}(u_{h_j})\le \limsup_{j\to\infty}I_{h_j}(u_{h_j}).
 \end{array}
\end{equation}

Now, we want to prove that $u$ is the minimizer of $I$. Let
$v\in\A\cap W^{2,p_2}(\O)$, and let $v_h\in U^1(\th)$ as in Lemma
\ref{auxiliar}. Then  $I_h(v_h)\to I(v)$. Therefore, by
\eqref{liminf2}
\begin{equation}\label{loquesale}I(u)\leq \liminf_{j\to\infty}I_{h_j}(u_{h_j})\le \limsup_{j\to\infty}I_{h_j}(u_{h_j})\le \lim_{j\to\infty}I_{h_j}(v_{h_j})=I(v).\end{equation}

Now, let $w\in  \A$, then for any $\ep>0$ there exists $v\in\A\cap
W^{2,p_2}(\O)$ such that $\|v-w\|_{\wp}<\ep$. By Theorem
\ref{debil} we have that $I(v)<I(w)+\ep$, therefore by
\eqref{loquesale}
$$I(u)\leq \liminf_{j\to\infty}I_{h_j}(u_{h_j})\le \limsup_{j\to\infty}I_{h_j}(u_{h_j})\le I(w)+\ep.$$
Taking $\ep\to 0$, we get
$$I(u)\leq \liminf_{j\to\infty}I_{h_j}(u_{h_j})\le \limsup_{j\to\infty}I_{h_j}(u_{h_j})\le I(w)\quad \forall\, w \in \A.$$
 Therefore $I(u)\leq I(w)$.

 Moreover, taking $w=u,$ we have that all
 the inequalities in \eqref{liminf2} are equalities, therefore
we have that $I_{h_j}(u_{h_j})\to I(u)$. Then
$$\int_{\Gamma_{int}}|\[u_{h_j}\]|^{p(x)}{\bf{h_j}}^{1-p(x)}\, dS \rightarrow  0$$ and using Lemma \ref{lifcont}
we  have that $R_h(u_{h_j})\to 0$. This fact and \eqref{limites}
imply that $\nabla u_{h_j} \rightharpoonup \nabla u$ weakly in
$\lp$.

Since $u$ is the unique minimizer of $I,$  the whole sequence
$u_h$ converges to $u.$

Finally, since
$$\nabla u_h+R_h(u_h) \rightharpoonup\nabla u \mbox{ weakly in } \lp \mbox{ and }\int_{\O}(|\nabla u_h+R_h(u_h)|^{p(x)}\, dx \to \int_{\O}|\nabla u|^{p(x)}\, dx,$$
by Proposition  \ref{debil}, $\nabla u_h+R_h(u_h)\to \nabla u$
strongly in $\lp$. Therefore, since $R_h(u_h)\to 0$ strongly in
$\lp$, we get that $\nabla u_h\to \nabla u$ strongly in $\lp$.
\end{proof}

\section{The Continuous Galerkin Method}\label{confor}
In order to make a complete study of this problem, we prove the
convergence of the Continuous Galerkin finite element method for
our problem. In the next section, we make a comparison of the two
methods in an example.

For simplicity, we take the following functional:
$$
I(u)=\int_{\O}\left(\frac{|\nabla u|^{p(x)}}{p(x)}
+\frac{|u-\xi|^{q(x)}}{q(x)}\right)\,dx$$ with $q(x)$ satisfying
(H2). Then, since the functional $I$ is strictly convex and
coercive in $\A$ there exists a unique minimizer of the problem.

We take now a partition of $\O$ as in Hypothesis  \ref{omega} and
the usual conforming subspace  $U^k(\th)$  of $\wp$. This subspace
consists of all continuous functions such that they are
polynomials of degree at most $k$ in each $\kappa\in\th$ . We
assume that for some $h'$, $u_D\in U^k(\thp)$ (this assumption
replaces the one on Lemma \ref{auxiliar}).

Let now $h\leq h'$ and
$$
V^k_h=\{v_h\in U^k(\th) \colon v_h=u_D\mbox{ on } \partial\O\}.
$$
For simplicity, we  may assume that $h'=1$.

\begin{remark}\label{remsu}
Let  $\Pi_h:C_0^{\infty}(\O)\to U_h^k$ be the interpolant mapping
defined in Theorem 3.1.5 in \cite{Ci}. Then,we have that
$$
\|\Pi_h\phi-\phi\|_{W^{1,p(\cdot)}(\O)}\leq C
\|\Pi_h\phi-\phi\|_{W^{1,p_2}(\O)}\to 0,
$$
for any $\phi\in C_0^{\infty}({\O}).$ We also have, by the
continuity of $I,$ that  $I(\Pi_h\phi+u_D)\to I(\phi+u_D)$ as
$h\to 0.$
\end{remark}

By the strict convexity of $I$, for each $h\in(0,1]$ there exists
a function $u_h\in V_h^k$ such that $u_h$ is a minimizer in
$V^k_h$ of $I$.

\medskip

Now we prove the main result of this section.

\begin{teo}
    The sequence $\{u_h\}$ converges to $u$ strongly in $\wp$, where $u$ is the unique minimizer of $I$.
\end{teo}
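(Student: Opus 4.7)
The proof follows the standard Galerkin variational pattern: an a priori bound, weak subsequential convergence, identification of the limit through a density/minimality sandwich, and upgrade to strong convergence via the convex structure of the modulars.

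\emph{Step 1 (a priori bound).} Under the simplifying assumption $h'=1$, one has $u_D \in U^k(\th) \subset V^k_h$ for every $h\in(0,1]$, so minimality of $u_h$ yields $I(u_h) \leq I(u_D)$, a constant independent of $h$. Hence $\int_\O |\nabla u_h|^{p(x)}\,dx$ and $\int_\O |u_h-\xi|^{q(x)}\,dx$ are uniformly bounded. Writing $u_h = (u_h-u_D)+u_D$ with $u_h-u_D \in \wpgama$ (here $\Gamma_D=\partial\O$) and applying Proposition \ref{equi} together with the Poincar\'e inequality (Lemma \ref{poinc}) to the first summand gives $\|u_h\|_{\wp}\leq C$ uniformly.

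\emph{Step 2 (weak limit in $\A$).} By reflexivity of $\wp$ (Theorem \ref{ref}), extract a subsequence $u_{h_j}\rightharpoonup v$ weakly in $\wp$. The embedding $\wp\hookrightarrow\lpe$ (Theorem \ref{embed}) combined with Theorem \ref{interp} is compact into $\lq$, giving $u_{h_j}\to v$ strongly in $\lq$; similarly Theorem \ref{traza} yields $u_{h_j}\to v$ strongly in $L^{p(\cdot)}(\partial\O)$. Since the traces $u_{h_j}|_{\partial\O}=u_D$, we conclude $v=u_D$ on $\partial\O$, hence $v\in\A$.

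\emph{Step 3 (identification $v=u$).} By Proposition \ref{debil}(1) applied to the weakly convergent $\nabla u_{h_j}$, together with Proposition \ref{debil}(2) for the data-fidelity term (using strong $\lq$ convergence), we obtain
\[
I(v) \;\leq\; \liminf_{j\to\infty} I(u_{h_j}).
\]
For the upper bound, fix any $w\in\A$. Then $w-u_D\in\wpgama$, and using the log-H\"older density property (as recalled in the paper from \cite{D}) we approximate $w-u_D$ in the $\wp$-norm by functions $\phi_\varepsilon \in C_0^\infty(\O)$ with $\|\phi_\varepsilon-(w-u_D)\|_{\wp}<\varepsilon$. For $h$ small enough, $\Pi_h\phi_\varepsilon$ vanishes on $\partial\O$ (the Lagrange interpolant of a compactly supported smooth function), hence $\Pi_h\phi_\varepsilon+u_D \in V^k_h$. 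Minimality and Remark \ref{remsu} give
\[
\limsup_{j\to\infty} I(u_{h_j}) \;\leq\; \lim_{j\to\infty} I(\Pi_{h_j}\phi_\varepsilon + u_D) \;=\; I(\phi_\varepsilon+u_D),
\]
and continuity of $I$ on $\wp$ (Proposition \ref{debil}(2)) lets $\varepsilon\to 0$ to give $\limsup_j I(u_{h_j})\leq I(w)$. Combining, $I(v)\leq I(w)$ for every $w\in\A$, so strict convexity of $I$ forces $v=u$. Uniqueness of $u$ implies that the full sequence $u_h$ converges weakly to $u$ in $\wp$, and all the inequalities above are equalities, so $I(u_h)\to I(u)$.

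\emph{Step 4 (strong convergence).} Since $u_h\to u$ strongly in $\lq$, the data term converges by Proposition \ref{debil}(2), and hence $\int_\O \frac{|\nabla u_h|^{p(x)}}{p(x)}\,dx \to \int_\O \frac{|\nabla u|^{p(x)}}{p(x)}\,dx$. Applying the weak lsc argument of Proposition \ref{debil}(1) separately to the convex functionals $f\mapsto \int \frac{|f|^{p(x)}}{p(x)}\,dx$ and $f\mapsto\int|f|^{p(x)}\,dx$ sandwiches the bare modular, yielding $\int_\O |\nabla u_h|^{p(x)}\,dx \to \int_\O |\nabla u|^{p(x)}\,dx$. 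Together with $\nabla u_h \rightharpoonup \nabla u$ weakly in $\lp$, Proposition \ref{debil}(3) delivers $\nabla u_h \to \nabla u$ strongly in $\lp$. Combined with the $\lq$-convergence of $u_h$, this yields $u_h\to u$ in $\wp$.

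\emph{Main obstacle.} The delicate step is Step 4: moving from convergence of the energy $I(u_h)\to I(u)$ to strong convergence in $\wp$. The technical point is that Proposition \ref{debil}(3) demands convergence of the unweighted modular $\int |\nabla u_h|^{p(x)}\,dx$, whereas the energy only controls the weighted modular. The sandwich via weak lsc of \emph{both} convex modulars is the cleanest fix; a backup strategy is to invoke the strict monotonicity of $A(x,q)=|q|^{p(x)-2}q$ (Proposition \ref{desip}) together with the Euler--Lagrange equation for $u_h$ on $V^k_h$ tested against $\Pi_h\phi_\varepsilon+u_D-u_h$.
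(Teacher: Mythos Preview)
Your proof follows the same overall strategy as the paper's: uniform energy bound, weak subsequential limit identified as the minimizer via a density/liminf--limsup argument (the paper invokes Remark~\ref{remsu} exactly as you do), and upgrade to strong convergence through Proposition~\ref{debil}(3). The paper's own argument is in fact terser than yours---at the point corresponding to your Step~4 it simply asserts that ``by the convexity of $I$ and \eqref{weakconcon}'' one has $\int_\O |\nabla u_h|^{p(x)}\,dx \to \int_\O |\nabla u|^{p(x)}\,dx$---so your explicit discussion of the weighted--versus--unweighted modular issue is a genuine clarification of a step the paper leaves to the reader.

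That said, the ``sandwich'' in Step~4 as you have written it does not close: applying weak lower semicontinuity separately to $f\mapsto\int |f|^{p(x)}/p(x)\,dx$ and to $f\mapsto\int |f|^{p(x)}\,dx$ produces two \emph{liminf} inequalities, and neither yields an upper bound on $\limsup\int|\nabla u_h|^{p(x)}\,dx$. A clean fix in the spirit of your idea is to write
\[
\int_\O |\nabla u_h|^{p(x)}\,dx \;=\; p_2\int_\O \frac{|\nabla u_h|^{p(x)}}{p(x)}\,dx \;-\; \int_\O |\nabla u_h|^{p(x)}\Big(\frac{p_2}{p(x)}-1\Big)\,dx,
\]
observe that the weight $p_2/p(x)-1\ge 0$ makes the second functional convex and hence weakly lower semicontinuous, and conclude
\[
\limsup_h \int_\O |\nabla u_h|^{p(x)}\,dx \;\le\; p_2\int_\O \frac{|\nabla u|^{p(x)}}{p(x)}\,dx \;-\; \int_\O |\nabla u|^{p(x)}\Big(\frac{p_2}{p(x)}-1\Big)\,dx \;=\; \int_\O |\nabla u|^{p(x)}\,dx.
\]
Combined with the liminf from Proposition~\ref{debil}(1), this gives the modular convergence you need, and then Proposition~\ref{debil}(3) finishes exactly as you say. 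Your backup via the Euler--Lagrange equation and the monotonicity inequalities of Proposition~\ref{desip} would also work.
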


\begin{proof}
    Since $\{I(u_h)\}$ is uniformly bounded, there exists a subsequence of $\{u_h\}$ (still denoted by $\{u_h\}$) such that
    \begin{align}
        \label{weakconcon} u_{h}\rightharpoonup u &\mbox{ weakly in } \wp,\\
        \label{strongconcon} u_{h}\to u &\mbox{ strongly in } \lp.
    \end{align}

   As in the proof of Theorem \ref{conv}, using Remark \ref{remsu} instead of Lemma \ref{auxiliar}, we can prove that $u$ is  the minimizer of $I$ and that
   $I(u_h)\to I(u)$ as $h\to0.$ By the convexity of $I$ and \eqref{weakconcon}, we have that
   \begin{equation}
       \int_{\O}|\nabla u_h|^{p(x)}\ dx\to \int_{\O}|\nabla u|^{p(x)} \ dx \quad \mbox{as } h\to 0.
       \label{totalaux}
   \end{equation}
   Then, by \eqref{strongconcon} and \eqref{totalaux}, using Proposition \ref{debil} (3), we have that $\{u_h\}$ converges to $u$ strongly in $\wp$.

   Observe that, as in Theorem \ref{conv}, we can conclude that the whole sequence $\{u_h\}$ converges to $u$ strongly in $\wp.$
 \end{proof}

\section{One dimensional example}\label{examples}

In this section, we give an example in one dimension. Our idea is
to compare the Continuous Galerkin Finite Element Method (CGFEM)
versus the Discontinuous Galerkin Finite Element Method (DGFEM).
We will see in the following example that, if the function $p$
attains values close to  one,  our method converges faster to the
solution.

\medskip

Let $\O=(-1,1)$, $0<\ep,a<1$ and $p:[-1,1]\to [1,2]$ given by
 $$p(x)= \begin{cases}\frac{1-\ep}{a}|x|+1+\ep\quad& \mbox{ if
     } \quad \quad \quad |x|\leq a,\\[5pt]
 \quad \quad2 \quad& \mbox{ if }\quad  a\leq |x|\leq 1.
\end{cases}$$
For this function $p(x)$ and for a given $B>0,$ we study the
following problem,
\begin{equation}\label{dim1}\begin{cases} (|u'(x)|^{p(x)-1}u'(x))'=0
    \quad&\mbox{ in } (-1,1),\\
u(1)=-u(-1)=B.&\\ \end{cases}\end{equation}

We begin by observing that, since the operator is strictly
monotone, we have a unique solution of \eqref{dim1}. Moreover, the
solution satisfies  $|u'(x)|=C^{\frac{1}{(p(x)-1)}}$ for some
constant $C>0$. Therefore,  $|u'(x)|>\max\{C,C^{1/\ep}\}$  and,
using that $u\in C^{1,\alpha}([-1,1]),$ we have that $u'$ does not
change sign. Then, since $u(1)>u(-1)$ we obtain that $u'(x)>0$.

Thus,
\begin{align}\label{ulineal}
    u(x)=C(x+1)-B \quad&\mbox{ if } -1\leq x \leq
    -a,\\ \label{ulineal2}
    u(x)=C(x-1)+B \quad &\mbox{ if }\quad a\leq x \leq 1.
\end{align}
Since $p$ is even, we have that $u$ is odd, so $u(0)=0$.
Therefore, $u(x)=\int_0^x C^{\frac{1}{p(s)-1}}\, ds$ for all $x\in
[-a,a]$.

On the other hand, since the derivative of $u$ at zero has modulus
$C^{1/\ep}$, if $C>1$ we have
$$
\lim_{\ep\to0}|u'(0)|=+\infty.
$$
This is reasonable since we expect to have  big derivative when
$p$ approaches the value one.

\medskip

From now on, we take  $\ep=a=.01$ and since it is easier to get
$B$ from $C$,
we impose $C=1.3$. 
Then $B=\int_0^1 1.3^{\frac{100}{1+999s}}\, ds \simeq 1.03\ 10^6.$
Observe that in this case,
 $|u'(0)|=1.3^{100}\simeq 2.4 \times 10^{11}.$

\medskip

Now, we find the corresponding solution for the CGFEM and the
DGFEM. In both cases we take a uniform partition of $[-1,1]$ in
$n$ subintervals with size $2/n$ and $k=1$. We use the trapezoidal
numerical quadrature to compute the integrals appearing in the
discrete functionals. The analysis of these integration errors
falls beyond the scope of this paper.

Observe that for the continuous method, we impose the boundary
conditions and then, the space where we find minimizers has
dimension  $n-2$. For the Discontinuous method, since we do not
impose conditions on the boundary, and the number of nodal basis
are $2n-2$, we are minimizing in a space of this dimension.
Therefore, to make a comparison between both methods, we  compare
the discrete problem for  the DGFEM in $n-$intervals with the
CGFEM in $2n-$intervals.

We want to mention that, in order to find minimizers of both
discrete problems,  we use a BFGS Quasi-Newton method (see
\cite{Go, Sh}).

In the next two figures, we first plot the solution versus the
approximation using  the DGFEM and CGFEM for the case $n=41.$ The
second figure is the graphic of the function $p(x)$.

\centerline{\includegraphics[width=15cm]{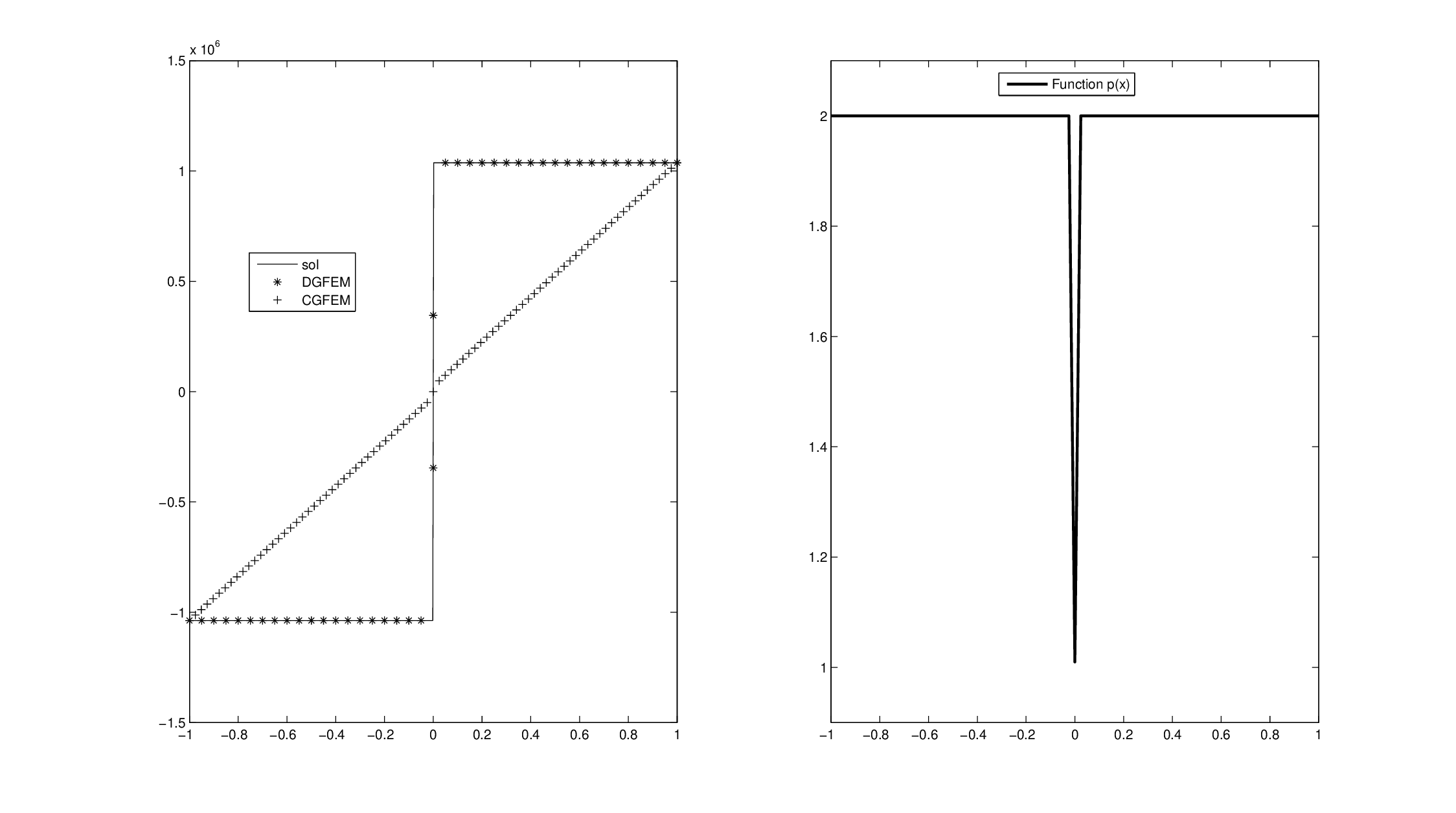}}

Note that, when we use the CGFEM the discrete solution is close to
the function $y=x$ which is a solution of \eqref{dim1} with
$p\equiv 2$, that means that this method needs a smaller step in
order to see the points where $p$ is close to one.

In the following figure we can see that, the minimizers of the
continuous methods are far from the solution even for $n=150$
(300-intervals). We need $n=200$ (400-intervals) to arrive at a
good approximation of $u$.

\centerline{\includegraphics[width=15cm]{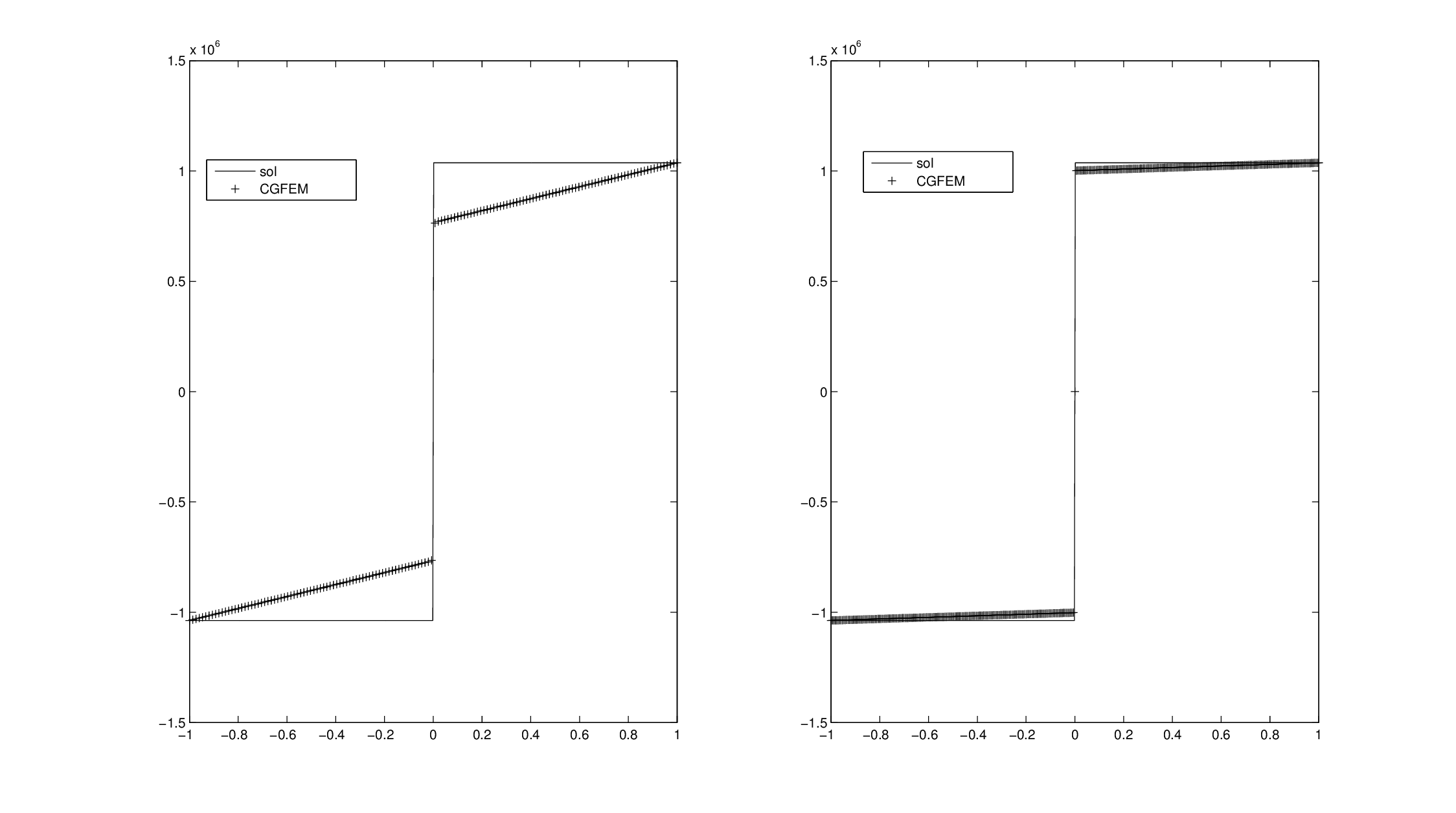}}

\begin{ack}
This work had been improve thanks to the suggestions and comments
of one of the referees. The authors want to thank  him strongly.
We also like to thank N. Wolanski  for her throughout reading of
the manuscript that help us to improve the presentation of the
paper.
\end{ack}

\def\cprime{$'$} \def\ocirc#1{\ifmmode\setbox0=\hbox{$#1$}\dimen0=\ht0
  \advance\dimen0 by1pt\rlap{\hbox to\wd0{\hss\raise\dimen0
  \hbox{\hskip.2em$\scriptscriptstyle\circ$}\hss}}#1\else {\accent"17 #1}\fi}
\providecommand{\bysame}{\leavevmode\hbox
to3em{\hrulefill}\thinspace}
\providecommand{\MR}{\relax\ifhmode\unskip\space\fi MR }
\providecommand{\MRhref}[2]{%
  \href{http://www.ams.org/mathscinet-getitem?mr=#1}{#2}
} \providecommand{\href}[2]{#2}

\bibliographystyle{amsplain}

\begin{thebibliography}{10}

\bibitem{ABCM}
D.N Arnold, F.~Brezzi, B.~Cockburn and Marini, \emph{Unified
analysis of
  discontinuous galerkin methods for elliptic problems}, SIAM J. Num. Anal, \textbf{39}
  (2002), 1749--1779.

\bibitem{BR1997} F. Bassi and S. Rebay, \emph{A High-Order Accurate Discontinuous Finite Element Method
for the Numerical Solution of the Compressible Navier–Stokes
Equations}, J. Comput. Phys. \textbf{131} (1997), 267--279.

\bibitem{BCE}
E. M. Bollt, R. Chartrand, S. Esedo{\=g}lu, P. Schultz and K. R.
  Vixie, \emph{Graduated adaptive image denoising: local compromise between
  total variation and isotropic diffusion}, Adv. Comput. Math. \textbf{31}
  (2009), no.~1-3, 61--85.

\bibitem{BR}
S. C. Brenner, \emph{Poincar\'e-{F}riedrichs inequalities for
piecewise
  {$H^1$} functions}, SIAM J. Numer. Anal. \textbf{41} (2003), no.~1, 306--324
  (electronic).

\bibitem{BMMPR2000} F. Brezzi, G. Manzini, L. D. Marini, P. Pietra and A. Russo,
\emph{Discontinuous Galerkin approximations for elliptic
problems}, Numer. Meth. Partial Diff. Eq., \textbf{16} (2000), pp.
365--378.

\bibitem{BO}
A. Buffa and Ch. Ortner, \emph{Compact embeddings of broken
  {S}obolev spaces and applications}, IMA J. Numer. Anal. \textbf{29} (2009),
  no.~4, 827--855.

\bibitem{CL}
A. Chambolle and P. L. Lions, \emph{Image recovery via total
  variation minimization and related problems}, Numer. Math. \textbf{76}
  (1997), no.~2, 167--188.

\bibitem{CLR}
Y. Chen, S. Levine and M. Rao, \emph{Variable exponent, linear
  growth functionals in image restoration}, SIAM J. Appl. Math. \textbf{66}
  (2006), no.~4, 1383--1406 (electronic).

\bibitem{Ci}
Ph. Ciarlet, \emph{The finite element method for elliptic
problems}, vol.~68,
  North-Holland, Amsterdam, 1978.

\bibitem{Di}
L. Diening, \emph{Theoretical and numerical results for
electrorheological
  fluids,}, Ph.D. thesis, University of Freiburg, Germany (2002).

\bibitem{D}
\bysame, \emph{Maximal function on generalized {L}ebesgue spaces
{${L}\sp
  {p(\cdot)}$}}, Math. Inequal. Appl. \textbf{7} (2004), no.~2, 245--253.

\bibitem{D3}
\bysame, \emph{Riesz potential and {S}obolev embeddings on
generalized
  {L}ebesgue and {S}obolev spaces {$L^{p(\cdot)}$} and {$W^{k,p(\cdot)}$}},
  Math. Nachr. \textbf{268} (2004), 31--43.

\bibitem{DHHR}
L. Diening, P. Harjulehto, P. H\"{a}st\"{o} and M. Ruzicka,
\emph{Lebesgue and
  sobolev spaces with variable exponents}, Lecture Notes in Mathematics, vol.
  2017, Springer-Verlag, New York, 2011.

\bibitem{DHN}
L. Diening, P. H{\"a}st{\"o} and A. Nekvinda, \emph{Open problems
in variable
  exponent {L}ebesgue and {S}obolev spaces}, Function Spaces, Differential
  Operators and Nonlinear Analysis, Milovy, Math. Inst. Acad. Sci. Czech
  Republic, Praha, 2005.

\bibitem{FZ}
X. Fan and Q. H. Zhang, \emph{Existence of solutions for
  {$p(x)$}-{L}aplacian {D}irichlet problem}, Nonlinear Anal. \textbf{52}
  (2003), no.~8, 1843--1852.

\bibitem{Fanx2}
X. Fan, \emph{Regularity of nonstandard {L}agrangians
{$f(x,\xi)$}},
  Nonlinear Anal. \textbf{27} (1996), no.~6, 669--678.

\bibitem{Fanx}
\bysame, \emph{Boundary trace embedding theorems for variable
exponent
  {S}obolev spaces}, J. Math. Anal. Appl. \textbf{339} (2008), no.~2,
  1395--1412.

\bibitem{FS}
X. Fan and D. Zhao, \emph{On the spaces {$L\sp {p(x)}(\Omega)$}
and
  {$W\sp {m,p(x)}(\Omega)$}}, J. Math. Anal. Appl. \textbf{263} (2001), no.~2,
  424--446.

\bibitem{Go}
D. Goldfarb, \emph{A family of variable metric updates derived by
variational
  means},  \textbf{24} (1970), 23--26.

\bibitem{HH}
P. Harjulehto and P. H{\"a}st{\"o}, \emph{A capacity approach to
the
  {P}oincar\'e inequality and {S}obolev imbeddings in variable exponent
  {S}obolev spaces}, Rev. Mat. Complut. \textbf{17} (2004), no.~1, 129--146.

\bibitem{KR}
O. Kov\'a\v{c}ik and J. R\'akosn{\'i}k, \emph{On spaces
${L}^{p(x)}$ and
  ${W}^{k,p(x)}$}, Czechoslovak Math. J \textbf{41} (1991), 592--618.

\bibitem{ROF}
L. I. {Rudin}, S. Osher  and E. Fatemi, \emph{Nonlinear total
variation
  based noise removal algorithms}, Physica D Nonlinear Phenomena \textbf{60}
  (1992), 259--268.

\bibitem{R}
M. R{\ocirc{u}}{\v{z}}i{\v{c}}ka, \emph{Electrorheological fluids:
  modeling and mathematical theory}, Lecture Notes in Mathematics, vol. 1748,
  Springer-Verlag, Berlin, 2000.

\bibitem{Sam1}
S. Samko, \emph{Denseness of {$C\sp \infty\sb 0(\bold R\sp N)$} in
the
  generalized {S}obolev spaces {$W\sp {M,P(X)}(\bold R\sp N)$}}, Direct and
  inverse problems of mathematical physics ({N}ewark, {DE}, 1997), Int. Soc.
  Anal. Appl. Comput., vol.~5, Kluwer Acad. Publ., Dordrecht, 2000,
  pp.~333--342.

\bibitem{Sh}
D. F. Shanno, \emph{Conditioning of quasi-newton methods for
function
  minimization}, Mathematics of Computing \textbf{24} (1970), 647--656.

\bibitem{Z}
V. V. Zhikov, \emph{Averaging of functionals of the calculus of
variations and
  elasticity theory}, Izv. Akad. Nauk SSSR Ser. Mat. \textbf{50} (1986), no.~4,
  675--710, 877.

\end{thebibliography}

\end{document}